\documentclass[10pt]{amsart}
\usepackage{graphicx}
\usepackage{amscd}
\usepackage{amsmath}
\usepackage{amsthm}
\usepackage{amsfonts}
\usepackage{amssymb}
\usepackage{mathrsfs}
\usepackage{enumerate}
\usepackage{amsrefs}
\usepackage[pagebackref,hypertexnames=false, colorlinks, citecolor=blue, linkcolor=red, pdfstartview=FitB]{hyperref}
\usepackage[latin1]{inputenc}

\newcommand{\D}{\operatorname{\mathbb{D}}}

\newcommand{\J}{\operatorname{\mathfrak{J}}}
\newcommand{\Z}{\operatorname{\mathbb{Z}}}

\newcommand{\C}{\operatorname{\mathbb{C}}}

\newcommand{\B}{\operatorname{\mathcal{B}}}
\newcommand{\A}{\operatorname{\mathcal{A}}}

\newcommand{\hil}{\operatorname{\mathcal{H}}}
\newcommand{\kil}{\operatorname{\mathcal{K}}}

\newcommand{\e}{\operatorname{\varepsilon}}

\newcommand{\ol}{\overline }
\let\phi\varphi

\newtheorem{lemma}{Lemma}[section]
\newtheorem{theorem}[lemma]{Theorem}
\newtheorem{proposition}[lemma]{Proposition}
\newtheorem{corollary}[lemma]{Corollary}
\theoremstyle{definition}

\begin{document}
\author{Rapha\"el Clou\^atre}
\address{Department of Mathematics, Indiana University, 831 East 3rd Street,
Bloomington, IN 47405} \email{rclouatr@indiana.edu}
\title[Unitary equivalence and similarity to Jordan models]{Unitary equivalence and similarity to Jordan models for weak contractions of class $C_0$}
%\date{\today}
\subjclass[2010]{Primary:47A45, 47L55}
%\keywords{$C_0$ operators, invariant subspaces, quasisimilarity}
\begin{abstract}
We obtain results on the unitary equivalence of weak contractions of class $C_0$ to their Jordan models under an assumption on their commutants. In particular, our work addresses the case of arbitrary finite multiplicity. The main tool is the
theory of boundary representations due to Arveson. We also
generalize and improve previously known results concerning unitary equivalence and similarity to Jordan models when the minimal function is a Blaschke product.
\end{abstract}
\maketitle

\section{Introduction}

We start with some background concerning operators of class $C_0$
(greater detail can be found in \cite{bercovici1988} or
\cite{nagy2010}). Let $H^\infty$ be the algebra of bounded
holomorphic functions on the open unit disc $\D$. Let $\hil$ be a
Hilbert space and $T$ a bounded linear operator on $\hil$, which we
indicate by $T\in B(\hil)$. If $T\in B(\hil)$ is a completely
non-unitary contraction, then its associated Sz.-Nagy--Foias
$H^\infty$ functional calculus is an algebra homomorphism $\Phi:
H^\infty \to B(\hil)$ with the following properties:
\begin{enumerate}[(i)]
    \item $\|\Phi(u)\|\leq u$ for every $u\in H^\infty$
    \item $\Phi(p)=p(T)$ for every polynomial $p$
    \item $\Phi$ is continuous when $H^\infty$ and $B(\hil)$ are equipped with their respective weak-star topologies.
\end{enumerate}
We use the notation $\Phi(u)=u(T)$ for $u\in H^\infty$. The
contraction $T$ is said to belong to the class $C_0$ whenever $\Phi$ has a
non-trivial kernel. It is known in that case that $\ker \Phi=\theta
H^\infty$ for some inner function $\theta$ called the minimal
function of $T$, which is uniquely determined up to a scalar factor
of absolute value one.

We denote by $H^2$ the Hilbert space of functions
$$f(z)=\sum_{n=0}^\infty a_n z^n$$
holomorphic on the open unit disc, equipped with the norm
$$
\|f\|_{H^2}=\left(\sum_{n=0}^\infty |a_n|^2\right)^{1/2}.
$$
For any inner function $\theta\in H^\infty$, the space
$H(\theta)=H^2\ominus \theta H^2$ is closed and invariant for $S^*$,
the adjoint of the shift operator $S$ on $H^2$. The operator
$S(\theta)$ defined by $S(\theta)^*=S^*|(H^2\ominus \theta H^2)$ is
called a Jordan block; it is of class $C_0$ with minimal function
$\theta$.

A more general family of operators consists of the so-called
Jordan operators. Start with a collection of inner functions $\Theta=\{\theta_{\alpha}\}_{\alpha}$ indexed by the ordinal numbers such that $\theta_\alpha=1$ for $\alpha$ large enough and that $\theta_{\beta}$ divides $\theta_{\alpha}$ whenever $\text{card}(\beta)\geq \text{card}(\alpha)$ (recall that a
function $u\in H^\infty$ divides another function $v\in H^\infty$ if
$v=uf$ for some $f\in H^\infty$). Let $\gamma$ be the first ordinal such that $\theta_\gamma=1$. Then, the associated Jordan operator is $J_\Theta=\bigoplus_{\alpha<\gamma} S(\theta_{\alpha})$.

The Jordan
operators are of fundamental importance in the study of operators of
class $C_0$ as the following theorem from \cite{bercovici1975}
illustrates. Recall first that a bounded injective linear operator
with dense range is called a quasiaffinity. Two operators
$T\in B(\hil)$ and $T'\in B(\hil')$ are said to be
quasisimilar if there exist quasiaffinities $X:\hil\to
\hil'$ and $Y:\hil' \to \hil$ such that $XT=T' X$ and $T Y=YT'$.

\begin{theorem}\label{t-classification}
For any operator $T$ of class $C_0$ there exists a unique Jordan operator
$J$ which is quasisimilar to $T$.
\end{theorem}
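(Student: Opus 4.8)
The plan is to prove existence and uniqueness separately --- existence by a transfinite peeling argument, uniqueness by quasisimilarity invariants. For existence I would first isolate the notion of a \emph{maximal vector}: writing $m$ for the minimal function of $T\in B(\hil)$, call $h_0\in\hil$ maximal if the inner function generating the ideal $\{u\in H^\infty: u(T)h_0=0\}$ is $m$ itself. Maximal vectors exist by a Baire category argument, since for each proper inner divisor $\psi$ of $m$ the set $\ker\psi(T)$ is a proper closed subspace and $m$ can be exhausted by least common inner multiples of such divisors. If $\hil_1=\bigvee_{u\in H^\infty}u(T)h_0$ is the cyclic subspace generated by a maximal vector, then $T|_{\hil_1}$ is cyclic of class $C_0$ with minimal function $m$, and I would show that any such operator is quasisimilar to the Jordan block $S(m)$: in one direction the assignment $u\mapsto u(T)h_0$ induces a quasiaffinity out of $H(m)$, and in the other one argues dually with $T^*$ (itself cyclic, with minimal function $\widetilde m$ and $S(\widetilde m)^{\,*}$ unitarily equivalent to $S(m)$), invoking the fact that for operators of class $C_0$ a two-sided quasiaffine-transform relation upgrades to a quasisimilarity.

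The core of the argument is a \emph{splitting step}: $T$ is quasisimilar to $S(m)\oplus T'$ for some operator $T'$ of class $C_0$ whose minimal function divides $m$. The easy half produces, from any $T$-invariant subspace $\hil'$ with $\hil_1\cap\hil'=\{0\}$ and $\hil_1+\hil'$ dense, the bounded map $(x,y)\mapsto x+y$, which is a quasiaffinity exhibiting $S(m)\oplus T|_{\hil'}\prec T$; here $T|_{\hil'}$ is automatically of class $C_0$ with minimal function dividing $m$ because $\hil'$ is invariant. The delicate half --- which I expect to be the main obstacle --- is to arrange that this relation is in fact a quasisimilarity. Since operators of class $C_0$ are typically not \emph{similar} to their Jordan models, $\hil_1$ admits no closed invariant complement and a topological direct-sum decomposition is out of reach; instead one exploits the rigidity of the commutant of $S(m)$ --- in essence an injectivity property of Jordan blocks --- together with the commutant lifting theorem, to manufacture the missing intertwiner. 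Granting this, one recurses: having built $\bigoplus_{\beta<\alpha}S(\theta_\beta)$, one applies the splitting step to the remaining summand to obtain $\theta_\alpha$, and takes closed linear spans at limit ordinals. The resulting chain $\{\theta_\alpha\}$ decreases in divisibility by construction, and since a strictly increasing chain of closed subspaces of a separable Hilbert space is necessarily countable, the recursion terminates at a countable ordinal and delivers a Jordan operator quasisimilar to $T$.

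For uniqueness, the first observation is that the minimal function is a quasisimilarity invariant: if $X$ is a quasiaffinity with $XT=T'X$, then for $u\in H^\infty$ one has $u(T)=0\iff Xu(T)=0\iff u(T')X=0\iff u(T')=0$, since $X$ is injective with dense range. Applied to a Jordan operator this recovers only its largest entry, so to pin down the whole family $\Theta$ I would bring in finer invariants $\theta_k(T)$: in the finite-multiplicity setting of the body of the paper, the greatest common inner divisors (up to unimodular constants) of the $k\times k$ minors of the characteristic function; or, when $m$ is a Blaschke product, the cardinals $\dim\ker\psi(T)$ as $\psi$ ranges over finite Blaschke products, which are visibly quasisimilarity invariants and encode the orders of vanishing of every $\theta_\alpha$. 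One then verifies that each $\theta_k$ is preserved by quasisimilarity and that $\theta_k(J_\Theta)$ returns the $k$-th member of $\Theta$, by a direct computation of $\ker\psi(S(\theta))$; together with the existence half, this forces the Jordan model of $T$ to be unique up to the stated scalar ambiguity.
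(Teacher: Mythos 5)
The first thing to say is that the paper contains no proof of this statement: Theorem \ref{t-classification} is imported wholesale from Bercovici--Foias--Sz.-Nagy \cite{bercovici1975} and used as a black box, so your proposal can only be measured against the literature. Your architecture --- maximal vectors, quasisimilarity of the cyclic part to $S(m)$, a splitting step, transfinite exhaustion, and quasisimilarity invariants for uniqueness --- is indeed the standard one. But the existence half defers precisely at the point where the theorem lives, and defers incorrectly. You present the ``easy half'' of the splitting step as: \emph{given} a $T$-invariant subspace $\hil'$ with $\hil_1\cap\hil'=\{0\}$ and $\hil_1+\hil'$ dense, the sum map exhibits $S(m)\oplus T|\hil'\prec T$. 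The existence of such a quasi-complement $\hil'$ is itself the central technical theorem (in the standard treatment it is produced inside the functional model by factoring the characteristic function through the maximal vector, and the quasisimilarity $T\sim S(m)\oplus T|\hil'$ comes packaged with that construction rather than being recovered afterwards from ``rigidity of the commutant of $S(m)$''); nothing in general position guarantees that the cyclic subspace of a maximal vector admits any invariant quasi-complement at all. Two smaller existence issues: the Baire-category argument for maximal vectors does not close as stated, because the non-maximal vectors form the union of $\ker\psi(T)$ over \emph{all} proper inner divisors $\psi$ of $m$, an uncountable family --- one needs the lemma that $m_{h_1+ch_2}=m_{h_1}\vee m_{h_2}$ for all but countably many scalars $c$, applied along a countable dense set --- and the boundedness of the map $u\mapsto u(T)h_0$ out of $H(m)$ is not automatic (it is usually obtained through the isometric dilation, much as in the construction of $\Lambda$ in the proof of Theorem \ref{t-maxue}).

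The uniqueness half has a genuine gap in generality. The invariants you propose --- greatest common inner divisors of $k\times k$ minors of the characteristic function, or the cardinals $\dim\ker\psi(T)$ for Blaschke divisors $\psi$ --- do determine the model when the defect indices are finite, respectively when $\theta$ is a Blaschke product, but the theorem is stated for arbitrary $C_0$ operators: when the minimal function has a nontrivial singular part the kernels of Blaschke divisors see nothing, and when the multiplicity is infinite the minor calculus is unavailable. The standard uniqueness argument instead uses that for quasisimilar $T\sim T'$ and any inner $\phi$ the restrictions $T|\ol{\ran\phi(T)}$ and $T'|\ol{\ran\phi(T')}$ are again quasisimilar, combined with the quasisimilarity invariance of the multiplicity $\mu_T$ --- a fact that is \emph{false} for general operators and is itself a nontrivial theorem for the class $C_0$. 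Finally, your termination argument assumes separability, whereas the paper's definition of a Jordan operator indexes the blocks by arbitrary ordinals; the separable case is the one treated in \cite{bercovici1975}, so this is a reasonable restriction, but it should be stated.
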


This theorem is one of the main features of the class
$C_0$. Recent investigations have identified special situations in
which the relation of quasisimilarity between a multiplicity-free
operator $T$ of class $C_0$ and its Jordan model can be improved to
similarity. For instance, the work done in \cite{clouatre2011} was
inspired in part by the early results of Apostol found in
\cite{apostol1976} (discovered independently in
\cite{lwilliams1976}). A link was found between the possibility
of achieving similarity between $T$ and $S(\theta)$ and the fact that
$\phi(T)$ has closed range for every inner divisor $\phi$ of
$\theta$ (here $\theta$ denotes the minimal function of $T$). The
same problem was studied in \cite{clouatre2013}, albeit from another
point of view. Drawing inspiration from the seminal work of Arveson
\cite{arveson1969}, the main question addressed in that paper was
whether similarity between $T$ and $S(\theta)$ could be detected via
properties of the associated algebras
$$
H^{\infty}(T)=\{ u(T):u\in H^\infty\}
$$
and
$H^{\infty}(S(\theta))$. More precisely, assuming that these
algebras are boundedly isomorphic, does it follow that $T$ and
$S(\theta)$ are similar? Partial results along with estimates on the
size of the similarity were obtained in \cite{clouatre2013} in the
case where the minimal function is a finite Blaschke product. In both
\cite{clouatre2011} and \cite{clouatre2013} the
considerations also took advantage of (and perhaps reinforced) a
well-known connection with the theory of interpolation by bounded
holomorphic functions on the unit disc and the so-called
(generalized) Carleson condition (see \cite{vasyunin1976} or
\cite{nikolskii1979}).

Our work here offers several improvements and generalizations of
various results from \cite{arveson1969},\cite{arveson1972},
\cite{clouatre2011} and \cite{clouatre2013}. As mentioned above, our
focus is to describe a relation between an operator
of class $C_0$ and its Jordan model, and we do so in
two different settings: up to similarity and up to unitary
equivalence. We now present the plan of the paper, state our main
results and explain to what extent those improve upon previous ones.

Section 2 is based on the following result, which is a consequence
of the proof of Theorem 3.6.12 in \cite{arveson1969} and of
Corollary 1 in \cite{arveson1972}, both due to Arveson. Recall that
a vector $x\in \hil$ is said to be cyclic for $T\in B(\hil)$ if the
smallest closed subspace of $\hil$ containing $T^n x$ for every
integer $n\geq 0$ is the entire space $\hil$. An operator having a
cyclic vector is said to be multiplicity-free. We denote by $P(T)$
the smallest norm-closed algebra containing $T$ and the identity
operator.

\begin{theorem}\label{t-bdryarveson}
Let $T\in B(\hil)$ be an irreducible multiplicity-free operator of
class $C_0$ with minimal function $\theta$ and with the
property that its spectrum does not contain the unit circle. Consider the homomorphism
$$
\Psi:P(S(\theta))\to P(T)
$$
defined by $\Psi(p(S(\theta)))=p(T)$ for every polynomial $p$. Assume that $\Psi$ is completely isometric. Then, $T$ is
unitarily equivalent to $S(\theta)$.
\end{theorem}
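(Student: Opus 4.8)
The plan is to invoke Arveson's boundary representation machinery. The key observation is that the algebra $P(S(\theta))$ generated by a Jordan block is a concrete operator algebra, and the hypothesis that $\Psi$ is completely isometric says precisely that $P(T)$ is a completely isometric copy of it. The strategy is to show that the identity representation of $P(S(\theta))$ is a boundary representation for a suitable generating set, and then use the uniqueness (up to unitary equivalence) of boundary representations, together with the completely isometric isomorphism $\Psi$, to transfer this back to $T$.

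More concretely, first I would record the relevant structural facts about $S(\theta)$ and $P(S(\theta))$. Since $\theta$ is an inner function and the spectrum of $S(\theta)$ does not contain the whole unit circle, the operator $S(\theta)$ and its defect operators generate enough structure that the $C^*$-algebra $C^*(S(\theta))$ can be analyzed: one uses the theory of model operators to identify $C^*(S(\theta))$ and, crucially, to show that $S(\theta)$ is irreducible as well (this is where the hypothesis on $T$ being irreducible enters, matched on the model side). Then I would appeal to Theorem 3.6.12 of \cite{arveson1969} in the form quoted by the author: under the stated spectral hypothesis, the identity map on $P(S(\theta))$ \emph{is} a boundary representation for $C^*(S(\theta))$. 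The content of Corollary 1 in \cite{arveson1972} is that boundary representations, when they exist, are the unique (up to unitary equivalence) completely isometric representations whose restriction to the generating algebra has that boundary property.

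Now I would run the transfer. The completely isometric homomorphism $\Psi: P(S(\theta)) \to P(T)$ extends, by Arveson's extension theorem (or Stinespring), to a completely positive map on $C^*(S(\theta))$; conversely the completely isometric inverse $\Psi^{-1}: P(T) \to P(S(\theta))$ extends similarly. Composing, one gets that the two $C^*$-algebras $C^*(S(\theta))$ and $C^*(T)$ are linked by a pair of unital completely positive maps that are inverse to each other on the respective generating subalgebras. The boundary representation property, being a rigidity statement, forces these to actually come from a $*$-isomorphism $C^*(S(\theta)) \to C^*(T)$ carrying $S(\theta)$ to $T$. Since both $S(\theta)$ and $T$ are irreducible, such a $*$-isomorphism is spatially implemented, i.e.\ given by a unitary $U: H(\theta) \to \hil$ with $U S(\theta) U^* = T$. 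This is the desired unitary equivalence.

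The main obstacle, I expect, is verifying the hypotheses of Arveson's boundary representation theorem for $S(\theta)$ precisely---that is, checking that the identity representation of $P(S(\theta))$ really has the boundary property under the assumption that $\sigma(S(\theta))$ (equivalently $\sigma(T)$, since $\theta$ is the common minimal function) omits part of the unit circle. This requires understanding $C^*(S(\theta))$ concretely: when $\theta$ is, say, a Blaschke product the analysis reduces to finite-dimensional or compact perturbations, but for a general inner function one must control the essential spectrum and the structure of the quotient $C^*(S(\theta))/\mathcal{K}$. A secondary technical point is ensuring that the completely positive extensions can be chosen unital and compatible, so that the rigidity of boundary representations applies cleanly; this is standard once the boundary property is in hand, but the bookkeeping with the two directions of $\Psi$ needs care.
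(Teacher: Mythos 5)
Your overall architecture --- show that an identity representation is a boundary representation, then use the completely isometric map $\Psi$ to transfer a $*$-representation from one $C^*$-algebra to the other --- is exactly the intended route: the paper only quotes this theorem from Arveson, but its Section~2 runs precisely this scheme for the commutant version (Theorem \ref{t-bdryue}), via Theorem \ref{t-pullbackisomet}. The genuine gap is your final step. First, the transfer produces a surjective $*$-homomorphism carrying one generator to the other, not a $*$-isomorphism (a boundary representation may well have a kernel); second, and more seriously, the principle ``a $*$-isomorphism between irreducible concrete $C^*$-algebras carrying generator to generator is spatially implemented'' is false in general (any simple $C^*$-algebra with two inequivalent irreducible representations gives a counterexample). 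What actually closes the argument is the representation theory of $C^*$-algebras containing the compacts: one must first prove that $C^*(T)$ (or $C^*(S(\theta))$) contains $\kil$ --- this uses irreducibility together with the compactness and nontriviality of $I-T^*T$ and $I-TT^*$, which here come from quasisimilarity of $T$ to $S(\theta)$ and Theorem \ref{t-compactdefect} --- and then decompose the transferred representation into a multiple of the identity representation plus a piece factoring through the Calkin algebra, as in Lemma \ref{l-repCT}. This yields $T$ unitarily equivalent to $(S(\theta)\otimes I_{\hil'})\oplus U$ with $U$ unitary, and only at this point do the hypotheses ``class $C_0$'' (complete non-unitarity kills $U$) and ``multiplicity-free'' (uniqueness of the Jordan model, or Lemma \ref{l-weakdirectsum}, kills $\hil'$) enter. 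Your proof never invokes these hypotheses, which is the symptom: the unitary summand and the multiplicity are exactly what it fails to exclude.

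Two smaller points. Deferring the boundary-representation property itself to Arveson's Theorem 3.6.12 is fair, since the paper does the same; but note that the spectral hypothesis is what makes the Calkin quotient fail to be completely isometric on the norm-closed polynomial algebra $P(T)$ --- the hypothesis of Arveson's boundary theorem (Theorem \ref{t-bdryrep}) that is \emph{not} automatic for $P(T)$, even though it is automatic for the full commutant by Theorem \ref{t-compcomm}; this is why the spectral condition can be dropped in the paper's Theorem \ref{t-intromain1} but not here. Also, irreducibility of $T$ is a hypothesis used for $T$ itself (either in the boundary theorem for $P(T)$ or to get $\kil(\hil)\subset C^*(T)$); it is not needed to ``match'' anything on the model side, since Jordan blocks are irreducible for free.
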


Our first main result stated below addresses the case of
higher multiplicities and removes the condition on the spectrum of
$T$. We denote by $\{T\}'$ the commutant of the operator $T$.

\begin{theorem}\label{t-intromain1}
Let $T_1\in B(\hil_1)$ be an operator of class $C_0$ with the
property that $I-T_1^*T_1$ is of trace class and that $\{T_1\}'$ is
irreducible. Let $T_2\in B(\hil_2)$ be another operator of class
$C_0$ which is quasisimilar to $T_1$  and with the property that
$\{T_2\}'$ is irreducible. Assume that there exists a completely
isometric isomorphism $$\phi:\{T_1\}'\to \{T_2\}'$$ such that
$\phi(T_1)=T_2$. Then, $T_1$ and $T_2$ are unitarily equivalent.
\end{theorem}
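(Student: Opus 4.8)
The strategy combines Arveson's theory of boundary representations with the special structure of weak contractions; throughout write $\A_1=C^*(\{T_1\}')\subseteq B(\hil_1)$ and $\A_2=C^*(\{T_2\}')\subseteq B(\hil_2)$. Recall first that by Theorem~\ref{t-classification} the quasisimilar operators $T_1,T_2$ of class $C_0$ have a common Jordan model. Next, since $T_1\in\{T_1\}'$ we have $T_1^*\in\A_1$, so $I-T_1^*T_1\in\A_1$; as $T_1$ is of class $C_0$ it is completely non-isometric, hence $I-T_1^*T_1$ is a nonzero positive operator, and by hypothesis it is of trace class, so compact. The assumption that $\{T_1\}'$ is irreducible means precisely that $\A_1$ acts irreducibly on $\hil_1$, i.e.\ $\A_1'=\C I$; and an irreducible $C^*$-algebra containing a nonzero compact operator contains the whole ideal of compacts. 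Thus $\K(\hil_1)\subseteq\A_1\subseteq B(\hil_1)$, and $\K(\hil_1)$ is then the unique minimal nonzero closed two-sided ideal of $\A_1$. The same discussion of course shows that $\{T_2\}'$ irreducible makes $\A_2$ act irreducibly on $\hil_2$.

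The crux is to show that the identity representation $\iota_1\colon\A_1\hookrightarrow B(\hil_1)$ is a boundary representation for the (unital, weak-$*$ closed, in general non-self-adjoint) operator algebra $\{T_1\}'$. Since $\A_1$ contains the compacts as its unique minimal ideal, Arveson's analysis of boundary representations in the presence of compact operators reduces this to verifying that the quotient homomorphism $q\colon\A_1\to\A_1/\K(\hil_1)$ does \emph{not} restrict to a complete isometry on $\{T_1\}'$; equivalently, that on some matrix level over $\{T_1\}'$ the norm strictly exceeds the corresponding essential norm. This is the step where the trace-class hypothesis on $I-T_1^*T_1$ enters decisively, and I expect it to be the main obstacle. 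The point to exploit is that, $I-T_1^*T_1$ being of trace class, the commutant $\{T_1\}'$ together with its ambient $C^*$-algebra contains enough compact operators — produced from the defect of $T_1$ through the $H^\infty$ functional calculus — that they are detected by the operator-space structure of $\{T_1\}'$ but are annihilated by $q$. Carrying this out uniformly, for arbitrary finite multiplicity and with no restriction on the spectrum of $T_1$, is exactly what replaces and dispenses with the spectral condition present in Theorem~\ref{t-bdryarveson}.

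Granting this, the conclusion follows from a rigidity argument. Being an algebra isomorphism, $\phi$ is unital; composing it with the inclusion $\{T_2\}'\hookrightarrow\A_2$ and invoking Arveson's extension theorem, we obtain a unital completely positive map $\Phi\colon\A_1\to\A_2$ with $\Phi|_{\{T_1\}'}=\phi$, and likewise $\phi^{-1}$ extends to a unital completely positive map $\Psi\colon\A_2\to\A_1$ with $\Psi|_{\{T_2\}'}=\phi^{-1}$. The composition $\Psi\circ\Phi\colon\A_1\to B(\hil_1)$ is unital completely positive and agrees with $\iota_1$ on $\{T_1\}'$; since $\iota_1$ is a boundary representation, the unique-extension property forces $\Psi\circ\Phi=\iota_1$, i.e.\ $\Psi\circ\Phi=\id_{\A_1}$. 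Hence $\Phi$ is an injective, unital, completely positive map with completely positive left inverse $\Psi$, so it is a unital complete order embedding of the $C^*$-algebra $\A_1$; by the Choi--Effros theorem $\Phi$ is therefore a $*$-isomorphism onto its range, and, $\Phi$ being multiplicative, that range is $C^*(\Phi(\{T_1\}'))=C^*(\phi(\{T_1\}'))=C^*(\{T_2\}')=\A_2$.

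It remains to realize $\Phi$ spatially. The set $\Phi(\K(\hil_1))$ is a nonzero closed ideal of $\A_2$ which, via the $*$-isomorphism $\Phi$, is $*$-isomorphic to $\K(\hil_1)$; being an ideal of the irreducible $C^*$-algebra $\A_2$ it acts irreducibly and non-degenerately on $\hil_2$, and an irreducible representation of an elementary $C^*$-algebra is a single copy of the identity representation, so $\Phi(\K(\hil_1))=\K(\hil_2)$. Thus $\Phi$ restricts to a $*$-isomorphism $\K(\hil_1)\to\K(\hil_2)$, which is implemented by a unitary $U\colon\hil_1\to\hil_2$, i.e.\ $\Phi(K)=UKU^*$ for $K\in\K(\hil_1)$. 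For $A\in\A_1$ and $K\in\K(\hil_1)$ we have $AK\in\K(\hil_1)$, so $\Phi(A)\,UKU^*=\Phi(A)\Phi(K)=\Phi(AK)=U(AK)U^*=(UAU^*)(UKU^*)$; since the compacts act non-degenerately on $\hil_2$ this forces $\Phi(A)=UAU^*$ for every $A\in\A_1$, that is, $\Phi=\operatorname{Ad}_U$. In particular $T_2=\phi(T_1)=\Phi(T_1)=UT_1U^*$, so $T_1$ and $T_2$ are unitarily equivalent, as claimed.
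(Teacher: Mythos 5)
Your overall strategy (boundary representations plus the presence of the compacts in $C^*(\{T_1\}')$) is the right one, but there are two genuine gaps and one unused hypothesis that signals a third problem. First, the step you yourself flag as ``the main obstacle'' --- showing that the Calkin quotient $q$ fails to be completely isometric on $\{T_1\}'$ so that Arveson's boundary theorem applies --- is never carried out; gesturing at compacts ``produced from the defect through the $H^\infty$ functional calculus'' is not a proof. The paper resolves this by citing a theorem of Muhly and Moore (Theorem~\ref{t-compcomm}): for a $C_0$ operator with \emph{both} defect operators compact there is $u\in H^\infty$ with $u(T)$ a nonzero compact; since $u(T)\in\{T\}'$ and $q(u(T))=0$, the quotient is not even isometric on the commutant. (Note you also need $I-T_1T_1^*$ compact, which comes from Theorem~\ref{t-compactdefect}(i), not from the stated hypothesis alone.)

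Second, the rigidity step is wrong as written: a u.c.p.\ map $\Phi$ admitting a u.c.p.\ left inverse $\Psi$ need \emph{not} be multiplicative for the ambient operator product. For example, $x\mapsto x\oplus\omega(x)I$ on $M_2$ (with $\omega$ a state) is u.c.p.\ with the compression onto the first summand as a u.c.p.\ left inverse, yet it is not a homomorphism; the Choi--Effros theorem endows the range of the idempotent $\Phi\circ\Psi$ with a $C^*$-product that in general differs from the one inherited from $B(\hil_2)$, so you cannot conclude that $\Phi(\A_1)$ is a $C^*$-subalgebra, let alone that $\Phi(\A_1)=\A_2$. The paper avoids this by invoking Arveson's implementation theorem (Theorem~\ref{t-pullbackisomet}) with the boundary representation taken on the $\{T_2\}'$ \emph{side}, producing an honest $*$-representation $\pi$ of $C^*(\{T_1\}')$ with $\pi|\{T_1\}'=\phi$. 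This is also where your unused hypotheses must enter: to get the boundary property for $\{T_2\}'$ one needs $T_2$ to have compact defects, which follows from quasisimilarity via Theorem~\ref{t-compactdefect}(ii); and even with $\pi$ in hand, the representation theory of algebras containing the compacts only yields $T_2\cong(T_1\otimes I_{\hil'})\oplus U$ (Lemma~\ref{l-repCT}(ii)). One then needs $T_2$ of class $C_0$ to kill the unitary summand and the determinant-function argument for quasisimilar weak contractions (Lemma~\ref{l-weakdirectsum}) to force $\dim\hil'=1$. Your proposal, which never uses quasisimilarity or the trace-class assumption in earnest, cannot be complete, since it is precisely these hypotheses that rule out higher multiplicity.
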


In Section 3, we explore the case where the minimal function is a Blaschke product and show that in this setting, unitary equivalence between a multiplicity-free operator of class $C_0$ and its Jordan model can be obtained from assumptions weaker than those appearing in the statement above. Throughout, we use
the notation
$$
b_{\lambda}(z)=\frac{z-\lambda}{1-\overline{\lambda}z}
$$
for the Blaschke factor with root at $\lambda\in \D$ and
$$
\tilde{b}_{\lambda}(z)=-\frac{\ol{\lambda}}{|\lambda|}b_{\lambda}(z)=-\frac{\ol{\lambda}}{|\lambda|}\frac{z-\lambda}{1-\overline{\lambda}z}.
$$
Given a Blaschke product $\theta$, an inner divisor $\psi$ of
$\theta$ is said to be \textit{big} if the ratio $\theta/\psi$
is a Blaschke factor. Also, a multiplicity-free contraction of class $C_0$ whose
minimal function is a Blaschke product is said to be
\textit{maximal} if there exists a big divisor $\psi$ of $\theta$
and a unit cyclic vector $\xi$ with the property that
$\|\psi(T)\xi\|=1$. The motivation for Section 3 is the following result due to
Arveson (see Lemma 3.2.6 in \cite{arveson1969}).

\begin{theorem}\label{t-maxarveson}
Let $T\in B(\hil)$ be a multiplicity-free operator of class $C_0$
whose minimal function is a finite Blaschke product $\theta$. Assume that $T$ is maximal.
Then, $T$ is unitarily equivalent to $S(\theta)$.
\end{theorem}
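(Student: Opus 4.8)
My plan is to prove, by induction on the number $n$ of Blaschke factors of $\theta$ (with multiplicity), the following slightly stronger statement: \emph{if $T$ is maximal, witnessed by a big divisor $\psi$ with $\theta=\psi b_{\mu}$ and a unit cyclic vector $\xi$ with $\|\psi(T)\xi\|=1$, then there is a unitary $U\colon\hil\to H(\theta)$ with $UTU^{-1}=S(\theta)$ and $U\xi=k_{\mu}/\|k_{\mu}\|$, where $k_{\mu}(z)=(1-\ol{\mu}z)^{-1}$.} (Since $\mu$ is a zero of $\theta$, $k_{\mu}$ is the reproducing kernel of $H(\theta)$ at $\mu$.) First I would observe that $\hil$ is finite dimensional of dimension $n$: by Theorem~\ref{t-classification}, $T$ is quasisimilar to its Jordan model $S(\theta)$, which acts on the $n$-dimensional space $H(\theta)$, and a quasiaffinity into a finite dimensional space is invertible; thus $T$ is even similar to $S(\theta)$, $\sigma(T)$ equals the zero set of $\theta$, and, $T$ being cyclic, every eigenvalue of $T$ is geometrically simple. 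The base case $n=1$ is trivial: then $\theta=b_{\mu}$, $\dim\hil=1$, $T=\mu$, and $S(\theta)$ is multiplication by $\mu$ on $H(b_{\mu})=\C k_{\mu}$.

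The crux is the geometric content of maximality. From $b_{\mu}(T)\psi(T)=\theta(T)=0$ we get $\ran\psi(T)\subseteq\ker b_{\mu}(T)=\ker(T-\mu)$, which is one dimensional; as $\psi(T)\xi\neq 0$, this forces $\ran\psi(T)=\ker(T-\mu)$ and $\psi(T)$ to have rank one. Writing $v=\psi(T)\xi$ and using that $\|\psi(T)\|\le 1$ (because $\psi$ is inner) while $\|\psi(T)\xi\|=\|\xi\|=1$, an elementary argument about rank one contractions attaining their norm shows $\psi(T)=v\otimes\xi$, where $(x\otimes y)z=\langle z,y\rangle x$. Hence $\ker\psi(T)=\{\xi\}^{\perp}$; and since $\psi(T)b_{\mu}(T)=0$, while $T-\mu$ and $b_{\mu}(T)$ have the same range (they differ by the invertible factor $1-\ol{\mu}T$), we get $\ran(T-\mu)\subseteq\{\xi\}^{\perp}$, whence $\ran(T-\mu)=\{\xi\}^{\perp}$ by a dimension count, i.e.\ $T^{*}\xi=\ol{\mu}\,\xi$. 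This is the decisive point: the \emph{cyclic} vector $\xi$ is simultaneously an eigenvector of $T^{*}$, exactly as the reproducing kernel $k_{\mu}$ is an eigenvector of $S(\theta)^{*}$; in particular $\langle u(T)\xi,\xi\rangle=u(\mu)$ for every $u\in H^{\infty}$.

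For the inductive step I would fix a zero $\nu\in\D$ of $\psi$ (which exists since $\deg\psi=n-1\ge 1$) and restrict $T$ to the invariant subspace $\ran b_{\nu}(T)$, of dimension $n-1$. The operator $A:=T|_{\ran b_{\nu}(T)}$ is of class $C_{0}$ with minimal function $\theta/b_{\nu}$, a Blaschke product of degree $n-1$, and has $b_{\nu}(T)\xi$ as a cyclic vector; crucially it is again maximal. Indeed, factoring $\psi=(\psi/b_{\nu})\,b_{\nu}$, the estimates $1=\|\psi(T)\xi\|\le\|(\psi/b_{\nu})(T)\,b_{\nu}(T)\xi\|\le\|b_{\nu}(T)\xi\|\le\|\xi\|=1$ must all be equalities, so $\|b_{\nu}(T)\xi\|=1$ and $(\psi/b_{\nu})(T)$ is an isometry on $b_{\nu}(T)\xi$; since $\psi/b_{\nu}$ is a big divisor of $\theta/b_{\nu}$ with $(\theta/b_{\nu})/(\psi/b_{\nu})=b_{\mu}$, this exhibits $A$ as maximal with the \emph{same} removed zero $\mu$. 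By the inductive hypothesis there is a unitary $U_{0}\colon\ran b_{\nu}(T)\to H(\theta/b_{\nu})$ with $U_{0}AU_{0}^{-1}=S(\theta/b_{\nu})$ and $U_{0}(b_{\nu}(T)\xi)=k_{\mu}/\|k_{\mu}\|$ (still the normalized reproducing kernel at $\mu$, as $(\theta/b_{\nu})(\mu)=0$). Composing $U_{0}$ with multiplication by $b_{\nu}$, which is a unitary of $H(\theta/b_{\nu})$ onto $b_{\nu}H(\theta/b_{\nu})=\ran b_{\nu}(S(\theta))$ intertwining $S(\theta/b_{\nu})$ with $S(\theta)|_{\ran b_{\nu}(S(\theta))}$, yields a unitary $V\colon\ran b_{\nu}(T)\to\ran b_{\nu}(S(\theta))$ intertwining the corresponding restrictions of $T$ and $S(\theta)$ and sending $b_{\nu}(T)\xi$ to $b_{\nu}(S(\theta))(k_{\mu}/\|k_{\mu}\|)$. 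Now I would define $\widetilde{V}\colon\hil\to H(\theta)$ by $\widetilde{V}\,u(T)\xi=u(S(\theta))(k_{\mu}/\|k_{\mu}\|)$; this is well defined (if $u(T)\xi=u'(T)\xi$ then $(u-u')(T)$ kills the cyclic vector $\xi$, hence vanishes, so $\theta\mid u-u'$) and intertwines $T$ with $S(\theta)$. On $\ran b_{\nu}(T)=\{(ub_{\nu})(T)\xi:u\in H^{\infty}\}$ it agrees with $V$, both being intertwiners that coincide on the cyclic vector $b_{\nu}(T)\xi$ of $A$, so it is isometric there. Since $\hil=\ran b_{\nu}(T)\oplus\C\xi$ is a genuine direct sum (a cyclic vector cannot lie in the proper invariant subspace $\ran b_{\nu}(T)$), it remains only to check the cross terms, and for $h=u(T)\xi$ one has $\langle\widetilde{V}h,\widetilde{V}\xi\rangle=\langle u(S(\theta))(k_{\mu}/\|k_{\mu}\|),k_{\mu}/\|k_{\mu}\|\rangle=u(\mu)=\langle u(T)\xi,\xi\rangle=\langle h,\xi\rangle$, using $S(\theta)^{*}k_{\mu}=\ol{\mu}k_{\mu}$ on one side and $T^{*}\xi=\ol{\mu}\xi$ on the other. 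Thus $\widetilde{V}$ is an isometry; it is onto because $k_{\mu}$ is zero free on $\ol{\D}$, hence has no nonconstant inner divisor, so $k_{\mu}\notin b_{\nu}H^{2}\supseteq\ran b_{\nu}(S(\theta))$ and the range of $\widetilde{V}$ strictly contains the hyperplane $\ran b_{\nu}(S(\theta))$. An isometry onto a space of the same finite dimension is unitary, so $\widetilde{V}$ is a unitary with $\widetilde{V}\xi=k_{\mu}/\|k_{\mu}\|$ and $\widetilde{V}T\widetilde{V}^{-1}=S(\theta)$, which completes the induction.

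The step I expect to require the most care, and the main obstacle to a more naive attack, is the propagation of maximality down the induction. One must pass to precisely the right invariant subspace, namely $\ran b_{\nu}(T)$ for a zero $\nu$ of the big divisor $\psi$ (rather than, say, $\ker\psi(T)$), so that the isometry property of $\psi(T)$ at $\xi$ turns into the isometry property of $(\psi/b_{\nu})(T)$ at the smaller cyclic vector $b_{\nu}(T)\xi$; and one must strengthen the inductive statement so as to track the cyclic vector, for otherwise the unitary produced on the smaller space cannot be glued to the assignment $\xi\mapsto k_{\mu}/\|k_{\mu}\|$. The remaining ingredients are standard facts about the model operator: $\ran b_{\nu}(S(\theta))=b_{\nu}H(\theta/b_{\nu})$, the unitarity of multiplication by $b_{\nu}$ from $H(\theta/b_{\nu})$ onto that subspace, and $S(\theta)^{*}k_{\mu}=\ol{\mu}k_{\mu}$.
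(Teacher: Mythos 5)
Your proof is correct, but it takes a genuinely different route from the paper's. The paper deduces Theorem \ref{t-maxarveson} from the more general Theorem \ref{t-maxue} (following Arveson's Lemma 3.2.6): it passes to the minimal unitary dilation $U$ of $T$, uses the norm equalities $\|\phi(T)\xi\|=1=\|\phi(U)\xi\|$ to conclude $\phi(T)\xi=\phi(U)\xi$ for every inner divisor $\phi$ of $\psi$, and then builds an explicit unitary $\Lambda$ on the $U$-cyclic subspace generated by $\xi$ sending $b_{\lambda}^{n}(U)\xi$ to $b_{\lambda}^{n}k_{\lambda}$, with Lemma \ref{l-basis} identifying $\Lambda\hil=H(\theta)$. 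You instead stay entirely inside the $n$-dimensional space $\hil$ and induct on $\deg\theta$: your key structural observation is that maximality forces $\psi(T)=v\otimes\xi$, hence $T^{*}\xi=\ol{\mu}\xi$, so the cyclic vector plays exactly the role of the reproducing kernel $k_{\mu}$; maximality then propagates to $T|\ran b_{\nu}(T)$ for a zero $\nu$ of $\psi$, and the strengthened inductive statement (tracking $\xi\mapsto k_{\mu}/\|k_{\mu}\|$) lets you glue via the cross-term identity $\langle u(T)\xi,\xi\rangle=u(\mu)=\langle u(S(\theta))k_{\mu},k_{\mu}\rangle/\|k_{\mu}\|^{2}$. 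I checked the delicate points — the rank-one/norm-attainment argument, the dimension counts, the verification that $\psi/b_{\nu}$ witnesses maximality of the restriction with the same removed zero $\mu$, and the isometry of $\widetilde{V}$ on the non-orthogonal decomposition $\hil=\ran b_{\nu}(T)\oplus\C\xi$ — and they all hold. What your approach buys is elementarity: no dilation theory, no $L^{2}$ model, and a transparent explanation of \emph{why} the unitary exists (the cyclic vector is simultaneously a $T^{*}$-eigenvector). What it gives up is generality: the induction and the dimension counts are intrinsically finite-dimensional, so unlike the paper's dilation argument it does not extend to infinite Blaschke products, which is precisely the improvement Theorem \ref{t-intromain2} is after.
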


The following is our second main result. One improvement that it offers over the previous theorem is the possibility for $\theta$ to be an infinite Blaschke product.

\begin{theorem}\label{t-intromain2}
Let $T\in B(\hil)$ be a multiplicity-free operator of class $C_0$
whose minimal function is a Blaschke product
$\theta$. Assume that
$\|\psi(T)\|=1$ for some big inner divisor $\psi$ of $\theta$. Then, $T$ is unitarily equivalent
to $S(\theta)$.
\end{theorem}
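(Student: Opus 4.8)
The plan is to reduce the assertion to the single statement that $I-T^{*}T$ has rank one. For a $C_{0}$ operator with minimal function $\theta$ the characteristic function is two-sided inner, so its two defect spaces have the same (pointwise, hence global) dimension; when $\operatorname{rank}(I-T^{*}T)=1$ this function is a scalar inner function, which must be $\theta$, and therefore $\operatorname{rank}(I-T^{*}T)=1$ is equivalent to $T\cong S(\theta)$. By Theorem \ref{t-classification} and multiplicity-freeness, $T$ is quasisimilar to $S(\theta)$; fix intertwining quasiaffinities. Writing $\theta=\psi b_{\lambda}$, a computation in the model gives $\ker\psi(S(\theta))=b_{\lambda}H(\psi)=\{f\in H(\theta):f(\lambda)=0\}$, which has codimension one in $H(\theta)$, so $\psi(S(\theta))$ has rank one. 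Since the $H^{\infty}$ functional calculus commutes with the intertwiners, $\psi(T)$ has rank at most one, and it is nonzero because $\psi$ divides $\theta$ properly; hence $\psi(T)x=\langle x,v\rangle w$ for suitable $v,w$. The hypothesis $\|\psi(T)\|=1$ lets one normalize so that $\|v\|=\|w\|=1$, that is, $\psi(T)$ is a rank-one partial isometry.

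From $\psi(T)b_{\lambda}(T)=b_{\lambda}(T)\psi(T)=\theta(T)=0$ I would extract the structure: $b_{\lambda}(T)w=0$, hence $Tw=\lambda w$; $b_{\lambda}(T)^{*}v=0$, hence $T^{*}v=\bar\lambda v$; and, since $T$ (hence $T^{*}$) is multiplicity-free so that all eigenspaces of $T$ and $T^{*}$ are one-dimensional, $\ker b_{\lambda}(T)=\ker(T-\lambda I)=\mathbb{C}w$ and $\overline{\operatorname{ran}}\,b_{\lambda}(T)=\{v\}^{\perp}=\ker\psi(T)$. The identity $(I-\lambda T^{*})(I-b_{\lambda}(T)^{*}b_{\lambda}(T))(I-\bar\lambda T)=(1-|\lambda|^{2})(I-T^{*}T)$, in which the outer factors are invertible and mutually adjoint, then shows $\operatorname{rank}(I-T^{*}T)=\operatorname{rank}(I-b_{\lambda}(T)^{*}b_{\lambda}(T))$; and since $b_{\lambda}(T)w=0$ with $\ker b_{\lambda}(T)=\mathbb{C}w$, the positive operator $I-b_{\lambda}(T)^{*}b_{\lambda}(T)$ has $w$ as an eigenvector of eigenvalue $1$, so this rank is one precisely when $b_{\lambda}(T)$ is a partial isometry. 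Thus the theorem comes down to transferring the partial-isometry property from $\psi(T)$ to $b_{\lambda}(T)$ across the complementary factorization $\theta=\psi b_{\lambda}$.

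To carry this out I would show that the co-vector $v$ of $\psi(T)$ is a cyclic vector for $T$: if its local minimal function $\theta_{v}$ were a proper divisor of $\theta$, the relations $\psi(T)v=w\neq0$ and $Tw=\lambda w$, combined with the factorization $\theta=\psi b_{\lambda}$ and the strict inequality $|\psi(\lambda)|<1$ (valid unless $\theta=b_{\lambda}$, in which case $\hil$ is one-dimensional and there is nothing to prove), pin down the shape of $\theta_{v}$ and lead to a contradiction. Granting that $v$ is cyclic, it is a unit cyclic vector with $\|\psi(T)v\|=\|w\|=1$, so $T$ is maximal in the sense of Section 3, and Theorem \ref{t-maxarveson} gives $T\cong S(\theta)$ when $\theta$ is a finite Blaschke product. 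For an infinite Blaschke product $\theta=\prod_{n}b_{\mu_{n}}$ I would first extend \ref{t-maxarveson}: restricting $T$ to the hyperinvariant subspaces $\ker\theta_{n}(T)$, where $\theta_{n}$ is the $n$-th partial product, produces maximal $C_{0}$ operators with finite Blaschke minimal functions, each unitarily equivalent to $S(\theta_{n})$ by Arveson's theorem, and a limiting argument exploiting weak-$\ast$ continuity of the functional calculus and the convergence $\theta_{n}\to\theta$ assembles these unitaries into the equivalence $T\cong S(\theta)$. I expect the decisive difficulty to be exactly this transfer step — proving the cyclicity of $v$ (equivalently, that $b_{\lambda}(T)$ is a partial isometry) from the mere equality $\|\psi(T)\|=1$ — together with controlling the limit in the infinite-Blaschke case, where no closed-form functional model for $S(\theta)$ is at hand.
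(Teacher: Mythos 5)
Your preparatory reductions are sound: $\psi(T)$ does have rank one (the paper gets this from Theorem \ref{t-invsub} applied to $N=\ol{\psi(T)\hil}$, whose minimal function is $b_{\lambda}$), the normalization $\psi(T)=\langle\cdot,v\rangle w$ with $\|v\|=\|w\|=1$ is correct, the defect identity $(I-\lambda T^*)(I-b_{\lambda}(T)^*b_{\lambda}(T))(I-\ol{\lambda}T)=(1-|\lambda|^2)(I-T^*T)$ checks out, and the cyclicity of $v$ is exactly Lemma \ref{l-cyclic} (since $\psi(T)v=w$ has norm one), so you need not reprove it. The problem is that neither of the two routes you then open is closed. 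The reduction to ``$b_{\lambda}(T)$ is a partial isometry'' (equivalently $\operatorname{rank}(I-T^*T)=1$) is announced as the decisive step and then abandoned; nothing in the proposal derives this from $\|\psi(T)\|=1$. The fallback --- cyclicity of $v$ plus Theorem \ref{t-maxarveson} --- only covers finite Blaschke products, and the extension to infinite $\theta$ is precisely the new content of the theorem. Your limiting argument for that case has a genuine coherence problem: the unitaries $W_n:\ker\theta_n(T)\to H(\theta_n)$ obtained by applying the finite case to each partial product are far from unique (one may compose with any unitary commuting with $S(\theta_n)$), so there is no reason for them to be compatible on the nested subspaces or to converge; making them coherent requires an additional normalization and an argument you have not supplied. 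Nor does knowing $T|\ker\theta_n(T)\cong S(\theta_n)$ for all $n$ obviously yield $\operatorname{rank}(I-T^*T)=1$, since the defect operator of a restriction is not controlled by the defect operator of $T$.

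The paper's proof sidesteps all of this and treats finite and infinite products uniformly: once one has a unit cyclic vector $\xi$ with $\|\phi(T)\xi\|=1$ for every inner divisor $\phi$ of $\psi$, the equality $\|\phi(U)\xi\|=\|\phi(T)\xi\|$ for the minimal unitary dilation $U$ forces $\phi(T)\xi=\phi(U)\xi$; this rigidity makes $\{b_{\lambda}^n(U)\xi\}_{n\in\Z}$ orthonormal, and the unitary $\Lambda$ sending $b_{\lambda}^n(U)\xi$ to $b_{\lambda}^nk_{\lambda}$ in $L^2$ carries $\hil$ onto $H(\theta)$ (this is where Lemma \ref{l-basis} enters) and intertwines $T$ with $S(\theta)$. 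If you wish to salvage your scheme, some such dilation input is also the natural way to prove that $b_{\lambda}(T)$ is a partial isometry; as written, the transfer step remains open.
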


Let us also emphasize here that the condition $\|\psi(T)\|=1$ is formally weaker than maximality: although cyclic vectors are known to be plentiful (see Theorem \ref{t-maxvector}), it is not immediately clear that an operator must achieve its norm on one of them. That this is indeed the case follows from the proof of Theorem \ref{t-maxarveson} and to the best of our knowledge it was not observed before.

Finally, in Section 4 we are concerned with similarity rather than unitary equivalence. The basic idea is to weaken the
condition appearing in Theorem \ref{t-intromain2} while still obtaining
similarity between $T$ and its Jordan model. Our results  improve upon the work that was done in \cite{clouatre2013}.

\textit{Acknowledgements}: The author would like to thank Hari Bercovici for several fruitful discussions, and for pointing the existence of the paper \cite{moore1974}.

\section{Unitary equivalence and boundary representations}
In this section, we investigate $*$-representations of
$C^*$-algebras related to $C_0$ operators and their connection with
unitary equivalence of such operators to their Jordan models. The
first result we need is inspired by the discussion found  on page 201 of
\cite{arveson1969}.  We denote by $\kil(\hil)$ the ideal
of compact operators on a Hilbert space $\hil$.
\begin{lemma}\label{l-repCT}
Let $T\in B(\hil)$ be an operator which is not unitary and with the property that $I-TT^*$ and $I-T^*T$ are compact  and
that $\{T\}'$ is irreducible.
\begin{enumerate}
\item[\rm{(i)}]  If we denote by $\J$ the closed ideal  of $C^*(\{T\}')$ generated by $I-T^*T$ and $I-TT^*$, then $\J=\kil(\hil)$.

\item[\rm{(ii)}] Assume that $\pi$ is a
$*$-representation of $C^*(\{T\}')$. Then, $\pi(T)$ is unitarily
equivalent to $ (T\otimes I_{\hil'}) \oplus U, $ where $U$ is a
unitary operator with spectrum contained in the essential spectrum
of $T$ and $\hil'$ is another Hilbert space.
\end{enumerate}
\end{lemma}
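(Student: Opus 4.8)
The plan is to prove part (i) first, then deduce part (ii) from it using the representation theory of $C^*$-algebras containing the compacts. For part (i), the key observation is that $\J$ is a closed two-sided ideal of the irreducible $C^*$-algebra $C^*(\{T\}')$. Since $\{T\}'$ is irreducible, so is $C^*(\{T\}')$, hence any nonzero closed ideal $\J$ has trivial commutant and acts irreducibly on $\hil$. Because $\J$ is generated by $I-T^*T$ and $I-TT^*$, which are compact by hypothesis, every element of $\J$ is compact (the compacts form a closed ideal, so the ideal generated by compact elements inside any $C^*$-algebra of operators consists of compacts). Thus $\J$ is a nonzero, irreducibly acting $C^*$-algebra of compact operators on $\hil$. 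By the standard structure theorem for such algebras (an irreducible $C^*$-algebra of compact operators contains all the compacts), we conclude $\J = \kil(\hil)$. One must first check $\J \neq \{0\}$: this is where we use that $T$ is not unitary, so that at least one of $I-T^*T$, $I-TT^*$ is nonzero.

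For part (ii), I would decompose $\pi$ relative to the ideal $\J = \kil(\hil)$. Write $\hil_\pi$ for the representation space. The subspace $\hil_0 = \overline{\pi(\kil(\hil))\hil_\pi}$ is reducing for $\pi(C^*(\{T\}'))$, and on $\hil_0$ the representation $\pi|_{\kil(\hil)}$ is nondegenerate. By the classical description of nondegenerate representations of the compacts, $\pi|_{\hil_0}$ is unitarily equivalent to a multiple of the identity representation, i.e.\ there is a Hilbert space $\hil'$ and a unitary $W: \hil_0 \to \hil \otimes \hil'$ intertwining $\pi|_{\kil(\hil)}$ with $k \mapsto k \otimes I_{\hil'}$. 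Since $\pi(C^*(\{T\}'))$ commutes with the ideal action, this forces $\pi(A)|_{\hil_0}$ to have the form $A \otimes I_{\hil'}$ for every $A \in C^*(\{T\}')$, and in particular $\pi(T)|_{\hil_0} \cong T \otimes I_{\hil'}$. (The point is that the relative commutant of $\pi(\kil(\hil))$ inside $B(\hil_0)$ is $I_\hil \otimes B(\hil')$, and the compression of $\pi$ to $\hil_0$ thus factors through the identity rep.)

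On the orthogonal complement $\hil_0^\perp$, the representation $\pi$ annihilates $\kil(\hil) = \J$, so it descends to a $*$-representation $\bar\pi$ of the Calkin-type quotient $C^*(\{T\}')/\J$. Consequently $\pi(T)|_{\hil_0^\perp}$ is a normal operator whose spectrum is controlled by the image of $T$ in that quotient; more precisely, the image of $T$ modulo the compacts is a normal element (its self-commutator $T^*T - TT^*$ lies in $\kil(\hil) \subseteq \J$), so $\bar\pi(T)$ is a normal operator with spectrum contained in the essential spectrum $\sigma_e(T)$. Since $T$ is a contraction of class $C_0$ — hence with $\sigma(T) \subseteq \overline{\D}$ — one checks $\sigma_e(T) \subseteq \mathbb{T}$: indeed any point of $\sigma(T) \cap \D$ is an isolated eigenvalue of finite multiplicity because $I - \overline\lambda T$ is then Fredholm, so it is not in the essential spectrum. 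Therefore $U := \pi(T)|_{\hil_0^\perp}$ is unitary with $\sigma(U) \subseteq \sigma_e(T)$, and assembling the two pieces gives $\pi(T) \cong (T \otimes I_{\hil'}) \oplus U$ as claimed.

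The main obstacle I anticipate is the bookkeeping in part (ii): carefully justifying that the compression of $\pi$ to $\hil_0$ really is a \emph{multiple} of the identity representation with the \emph{same} multiplicity space appearing uniformly for all of $C^*(\{T\}')$ (not just for $\kil(\hil)$), and cleanly identifying the relative commutant. The normality and spectral containment on $\hil_0^\perp$ are comparatively routine once one notes $T^*T - TT^* \in \kil(\hil)$ and invokes Fredholm theory to place $\sigma_e(T)$ on the circle.
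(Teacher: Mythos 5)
Your overall strategy is the same as the paper's: for (i), observe that $\J$ is a nonzero irreducibly acting ideal consisting of compact operators and invoke the structure theorem for irreducible $C^*$-algebras of compacts (the paper cites Lemma I.9.15 and Corollary I.10.4 of Davidson); for (ii), decompose $\pi$ into the part on which $\pi|\J$ acts nondegenerately and the part factoring through $C^*(\{T\}')/\J$ (the paper outsources this to Arveson's Lemma 3.4.4 and the uniqueness of the extension of a nondegenerate representation of an ideal). Part (i) is fine as written.

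Two points in part (ii) need repair. First, your justification that $\pi(A)|_{\hil_0}$ has the form $A\otimes I_{\hil'}$ is misstated: $\pi(C^*(\{T\}'))$ does \emph{not} commute with $\pi(\kil(\hil))$, and $\pi(A)$ does not lie in the relative commutant $I_{\hil}\otimes B(\hil')$. The correct argument is the uniqueness of the extension of a nondegenerate representation of an ideal: since $Ak\in\kil(\hil)$ for $k$ compact, one has $\pi(A)\pi(k)\xi=\pi(Ak)\xi\cong (Ak)\otimes I_{\hil'}=(A\otimes I_{\hil'})(k\otimes I_{\hil'})$, and the vectors $\pi(k)\xi$ span a dense subspace of $\hil_0$, which pins down $\pi(A)|_{\hil_0}=A\otimes I_{\hil'}$. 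This is exactly the ``uniqueness'' step the paper invokes from Arveson. Second, your argument that the quotient piece is unitary appeals to $T$ being a contraction of class $C_0$, which is \emph{not} a hypothesis of this lemma (it is only assumed that $T$ is non-unitary with compact defects and irreducible commutant). The detour through essential normality and Fredholm theory is unnecessary and imports an unwarranted assumption: since $I-T^*T$ and $I-TT^*$ both lie in $\J$ by definition, the image of $T$ in $C^*(\{T\}')/\J$ is literally a unitary element, so $\bar\pi(T)$ is unitary, and by spectral permanence its spectrum is contained in the spectrum of $T+\kil(\hil)$ in the Calkin algebra, i.e.\ in $\sigma_e(T)$. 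With these two fixes your proof matches the paper's.
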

\begin{proof}
The fact that $C^*(\{T\}')$ is irreducible immediately implies that
the ideal $\J$ is irreducible (see Lemma I.9.15  in
\cite{davidson1996}). By assumption, $\J$ is a non-zero $C^*$-subalgebra of
$\kil(\hil)$. Thus $\kil(\hil)=\J$ by Corollary I.10.4 of
\cite{davidson1996}, which proves (i). Moreover, Lemma 3.4.4 of
\cite{arveson1969} shows that the representation $\pi$ can be
decomposed as
$$
\pi(x)=\pi_1(x)\oplus\pi_2(x+\J)
$$
for every $x\in C^*(\{T\}')$, where the $*$-representation $\pi_1$
is the unique extension to $C^*(\{T\}')$ of a $*$-representation of
$\J$ and $\pi_2$ is a $*$-representation of $C^*(\{T\}')/\J$. Since
$\J=\kil(\hil)$, it is well-known (see Corollary I.10.7 of
\cite{davidson1996}) that $\pi_1|\J$ must be unitarily equivalent to a
multiple of the identity representation, and by uniqueness so must be $\pi_1$. On the hand, $\pi_2(T)$ is
a unitary operator with spectrum contained in the essential spectrum
of $T$, which shows (ii) and finishes the proof.
\end{proof}

We note that in the statement above we allow for both $\hil'$ and
the space on which $U$ acts to be zero. In other words, one of the
pieces $U$ or $T\otimes I_{\hil'}$ can be absent. This is the case
if we specialize Lemma \ref{l-repCT} to contractions of
class $C_0$.

\begin{lemma}\label{l-C0rep}
Let $T_1\in B(\hil_1)$ be an operator of class $C_0$ with the
property that $\{T_1\}'$ is irreducible and $I-T_1^*T_1, I-T_1T_1^*$
are compact. Let $T_2\in B(\hil_2)$ be another operator of class
$C_0$. Assume that there exists a $*$-homomorphism
$$
\pi:C^*(\{T_1\}')\to B(\hil_2)
$$
such that $\pi(T_1)=T_2$. Then,
$T_2$ is unitarily equivalent to $T_1\otimes I_{\hil'}$ for some
Hilbert space $\hil'$.
\end{lemma}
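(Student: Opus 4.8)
The plan is to reduce the statement to Lemma~\ref{l-repCT}. First I would check that $T_1$ meets the hypotheses of that lemma: since $T_1$ is of class $C_0$ it is a completely non-unitary contraction, hence in particular not unitary, while the compactness of $I-T_1^*T_1$ and $I-T_1T_1^*$ and the irreducibility of $\{T_1\}'$ are assumed outright. Taking $\pi$ to be non-degenerate (equivalently unital, which is the case of interest since such $\pi$ arise by extending a unital completely isometric isomorphism to the generated $C^*$-algebras), part (ii) of Lemma~\ref{l-repCT} then produces a unitary equivalence
$$
T_2=\pi(T_1)\cong (T_1\otimes I_{\hil'})\oplus U,
$$
where $U$ is a unitary operator acting on some Hilbert space that a priori may or may not be $\{0\}$, and $\hil'$ is some Hilbert space.

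The only thing left is to eliminate the unitary summand. Here I would invoke that $T_2$, being of class $C_0$, is by definition a completely non-unitary contraction, so it possesses no non-zero reducing subspace on which it acts unitarily. If $U$ acted on a non-zero space $\hil_U$, then under the displayed unitary equivalence $\{0\}\oplus\hil_U$ would be exactly such a subspace, a contradiction. Hence $U$ is absent and $T_2\cong T_1\otimes I_{\hil'}$, as desired.

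I do not anticipate any real obstacle: Lemma~\ref{l-repCT}(ii) already carries all the structural weight, and what remains is only the elementary observation that a completely non-unitary contraction cannot split off a unitary piece. The sole point worth flagging is the routine one about degenerate representations — one wants $\pi$ non-degenerate so that the conclusion of Lemma~\ref{l-repCT}(ii) holds on all of $\hil_2$ rather than merely on $\pi(I_{\hil_1})\hil_2$ — and under the standing conventions this is harmless.
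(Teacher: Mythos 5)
Your proposal is correct and follows the paper's own argument exactly: apply Lemma~\ref{l-repCT}(ii) to obtain $T_2\cong(T_1\otimes I_{\hil'})\oplus U$, then note that the complete non-unitarity of the class-$C_0$ operator $T_2$ forces the unitary summand $U$ to act on the zero space. Your added remark about non-degeneracy of $\pi$ is a sensible (if routine) precaution that the paper leaves implicit.
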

\begin{proof}
By virtue of Lemma \ref{l-repCT}, we see that $T_2=\pi(T_1)$ is
unitarily equivalent to $(T_1\otimes I_{\hil'})\oplus U$ for some
unitary $U$. Since $T_2$ is of class $C_0$, it must be completely
non-unitary and thus $U$ acts on the zero space, so that $T_2$ is in
fact unitarily equivalent to $ T_1\otimes I_{\hil'}.$
\end{proof}

Next, we need some results of Bercovici and Voiculescu (see
\cite{bercovici1977}). Recall that a contraction $T$ is said to be \textit{weak} if $I-T^*T$ belongs to the ideal of trace class operators.
\begin{theorem}\label{t-compactdefect}
Let $T$ be an operator of class $C_0$ with Jordan model $J$. Then,
\begin{enumerate}
\item[\rm{(i)}] $T$ is a weak contraction if and only if $T^*$ is a weak contraction
\item[\rm{(ii)}] $T$ is a weak contraction if and only if $J$ is a weak contraction
%\item[\rm{(iii)}] $I-T^*T$ is compact if $I-J^*J$ is compact.
\end{enumerate}
\end{theorem}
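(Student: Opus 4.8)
The two parts are of very different natures, so the plan is to handle them separately.

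\emph{Part (i).} This is essentially elementary and uses nothing about $C_0$ except a fact about kernels. First I would use the polar decomposition $T=V|T|$, with $V$ a partial isometry from $(\ker T)^\perp$ onto $\overline{\ran T}$, to write $TT^*=V(T^*T)V^*$; this shows that the restriction of $TT^*$ to $(\ker T^*)^\perp=\overline{\ran T}$ is unitarily equivalent, via $V$, to the restriction of $T^*T$ to $(\ker T)^\perp$. Since $T^*T$ and $TT^*$ leave the relevant orthogonal decompositions invariant, I would then record
\[
I-T^*T\;\cong\; I_{\ker T}\oplus R,\qquad I-TT^*\;\cong\; I_{\ker T^*}\oplus R
\]
for one and the same positive operator $R$. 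Consequently $I-T^*T$ is of trace class if and only if $\dim\ker T<\infty$ and $R$ is of trace class, and likewise for $I-TT^*$ with $\dim\ker T^*$; so it suffices to prove $\dim\ker T=\dim\ker T^*$. For this I would invoke Theorem \ref{t-classification}: $T$ is quasisimilar to its Jordan model $J$, hence $T^*$ is quasisimilar to $J^*$, and a quasiaffinity maps the kernel of an operator injectively into the kernel of the intertwined operator, so $\dim\ker T=\dim\ker J$ and $\dim\ker T^*=\dim\ker J^*$. Finally, a direct computation on a single Jordan block shows that $\ker S(\theta)$ and $\ker S(\theta)^*$ are both zero if $\theta(0)\neq 0$ and both one-dimensional if $\theta(0)=0$; summing over the blocks gives $\dim\ker J=\dim\ker J^*$, which closes part (i).

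\emph{Part (ii).} Here I would go through the characteristic function $\Theta_T$ of $T$, which is inner because $T$ is of class $C_0$ and which acts between defect spaces $\mathfrak{D}_T$ and $\mathfrak{D}_{T^*}$ of equal dimension (a $C_0$ contraction has a scalar multiple). The plan has four steps. First, a direct computation in the functional model gives the identity $I-T^*T\cong L^*L$ with $LL^*=I_{\mathfrak{D}_T}-\Theta_T(0)^*\Theta_T(0)$, hence
\[
\tr(I-T^*T)=\tr\bigl(I_{\mathfrak{D}_T}-\Theta_T(0)^*\Theta_T(0)\bigr),
\]
so $T$ is a weak contraction exactly when $I_{\mathfrak{D}_T}-\Theta_T(0)^*\Theta_T(0)$ is of trace class. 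Second, I would identify this with the non-degeneracy of the determinant function of $T$ in the sense of Sz.-Nagy--Foias: for a completely non-unitary contraction whose spectrum does not fill $\D$ --- which is automatic in class $C_0$ --- having trace-class defect is equivalent to $\det\Theta_T\not\equiv 0$, so that the inner part $d_T$ of $\det\Theta_T$ is a well-defined nonzero inner function. Third, I would note that $d_T$ depends only on the invariant factors $\{\theta_i\}$ of $T$ --- in fact $d_T$ is the inner part of $\prod_i\theta_i$ --- and since these factors are exactly the parameters of the Jordan model, Theorem \ref{t-classification} gives $d_T=d_J$. Fourth, for the Jordan operator $J=\bigoplus_i S(\theta_i)$ everything is explicit: applying the identity of the first step to each block gives $\tr(I-J^*J)=\sum_i\bigl(1-|\theta_i(0)|^2\bigr)$, and one checks that this is finite precisely when the product $\prod_i\theta_i$ converges to a nonzero inner function, i.e. precisely when $d_J\neq 0$. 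Stringing the steps together yields $T$ weak $\iff d_T\neq 0\iff d_J\neq 0\iff J$ weak.

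The main obstacle is the second step: the equivalence, for a completely non-unitary contraction with spectrum not equal to $\overline{\D}$, between trace-class defect and $\det\Theta_T\not\equiv 0$. This is the heart of the Sz.-Nagy--Foias theory of weak contractions and is a genuine theorem in its own right; it cannot be read off from the model identity of step one, because the characteristic function is only \emph{quasi-equivalent}, not unitarily equivalent, to its Jordan (Smith) normal form $\bigoplus_i\theta_i$, and $\tr\bigl(I-\Theta(0)^*\Theta(0)\bigr)$ is not an invariant of quasi-equivalence --- only its finiteness is, and recognizing that is precisely what the determinant machinery provides. Granting that input, the remaining ingredients are routine: the model identity of step one, the identification of $d_T$ with the product of invariant factors, and the explicit computation of $\tr(I-J^*J)$ for Jordan operators together with the classical criterion $\sum_i(1-|\theta_i(0)|)<\infty$ for convergence of infinite products of inner functions.
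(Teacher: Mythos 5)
The paper does not prove this theorem; it imports it from Bercovici--Voiculescu \cite{bercovici1977}, so there is no internal argument to compare yours against. Judged on its own, your part (i) is correct: the unitary equivalence of $(I-T^*T)|(\ker T)^{\perp}$ with $(I-TT^*)|(\ker T^*)^{\perp}$ via the polar decomposition reduces everything to the equivalence of $\dim\ker T<\infty$ and $\dim\ker T^*<\infty$, and the passage through the Jordan model (quasiaffinities carry kernels injectively into kernels, and $\dim\ker S(\theta)=\dim\ker S(\theta)^*\in\{0,1\}$ according to whether $\theta(0)=0$) settles that. This is a clean, essentially elementary argument.

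Part (ii), however, is not a proof. You have correctly identified the classical route --- the determinant function $d_T$ of the characteristic function and the identity $d_T=d_J$ --- but two of your four steps are the theorem itself in disguise. Step two (trace-class defect if and only if $\det\Theta_T\not\equiv 0$) you explicitly grant as input, and as you yourself observe it is the heart of the Sz.-Nagy--Foias theory of weak contractions. Worse, even granting it, the implication ``$J$ weak $\Rightarrow$ $T$ weak'' is circular in your outline: $\det\Theta_T$ is only defined when $I-\Theta_T(\lambda)^*\Theta_T(\lambda)$ is of trace class, i.e.\ when $T$ already has trace-class defect, so neither the assertion $d_T\neq 0$ nor the identity $d_T=d_J$ of step three is available until the conclusion you are trying to reach has been established. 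Step three is itself a substantial theorem --- it requires the quasi-equivalence of $\Theta_T$ to its diagonal (Smith) form together with the behaviour of determinants, or of exterior powers, under quasi-equivalence --- not a routine identification. The only genuinely complete pieces of part (ii) are the computation $\tr(I-J^*J)=\sum_i\bigl(1-|\theta_i(0)|^2\bigr)$ and the convergence criterion for $\prod_i\theta_i$. To close the gap you would need the actual content of \cite{bercovici1977}: an argument (via $C_0$-Fredholm theory or tensor/exterior operations on characteristic functions) that transfers the trace-class condition itself, and not merely the determinant, between $T$ and its Jordan model.
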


\begin{lemma}\label{l-weakdirectsum}
Let $T_1$ and $T_2$ be quasisimilar weak contractions of class
$C_0$. If $T_1$ is unitarily equivalent to $T_2\otimes I_{\hil'}$,
then $\hil'$ is one dimensional and $T_1$ is unitarily equivalent to
$T_2$.
\end{lemma}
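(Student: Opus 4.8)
The plan is to push everything to the level of Jordan models, where the structure is completely rigid, and to use the weak contraction hypothesis exactly to force this common model to have \emph{finite} multiplicity; the argument then reduces to counting Jordan blocks.

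First I would record that $T_1$ and $T_2$, being quasisimilar operators of class $C_0$, share a Jordan model $J=\bigoplus_{\alpha}S(\theta_{\alpha})$ by Theorem \ref{t-classification}. Since $T_1$ is a weak contraction, Theorem \ref{t-compactdefect}(ii) shows that $J$ is a weak contraction as well. Now $I-J^{*}J=\bigoplus_{\alpha}\bigl(I-S(\theta_{\alpha})^{*}S(\theta_{\alpha})\bigr)$, and each summand indexed by a nonconstant $\theta_{\alpha}$ is a nonzero positive operator, because the Jordan block $S(\theta_{\alpha})$ acts on a nonzero space and is not an isometry (it is of class $C_0$). Since $I-J^{*}J$ is of trace class, only finitely many of the $\theta_{\alpha}$ can be nonconstant, so $J=S(\theta_{0})\oplus\dots\oplus S(\theta_{m-1})$ with $\theta_{0},\dots,\theta_{m-1}$ nonconstant and $\theta_{k+1}$ dividing $\theta_{k}$. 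Discarding the trivial case $\hil_{1}=\{0\}$, we may assume $m\geq 1$.

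The core of the proof is then as follows. Choosing quasiaffinities implementing the quasisimilarity of $T_{2}$ with $J$ and tensoring them with the identity of $\hil'$, one checks that $T_{1}$, being unitarily equivalent to $T_{2}\otimes I_{\hil'}$, is quasisimilar to $J\otimes I_{\hil'}=\bigoplus_{k<m}S(\theta_{k})\otimes I_{\hil'}$; hence $J\otimes I_{\hil'}$ has Jordan model $J$ by uniqueness (Theorem \ref{t-classification}). On the other hand, $S(\theta_{k})\otimes I_{\hil'}$ is unitarily equivalent to a direct sum of $\dim\hil'$ copies of $S(\theta_{k})$, so $J\otimes I_{\hil'}$ is unitarily equivalent to a direct sum of $m\cdot\dim\hil'$ nonconstant Jordan blocks. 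If $\dim\hil'$ were infinite, this operator would have infinite multiplicity and its Jordan model would then have infinitely many nonconstant blocks, contradicting the fact that it equals $J$; hence $\dim\hil'=n$ is finite. In that case $J\otimes I_{\hil'}$ has multiplicity $mn$, and equating this with the multiplicity $m$ of its Jordan model $J$ forces $n=1$ since $1\leq m<\infty$. Therefore $\hil'$ is one dimensional, and $T_{1}$, being unitarily equivalent to $T_{2}\otimes I_{\hil'}$ with $\hil'$ one dimensional, is unitarily equivalent to $T_{2}$.

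The step I expect to require the most care is the bookkeeping with multiplicities: one must make sure that the ``number of nonconstant Jordan blocks'' transfers correctly from an arbitrary direct-sum decomposition of $J\otimes I_{\hil'}$ to its Jordan model, and that the possibility $\dim\hil'\geq\aleph_{0}$ is genuinely ruled out rather than tacitly ignored. The clean way to handle this is to phrase the invariant as the multiplicity, i.e. the minimal number of cyclic generators, which is manifestly a quasisimilarity invariant of $C_{0}$ operators, equals the number of nonconstant blocks for a Jordan operator of finite multiplicity, and scales by $\dim\hil'$ under tensoring with $I_{\hil'}$. It is precisely at this point that the weak contraction hypothesis is indispensable: it is what makes $m$ finite, and without finiteness of $m$ the relation $m\cdot\dim\hil'=m$ would carry no information.
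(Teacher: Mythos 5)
Your reduction to Jordan models is sound in outline, and most of the individual steps (the shared Jordan model $J$, the quasisimilarity of $J\otimes I_{\hil'}$ to $J$, the behaviour of Jordan blocks under tensoring with $I_{\hil'}$) are correct. The proof breaks, however, at the claim that the trace-class condition on $I-J^{*}J$ forces only finitely many of the $\theta_{\alpha}$ to be nonconstant. The operator $I-S(\theta_{\alpha})^{*}S(\theta_{\alpha})$ is a rank-one positive operator of trace $1-|\theta_{\alpha}(0)|^{2}$, so $I-J^{*}J$ is of trace class exactly when $\sum_{\alpha}\bigl(1-|\theta_{\alpha}(0)|^{2}\bigr)<\infty$; a direct sum of infinitely many nonzero positive operators can perfectly well be of trace class when the individual traces are summable. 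Weak contractions of class $C_0$ with infinitely many nonconstant Jordan blocks do exist: take $\lambda_{n}=1-4^{-n}$ and $\theta_{k}=\prod_{n\geq k}b_{\lambda_{n}}$, so that $\theta_{k+1}$ divides $\theta_{k}$, every $\theta_{k}$ is a nonconstant Blaschke product, and $\sum_{k}\bigl(1-|\theta_{k}(0)|^{2}\bigr)\leq 2\sum_{n}(n+1)\,4^{-n}<\infty$. This is not a cosmetic omission: once $m=\aleph_{0}$ is allowed, the relation $m\cdot\dim\hil'=m$ carries no information, as you yourself note at the end, so your counting argument genuinely fails to cover all weak contractions of class $C_0$.

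The paper's own proof replaces the crude count of blocks by the determinant function $d_{T}=\prod_{\alpha}\theta_{\alpha}$, which for a weak contraction of class $C_0$ on a nonzero space is a nonconstant inner function and a quasisimilarity invariant; since $d_{T_{2}\otimes I_{\hil'}}=d_{T_{2}}^{\dim\hil'}$, the equality $d_{T_{1}}=d_{T_{2}}$ forces $\dim\hil'=1$ with no finiteness assumption on the multiplicity. If you prefer to stay with Jordan models, the repair is to compare the models block by block rather than by cardinality: for $2\leq n=\dim\hil'<\infty$ the Jordan model of $J\otimes I_{\C^{n}}$ is $\bigoplus_{k}S(\theta_{\lfloor k/n\rfloor})$, and uniqueness of the model gives $\theta_{k}=\theta_{\lfloor k/n\rfloor}$ for all $k$, hence by induction $\theta_{k}=\theta_{0}$ for all $k$; this makes $\sum_{k}\bigl(1-|\theta_{k}(0)|^{2}\bigr)$ divergent unless $\theta_{0}$ is constant, i.e.\ unless the spaces are trivial. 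Either route supplies the finiteness (indeed the value $1$) that your argument assumed.
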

\begin{proof}
This follows immediately from a consideration of the determinant
functions of $T_1$ and $T_2$, which must be equal (see section 6.3
of \cite{bercovici1988} for more details).
\end{proof}

The next corollary is the link between $*$-representations and unitary equivalence.

\begin{corollary}\label{c-C0rep}
Let $T_1\in B(\hil_1)$ be a weak contraction of class $C_0$ with the
property that $\{T_1\}'$ is irreducible. Let $T_2\in B(\hil_2)$ be another operator of class $C_0$
which is quasisimilar to $T_1$.  Assume that there exists a
$*$-homomorphism 
$$
\pi:C^*(\{T_1\}')\to B(\hil_2)
$$
such that
$\pi(T_1)=T_2$. Then, $T_2$ is unitarily equivalent to $T_1$.
\end{corollary}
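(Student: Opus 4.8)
The plan is to assemble the corollary directly from the preparatory lemmas above, the only real content being a bookkeeping argument that all the weak-contraction hypotheses propagate where they are needed. First I would record that both defect operators of $T_1$ are compact: since $T_1$ is a weak contraction of class $C_0$, Theorem \ref{t-compactdefect}(i) gives that $T_1^*$ is also a weak contraction, so $I-T_1^*T_1$ and $I-T_1T_1^*$ are each of trace class, hence compact. Combined with the standing hypothesis that $\{T_1\}'$ is irreducible, this places us exactly in the setting of Lemma \ref{l-C0rep}. Applying that lemma to the given $*$-homomorphism $\pi$ with $\pi(T_1)=T_2$ yields that $T_2$ is unitarily equivalent to $T_1\otimes I_{\hil'}$ for some Hilbert space $\hil'$.

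The next step is to see that $T_2$ is itself a weak contraction, so that Lemma \ref{l-weakdirectsum} becomes applicable. Since $T_1$ and $T_2$ are quasisimilar operators of class $C_0$, Theorem \ref{t-classification} shows they share a common Jordan model $J$. By Theorem \ref{t-compactdefect}(ii), $J$ is a weak contraction because $T_1$ is; applying Theorem \ref{t-compactdefect}(ii) once more, now in the other direction, $T_2$ is a weak contraction because $J$ is.

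Finally I would invoke Lemma \ref{l-weakdirectsum}, with the roles of its two operators interchanged relative to the statement there: $T_1$ and $T_2$ are quasisimilar weak contractions of class $C_0$, and $T_2$ is unitarily equivalent to $T_1\otimes I_{\hil'}$. The lemma then forces $\hil'$ to be one-dimensional and gives that $T_2$ is unitarily equivalent to $T_1$, which is the desired conclusion.

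There is no genuinely hard step here; the corollary is a clean composition of Lemmas \ref{l-C0rep} and \ref{l-weakdirectsum}. The single point deserving care is the verification that $T_2$ inherits the weak-contraction property, since this is not part of the hypotheses and is what rules out an infinite-dimensional $\hil'$; the passage through the common Jordan model via Theorem \ref{t-compactdefect} is exactly what supplies it.
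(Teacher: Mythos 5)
Your proposal is correct and follows essentially the same route as the paper's proof: use Theorem \ref{t-compactdefect} together with the common Jordan model to see that both operators have trace-class (hence compact) defects and are weak contractions, then apply Lemma \ref{l-C0rep} followed by Lemma \ref{l-weakdirectsum}. The only difference is that you spell out the bookkeeping in more detail than the paper does.
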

\begin{proof}
Since $T_2$ and $T_1$ are quasisimilar, they share the same Jordan
model. By Theorem \ref{t-compactdefect}, we see that $I-T^*_iT_i$
and $I-T_i T_i^*$ are of trace class for $i=1,2$. In light of
Lemma \ref{l-C0rep}, we know that $T_2$ is unitarily equivalent to
$T_1\otimes I_{\hil'}$, and an application of Lemma
\ref{l-weakdirectsum} shows that $T_1$ and $T_2$ are unitarily
equivalent.
\end{proof}

It is typically quite difficult to construct $*$ -representations of
$C^*(\{T\}')$ in order to apply this result. We follow here one method
to obtain such representations which is originally due to Arveson
\cite{arveson1969}. Recall that given a unital (non-self-adjoint) subalgebra $\A \subset B(\hil_1)$, we say that an irreducible
$*$-representation $$\pi:C^*(\A)\to B(\hil)$$ is a \textit{boundary
representation} for $\A$ if the only unital completely positive
extension of $\pi|\A$ to $C^*(\A)$ is $\pi$ itself (we refer the reader to \cite{arveson1969} or \cite{paulsen2002} for further details and definitions).
Our next goal is to establish that for weak contractions of class
$C_0$ with irreducible commutant, the identity representation of
$C^*(\{T\}')$ is a boundary representation for $\{T\}'$. The main
tool is the following result, known as Arveson's boundary theorem
(see Theorem 2.1.1 in \cite{arveson1972}).

\begin{theorem}\label{t-bdryrep}
Let $\A\subset B(\hil)$ be an irreducible unital subalgebra
with the property that $C^*(\A)$ contains $\kil(\hil)$ and that the
quotient map $$q:B(\hil)\to B(\hil)/\kil(\hil)$$ is not completely
isometric on $\A$. Then, the identity representation of $C^*(\A)$ is
a boundary representation for $\A$.
\end{theorem}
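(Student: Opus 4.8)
The plan is to unwind the definition of a boundary representation and reduce everything to a uniqueness statement for completely positive extensions. So let $\phi:C^*(\A)\to B(\hil)$ be a unital completely positive map with $\phi|\A=\id_\A$; the goal is to prove $\phi=\id_{C^*(\A)}$, since (the identity representation being automatically irreducible because $C^*(\A)$ is) this is exactly what it means for the identity representation to be a boundary representation for $\A$. First I would pass to a minimal Stinespring dilation: a Hilbert space $\kil\supseteq\hil$ and a $*$-representation $\rho:C^*(\A)\to B(\kil)$ with $\phi(x)=P_\hil\rho(x)|\hil$ for all $x$ and $\kil=\overline{\rho(C^*(\A))\hil}$. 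Because $\phi|\A=\id_\A$ is multiplicative, the usual computation with the Stinespring inclusion shows that $\hil$ is a semi-invariant subspace for $\rho(\A)$.

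Next I would use the hypothesis $\kil(\hil)\subseteq C^*(\A)$ to pin down $\rho$. Since every nondegenerate representation of $\kil(\hil)$ is (unitarily equivalent to) a multiple of the identity representation, the support projection of $\kil(\hil)$ under $\rho$ splits $\kil$ as a reducing direct sum $\kil=\kil_0\oplus\kil_1$, where $\rho|\kil_0$ is a multiple of the identity representation and $\rho$ annihilates $\kil(\hil)$ on $\kil_1$. Using that $\kil(\hil)$ is an essential ideal of $C^*(\A)$, together with the fact that the idealizer of $\kil(\hil)\otimes I_{\hil'}$ inside $B(\hil\otimes\hil')$ is $B(\hil)\otimes I_{\hil'}$, one identifies $\kil_0$ with $\hil\otimes\hil'$ and $\rho|\kil_0$ with $x\mapsto x\otimes I_{\hil'}$, while $\rho|\kil_1$ factors through the Calkin-type quotient $q:C^*(\A)\to C^*(\A)/\kil(\hil)$, say $\rho|\kil_1=\rho_1\circ q$. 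Writing the Stinespring isometry as $V=V_0\oplus V_1:\hil\to\kil_0\oplus\kil_1$, this yields
$$\phi(x)=V_0^*(x\otimes I_{\hil'})V_0+V_1^*\rho_1(q(x))V_1,\qquad V_0^*V_0+V_1^*V_1=I.$$

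It remains to force $\phi=\id$. A useful reduction is that $\phi=\id$ if and only if $\phi|\kil(\hil)=\id_{\kil(\hil)}$: in that case $\kil(\hil)$ lies in the multiplicative domain of $\phi$ and, being an essential ideal, it forces $\phi(x)k=\phi(xk)=xk$ for every $k\in\kil(\hil)$ and hence $\phi(x)=x$. So the whole point is to prove $\phi|\kil(\hil)=\id$, and this is where the assumption that $q$ is not completely isometric on $\A$ must be used decisively — without it the conclusion genuinely fails, for instance when $\A$ is the disc algebra sitting inside the Toeplitz algebra, where $\phi=S^*(\cdot)S$ ($S$ the unilateral shift) is a nontrivial unital completely positive extension of $\id_\A$. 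The mechanism I would aim for is asymptotic: if $\phi|\kil(\hil)\neq\id$, then iterating a unital completely positive self-map of $B(\hil)$ built from $\phi$ that still fixes $\A$ produces maps which restrict to $\id_\A$ but drive $\kil(\hil)$ to $0$; passing to a limit one obtains, for every $n$, every $A=[a_{ij}]\in M_n(\A)$ and every $K\in M_n(\kil(\hil))$, the inequality $\|A\|\le\|A+K\|$, i.e. $\|A\|\le\operatorname{dist}(A,M_n(\kil(\hil)))=\|q_n(A)\|$, so that $q|\A$ is completely isometric, a contradiction.

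The step I expect to be the real obstacle is this last one: showing that whenever $\phi\neq\id$ the iterates (or a suitable limit of them) actually annihilate the compacts. This is precisely where the structural decomposition of the second step has to be exploited — the part of the dilation carried by $\kil_1$ already factors through $q$ and so can contribute at most $\|q_n(A)\|$, while the part carried by $\kil_0$, being a compression of $x\otimes I_{\hil'}$ through the semi-invariant subspace $V_0\hil$, must be shown to lose mass on $\kil(\hil)$ unless $V_0\hil$ reduces $\A\otimes I_{\hil'}$ in the trivial way $\hil\otimes v$. Making that compression estimate precise, and thereby transferring the strict drop of norm from the Calkin quotient back to the dilation, is the technical heart of Arveson's boundary theorem and the part of the argument requiring the most work.
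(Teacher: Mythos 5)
This theorem is not proved in the paper: it is Arveson's boundary theorem, quoted from Theorem 2.1.1 of \cite{arveson1972}, so there is no internal argument to compare yours against. Judged on its own, your proposal correctly lays out the standard architecture but stops short of being a proof. The preparatory steps are sound: reducing the boundary property to uniqueness of the unital completely positive extension $\phi$ of $\id_{\A}$ (the identity representation is irreducible because $\A$ is), passing to a minimal Stinespring dilation, splitting the dilation along the support projection of $\kil(\hil)$ into a multiple of the identity representation plus a piece factoring through the Calkin quotient (this is the same decomposition the paper invokes, via Lemma 3.4.4 of \cite{arveson1969}, in its Lemma \ref{l-repCT}), and observing that $\phi=\id$ on $C^*(\A)$ once $\phi$ fixes $\kil(\hil)$ elementwise, since then $\kil(\hil)$ lies in the multiplicative domain and $(\phi(x)-x)k=0$ for every compact $k$. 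The Toeplitz-algebra example is also an apt sanity check for why the non-complete-isometry hypothesis cannot be dropped.

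The gap is exactly where you locate it, and it is not a small one: the implication that $\phi\neq\id$ forces $q|\A$ to be completely isometric is the entire content of the theorem, and you only describe a ``mechanism I would aim for'' (iterating a unital completely positive self-map fixing $\A$ and arguing that the iterates annihilate the compacts in the limit) without establishing it. As stated the mechanism is not obviously sound: it is unclear why the iterates should drive \emph{every} compact to $0$ merely because $\phi$ fails to fix all of them, nor how the limit yields $\|A\|\le\|q_n(A)\|$ at every matrix level. Likewise, the naive estimate $\|\phi(a)\|\le\|V_0\|^{2}\|a\|+\|V_1\|^{2}\|q(a)\|$ suggested by your decomposition does not close the argument, since only $V_0^*V_0+V_1^*V_1=I$ is available and $\|V_0\|^{2}+\|V_1\|^{2}$ need not equal $1$. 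Making that norm transfer precise is the theorem; until it is supplied, what you have is an outline, and the appropriate course here is to cite \cite{arveson1972} as the paper does.
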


In order to apply this theorem, we also require the following fact from \cite{moore1974}.

\begin{theorem}\label{t-compcomm}
If $T\in B(\hil)$ is an operator of class $C_0$ with the property
that $I-T^*T$ and $I-TT^*$ are compact, then there exists a function
$u\in H^\infty$ with the property that $u(T)$ is a non-zero compact operator.
\end{theorem}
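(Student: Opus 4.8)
The plan is to read the conclusion as a statement about the Calkin algebra $B(\hil)/\kil(\hil)$ and to build $u$ out of the minimal function $\theta$ of $T$. Write $\pi$ for the quotient map. Since $I-T^*T$ and $I-TT^*$ are compact, $\pi(T)$ is a unitary element, so $T$ is Fredholm and $\sigma_e(T)\subseteq\T$; consequently every $\lambda\in\sigma(T)\cap\D$ is an isolated point of the spectrum whose Riesz idempotent $E_\lambda$ has finite rank, because on an infinite-dimensional space $(T-\lambda)|_{\ran E_\lambda}$ is quasinilpotent and hence cannot be Fredholm. What must be produced is $u\in H^\infty\setminus\theta H^\infty$ with $u(T)$ compact, i.e.\ a nonzero compact operator in $H^\infty(T)$.

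Suppose first that $\theta$ has a zero $\lambda$ in $\D$ of multiplicity $m$, and factor $\theta=b_\lambda^{m}\phi$ with $b_\lambda\nmid\phi$. If $\phi\equiv 1$, the minimal function is a finite Blaschke product; since a nontrivial direct sum of copies of $S(b_\lambda^m)$ has non-compact defects, the Jordan model of $T$ then has finite multiplicity, so $\hil$ is finite-dimensional and $u=1$ already works. Otherwise $b_\lambda^m$ and $\phi$ are relatively prime, $\theta H^\infty=b_\lambda^m H^\infty\cap\phi H^\infty$ and $b_\lambda^m H^\infty+\phi H^\infty=H^\infty$, so there is $u\in H^\infty$ with $u\equiv 1\pmod{b_\lambda^m H^\infty}$ and $u\equiv 0\pmod{\phi H^\infty}$. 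Then $u^2-u\in\theta H^\infty$, so $u(T)$ is an idempotent in $\{T\}'$, and since neither $u$ nor $u-1$ lies in $\theta H^\infty$ it is a nontrivial idempotent; under the algebra isomorphism $H^\infty(T)\cong H^\infty/\theta H^\infty$ it is the $b_\lambda$-primary idempotent, so its range is the generalized eigenspace $\ker(T-\lambda)^m=\ran E_\lambda$, which is finite-dimensional by Fredholmness. Hence $u(T)=E_\lambda$ is a nonzero compact operator.

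There remains the case in which $\theta$ is a singular inner function. Here I would pass to the Jordan model $J=\bigoplus_{\alpha<\gamma}S(\theta_\alpha)$ of Theorem \ref{t-classification}: if $X,Y$ are the intertwining quasiaffinities, then $A\mapsto YAX$ sends any nonzero compact operator in $\{J\}'$ to a nonzero compact operator in $\{T\}'$, and an operator $u(S(\theta_\alpha))$ extended by zero lies in $\{J\}'$; so it suffices to find one index $\alpha$ and one $u$ for which $u(S(\theta_\alpha))$ is compact and nonzero (with a little extra bookkeeping — the compactness hypothesis passes to $J$, whence $|\theta_\alpha(0)|\to 1$ and only finitely many $\theta_\alpha$ are ``large'' — one can then upgrade this to an operator literally of the form $u(T)$). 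For the model operator $S(\sigma)$ of a singular inner $\sigma$ with representing measure $\nu$ one builds $u=\psi\,v$, where $\psi$ is a singular inner divisor of $\sigma$ of small mass, so that $\psi(S(\sigma))$ kills the bulk of $H(\sigma)$, and $v$ is an outer function designed to vanish on $\operatorname{supp}\nu$, which is what drives the essential norm of the product to zero.

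This last construction is where I expect the work to lie: it amounts to controlling $\|w(S(\sigma))\|_e$ for $w\in H^\infty$ and $\sigma$ singular inner. The phenomenon is transparent when $\nu$ is an atom — then $S(\sigma)$ is a compact perturbation of a scalar (a Volterra-type perturbation) and $w(S(\sigma))$ is compact as soon as $w$ tends to $0$ at the atom — and the heart of the argument is to show, in general, that the measure can be localized and that a nonzero $H^\infty$ function can be made to have vanishing nontangential boundary values on the residual carrier, and that the resulting image in the Calkin algebra is genuinely $0$ rather than merely quasinilpotent. It is precisely at this step that the compactness of the defect operators (and not merely quasisimilarity to a Jordan model) has to be exploited.
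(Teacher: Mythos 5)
First, a point of reference: the paper does not prove this statement at all --- it is imported wholesale from \cite{moore1974} --- so there is no internal proof to compare against, and your proposal has to stand on its own as a complete argument. It does not, although the part you do carry out is essentially right. In the case where $\theta$ has a zero $\lambda\in\D$, your argument works: $T-\lambda$ is Fredholm because $\pi(T)$ is unitary in the Calkin algebra, so $\ker(T-\lambda)^m=\ker b_\lambda^m(T)$ is finite dimensional, and the idempotent $u(T)$ you build is a non-zero finite-rank operator. Two remarks there. The Bezout identity $b_\lambda^m H^\infty+\phi H^\infty=H^\infty$ is not automatic for relatively prime inner functions; it holds here because $b_\lambda^m$ is a \emph{finite} Blaschke product and $\phi(\lambda)\neq 0$, so the corona condition $\inf_{z\in\D}(|b_\lambda^m(z)|+|\phi(z)|)>0$ is satisfied --- say so. And the whole idempotent construction is heavier than necessary: $u=\theta/b_\lambda$ already satisfies $b_\lambda(T)u(T)=0$, hence $\ran u(T)\subset\ker(T-\lambda)$, which is finite dimensional, and $u(T)\neq 0$ since $\theta$ does not divide $u$.

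The genuine gap is the purely singular case, which is precisely where the content of the theorem lies, and your treatment of it fails at three separate points. (a) You need compactness of the defects to pass from $T$ to its Jordan model $J$; Theorem \ref{t-compactdefect} only covers the trace-class ideal, and the compact-ideal version is a separate (true but nontrivial) Bercovici--Voiculescu-type fact that must be cited or proved. (b) The transfer back from $J$ to $T$ does not do what you need: if $X,Y$ are the intertwining quasiaffinities and $u(J)$ is compact, then $Yu(J)X=u(T)YX$ is a non-zero compact element of $\{T\}'$, but it is not of the form $v(T)$, and compactness of $u(T)YX$ with $YX$ merely a quasiaffinity does \emph{not} imply compactness of $u(T)$ (consider $AB$ compact with $A$ compact injective and $B=I$). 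The ``little extra bookkeeping'' you invoke to upgrade this to an operator literally of the form $u(T)$ is exactly the step that is missing. (c) Most importantly, the core construction --- a function $u\in H^\infty\setminus\sigma H^\infty$ with $u(S(\sigma))$ compact for an arbitrary singular inner function $\sigma$ --- is only described as a plan (``this is where I expect the work to lie''), and your own description correctly identifies the hard point: showing that the image in the Calkin algebra is genuinely $0$ rather than a quasinilpotent. Until that is done, the theorem is proved only for operators whose minimal function has a zero in $\D$.
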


We now achieve the desired result. In fact, we only require $I-T^*T$ and $I-TT^*$ to be compact, as opposed to trace class.

\begin{corollary}\label{c-idbdry}
If $T\in B(\hil)$ is an operator of class $C_0$ such that $I-T^*T$
and $I-TT^*$ are compact and $\{T\}'$ is irreducible, then the
identity representation of $C^*(\{T\}')$ is a boundary
representation for $\{T\}'$.
\end{corollary}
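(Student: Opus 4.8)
The plan is to apply Arveson's boundary theorem (Theorem \ref{t-bdryrep}) to the subalgebra $\A = \{T\}'$ of $B(\hil)$, which is irreducible and unital by hypothesis. Two conditions must be verified: first, that $C^*(\{T\}')$ contains $\kil(\hil)$, and second, that the quotient map $q: B(\hil) \to B(\hil)/\kil(\hil)$ fails to be completely isometric on $\{T\}'$.

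For the first condition, I would invoke Lemma \ref{l-repCT}(i). Since $T$ is of class $C_0$, it is completely non-unitary and hence not unitary; since $I - T^*T$ and $I - TT^*$ are compact by assumption, and $\{T\}'$ is irreducible, Lemma \ref{l-repCT}(i) tells us that the closed ideal of $C^*(\{T\}')$ generated by $I - T^*T$ and $I - TT^*$ equals $\kil(\hil)$. In particular $\kil(\hil) \subseteq C^*(\{T\}')$, which is exactly what the boundary theorem requires.

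For the second condition, the idea is to exhibit an element of $\{T\}'$ whose image in the Calkin algebra has strictly smaller norm (even just at the level of the operator norm, not the matrix norms). Here Theorem \ref{t-compcomm} is the key input: since $I - T^*T$ and $I - TT^*$ are compact, there is a function $u \in H^\infty$ with $u(T)$ a non-zero compact operator. We may assume $\|u(T)\| = 1$ after rescaling. Now $u(T) \in H^\infty(T) \subseteq \{T\}'$, and since $u(T)$ is compact and non-zero, $q(u(T)) = 0$ in the Calkin algebra while $\|u(T)\| = 1 > 0 = \|q(u(T))\|$. Thus $q$ is not even isometric on $\{T\}'$, a fortiori not completely isometric. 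Arveson's boundary theorem then yields that the identity representation of $C^*(\{T\}')$ is a boundary representation for $\{T\}'$, completing the proof.

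The main (and essentially only) obstacle is having Theorem \ref{t-compcomm} available: without the existence of a non-zero compact operator in $H^\infty(T)$, one would have no reason to expect $q$ to be non-isometric on $\{T\}'$, since a priori the commutant could consist entirely of operators that are essential norm-attaining. Everything else is a direct assembly of Lemma \ref{l-repCT}(i), Theorem \ref{t-compcomm}, and Theorem \ref{t-bdryrep}; no delicate estimates are needed.
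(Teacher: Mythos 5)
Your proposal is correct and follows exactly the paper's own argument: Lemma \ref{l-repCT}(i) gives $\kil(\hil)\subseteq C^*(\{T\}')$, Theorem \ref{t-compcomm} supplies a non-zero compact $u(T)\in\{T\}'$ killed by the quotient map, and Theorem \ref{t-bdryrep} concludes. Your added remarks (that $T$ is not unitary since it is of class $C_0$, and that non-isometric already implies non-completely-isometric) are correct and merely make explicit what the paper leaves implicit.
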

\begin{proof}
First, we see that $C^*(\{T\}')$ contains $\kil(\hil)$ by virtue of
Lemma \ref{l-repCT} (i). Moreover, by Theorem \ref{t-compcomm} there
exists a non-zero compact operator of the form $u(T)$ for some $u\in
H^\infty$. This operator necessarily commutes with $T$ and
$q(u(T))=0$, and thus Theorem \ref{t-bdryrep} completes the proof.
\end{proof}

The following result is Theorem 1.2 of \cite{arveson1969}. It is the
key to obtaining $*$-representa\-tions of $C^*(\{ T\}')$.

\begin{theorem}\label{t-pullbackisomet}
Let $\A\subset B(\hil_1), \B\subset B(\hil_2)$ be unital subalgebras and let $$\phi:\A\to \B$$ be a unital completely isometric
algebra isomorphism. Let $\pi_{\B}$ be a $*$-representa\-- tion of
$C^*(\B)$ which is a boundary
representation for $\B$. Then, there exists a $*$-representation
$\pi_{\A}$ of $C^*(\A )$ which is
a boundary representation for $\A$ and such that
$\pi_{\B}\circ\phi=\pi_{\A}$ on $\A$.
\end{theorem}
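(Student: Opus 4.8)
The plan is to take $\pi_{\A}$ to be \emph{any} unital completely positive extension to $C^{*}(\A)$ of the map $\psi := \pi_{\B}\circ\phi \colon \A \to B(\hil_{0})$, where $\hil_{0}$ denotes the Hilbert space on which $\pi_{\B}$ acts, and then to show that the hypotheses force every such extension to be an irreducible $*$-representation with the unique extension property relative to $\A$. Since $\phi$ is a completely isometric isomorphism and $\pi_{\B}$ is a $*$-representation, $\psi$ is a unital completely contractive homomorphism, so by the standard extension property of unital completely contractive maps on operator algebras (see \cite{arveson1969} or \cite{paulsen2002}) it admits at least one unital completely positive extension $\rho\colon C^{*}(\A)\to B(\hil_{0})$. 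This produces a candidate; the substantive point is that every such $\rho$ is automatically a $*$-representation.

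To prove that, I would fix a Stinespring dilation $\rho(\cdot)=V^{*}\sigma(\cdot)V$ with $\sigma\colon C^{*}(\A)\to B(\mathcal{L})$ a $*$-representation and $V\colon\hil_{0}\to\mathcal{L}$ an isometry. The composition $\sigma|_{\A}\circ\phi^{-1}\colon\B\to B(\mathcal{L})$ is again a unital completely contractive homomorphism, hence extends to a unital completely positive map $\Sigma\colon C^{*}(\B)\to B(\mathcal{L})$ satisfying $\Sigma(\phi(a))=\sigma(a)$ for all $a\in\A$. Compressing $\Sigma$ back to $V\hil_{0}$ gives, for $b\in\B$, the identity $V^{*}\Sigma(b)V=V^{*}\sigma(\phi^{-1}(b))V=\rho(\phi^{-1}(b))=\psi(\phi^{-1}(b))=\pi_{\B}(b)$, so $x\mapsto V^{*}\Sigma(x)V$ is a unital completely positive extension of $\pi_{\B}|_{\B}$ to $C^{*}(\B)$. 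Because $\pi_{\B}$ is a boundary representation for $\B$, this extension must equal $\pi_{\B}$ on all of $C^{*}(\B)$. Using that $\pi_{\B}$ is multiplicative, one then gets, for every $x\in C^{*}(\B)$ and $\xi\in\hil_{0}$, the equality $\|(I-VV^{*})\Sigma(x)V\xi\|^{2}=\langle\pi_{\B}(x^{*}x)\xi,\xi\rangle-\|\pi_{\B}(x)\xi\|^{2}=0$, so $V\hil_{0}$ is invariant under $\Sigma$ and, $\Sigma$ being a $*$-representation, reducing for it. Since $\A$ generates $C^{*}(\A)$ as a $C^{*}$-algebra and $\sigma(a)=\Sigma(\phi(a))$, $\sigma(a)^{*}=\Sigma(\phi(a)^{*})$ for $a\in\A$, it follows that $V\hil_{0}$ also reduces $\sigma$. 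Hence $\rho(\cdot)=V^{*}\sigma(\cdot)V$ is unitarily equivalent to the restriction of $\sigma$ to the reducing subspace $V\hil_{0}$, and in particular $\rho$ is a $*$-representation.

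It remains to assemble these observations. Let $\pi_{\A}$ be a unital completely positive extension of $\psi$ to $C^{*}(\A)$; by the previous paragraph it is a $*$-representation, and by construction $\pi_{\A}=\pi_{\B}\circ\phi$ on $\A$. It is irreducible because $\pi_{\A}(C^{*}(\A))$ is the $C^{*}$-algebra generated by $\pi_{\A}(\A)=\pi_{\B}(\B)$, which coincides with $\pi_{\B}(C^{*}(\B))$ and therefore has trivial commutant since $\pi_{\B}$ is irreducible. Finally, $\pi_{\A}$ has the unique extension property: if $\mu\colon C^{*}(\A)\to B(\hil_{0})$ is unital completely positive with $\mu|_{\A}=\pi_{\A}|_{\A}$, then by the previous paragraph $\mu$ is a $*$-representation, and two $*$-representations of $C^{*}(\A)$ that agree on $\A$ agree on the $*$-subalgebra it generates, hence on all of $C^{*}(\A)$ by continuity; thus $\mu=\pi_{\A}$. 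So $\pi_{\A}$ is a boundary representation for $\A$ with $\pi_{\B}\circ\phi=\pi_{\A}$ on $\A$. The one genuinely substantive step is the middle paragraph, where multiplicativity of the extension has to be \emph{derived} rather than assumed: this is precisely where the boundary property of $\pi_{\B}$ is consumed, transported across $\phi$ and lifted to the dilation space of $\rho$; everything else is routine.
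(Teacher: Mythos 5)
Your argument is correct, and it is essentially Arveson's original proof of this result (the paper gives no proof of its own, citing Theorem~1.2 of \cite{arveson1969}; your route --- extend $\pi_{\B}\circ\phi$ to a UCP map, dilate, transport the boundary property of $\pi_{\B}$ across $\phi$ to the dilation, and conclude that the dilating subspace is reducing so that every UCP extension is multiplicative --- is exactly the standard one). Two small slips worth fixing: the displayed identity $\|(I-VV^{*})\Sigma(x)V\xi\|^{2}=\langle\pi_{\B}(x^{*}x)\xi,\xi\rangle-\|\pi_{\B}(x)\xi\|^{2}$ is really only a ``$\leq$'' coming from the Schwarz inequality $\Sigma(x)^{*}\Sigma(x)\leq\Sigma(x^{*}x)$ for UCP maps (which still forces it to vanish); and $\Sigma$ is not a $*$-representation but merely UCP, so the passage from invariance to reducibility of $V\hil_{0}$ should instead invoke the facts that $C^{*}(\B)$ is self-adjoint and that UCP maps satisfy $\Sigma(x^{*})=\Sigma(x)^{*}$.
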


Finally, we come to the main result of this section.

\begin{theorem}\label{t-bdryue}
Let $T_1\in B(\hil_1)$ be a weak contraction of class $C_0$ with the
property that $\{T_1\}'$ is irreducible. Let $T_2\in B(\hil_2)$ be another operator of class $C_0$
which is quasisimilar to $T_1$ and with the property that $\{T_2\}'$ is irreducible. Assume that there
exists a completely isometric isomorphism 
$$
\phi:\{T_1\}'\to \{T_2\}'
$$
such that $\phi(T_1)=T_2$. Then, $T_1$ and $T_2$ are
unitarily equivalent.
\end{theorem}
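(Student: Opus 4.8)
The plan is to combine the machinery already assembled in this section. The strategy is to first produce a $*$-representation of $C^*(\{T_1\}')$ whose restriction to $\{T_1\}'$ agrees with $\phi$ (followed by the identity representation of $C^*(\{T_2\}')$), and then feed this representation into Corollary \ref{c-C0rep}.

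First I would observe that $T_1$, being a weak contraction, satisfies $I-T_1^*T_1 \in \mathcal{C}_1$, and by Theorem \ref{t-compactdefect}(i) (applied to $T_1^*$, which has the same commutant irreducibility is not needed here, only compactness of defects) the operator $I-T_1T_1^*$ is also of trace class, hence both defects are compact. Therefore Corollary \ref{c-idbdry} applies to $T_1$ and tells us that the identity representation $\mathrm{id}_1$ of $C^*(\{T_1\}')$ is a boundary representation for $\{T_1\}'$. Similarly $T_2$ is quasisimilar to $T_1$, so they share a Jordan model, and by Theorem \ref{t-compactdefect}(ii) $T_2$ is also a weak contraction with compact defects; since $\{T_2\}'$ is irreducible, Corollary \ref{c-idbdry} shows the identity representation $\mathrm{id}_2$ of $C^*(\{T_2\}')$ is a boundary representation for $\{T_2\}'$.

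Next I would apply Theorem \ref{t-pullbackisomet} with $\A = \{T_1\}'$, $\B = \{T_2\}'$, the completely isometric isomorphism $\phi$, and the boundary representation $\pi_{\B} = \mathrm{id}_2$. This yields a $*$-representation $\pi_{\A} =: \pi$ of $C^*(\{T_1\}')$ such that $\pi = \mathrm{id}_2 \circ \phi$ on $\{T_1\}'$. In particular $\pi(T_1) = \mathrm{id}_2(\phi(T_1)) = \phi(T_1) = T_2$, regarded as an operator on $\hil_2$. Thus $\pi : C^*(\{T_1\}') \to B(\hil_2)$ is a $*$-homomorphism carrying $T_1$ to $T_2$, which is precisely the hypothesis of Corollary \ref{c-C0rep}. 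Since $T_1$ is a weak contraction of class $C_0$ with irreducible commutant and $T_2$ is a $C_0$ operator quasisimilar to $T_1$, that corollary delivers the conclusion that $T_1$ and $T_2$ are unitarily equivalent.

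The only subtle point — and the step I would scrutinize most carefully — is the bookkeeping of unitality and the domains of the algebras: Theorem \ref{t-pullbackisomet} is stated for \emph{unital} subalgebras and \emph{unital} isometric isomorphisms, so one must check that $\{T_1\}'$ and $\{T_2\}'$ are unital (they contain $I$ as they are commutants) and that $\phi$ is automatically unital (a completely isometric algebra isomorphism between unital operator algebras preserves the identity). There is nothing deep here, but it must be noted. Beyond this, the argument is a clean concatenation of the preceding lemmas and corollaries, and no genuine obstacle arises.
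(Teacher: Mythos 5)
Your argument is correct and is essentially identical to the paper's proof: both use Theorem \ref{t-compactdefect} to get trace-class defects for $T_2$, Corollary \ref{c-idbdry} to make the identity representation of $C^*(\{T_2\}')$ a boundary representation, Theorem \ref{t-pullbackisomet} to pull back along $\phi$, and Corollary \ref{c-C0rep} to conclude. The extra remarks on unitality are a reasonable (if minor) point of care that the paper leaves implicit.
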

\begin{proof}
By Theorem \ref{t-compactdefect}, we see that $I-T_i^*T_i$ and $I-T_iT_i^*$ are of trace class for each $i=1,2$. In light of Corollary \ref{c-idbdry}, the identity representation
of $C^*(\{T_2\}')$ is a boundary representation for $\{T_2\}'$.
Therefore, we may apply Theorem \ref{t-pullbackisomet} to obtain a
$*$-representation $\pi: C^*(\{T_1\}')\to B(\hil_2)$ which
satisfies $\pi|\{T_1\}'=\phi$. An application of Corollary
\ref{c-C0rep} finishes the proof.
\end{proof}

We close this section by examining more closely the irreducibility
assumption appearing above. Obviously, the main interest of Theorem
\ref{t-bdryue} lies in the case where $T_2$ is the Jordan model of
$T_1$. In that case, the irreducibility assumption on $\{T_1\}'$ is
necessary to obtain unitary equivalence in view of the following
fact.

\begin{proposition}\label{p-Jcommirred}
If $J=\bigoplus_{\alpha}S(\theta_{\alpha})$ is a Jordan operator, then $\{J\}'$ is irreducible.
\end{proposition}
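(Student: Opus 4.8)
The plan is to show that the commutant $\{J\}'$, acting on $\hil = \bigoplus_\alpha H(\theta_\alpha)$, has no nontrivial reducing subspace, i.e. the only projections $P \in B(\hil)$ commuting with every operator in $\{J\}'$ are $0$ and $I$. Equivalently, I want to show the von Neumann algebra generated by $\{J\}'$ is all of $B(\hil)$; since $\{J\}'$ contains $J$ and hence $P(J)$, and is closed under taking adjoints' commutants, it suffices to produce enough operators in $\{J\}'$ to conclude. The natural first step is to recall the known description of $\{J\}'$ for Jordan operators (from \cite{bercovici1988}): elements of $\{J\}'$ are given by matrices $(X_{\alpha\beta})$ where $X_{\alpha\beta}: H(\theta_\beta) \to H(\theta_\alpha)$ intertwines $S(\theta_\beta)$ and $S(\theta_\alpha)$, and such intertwiners are exactly the operators of the form $X_{\alpha\beta} = $ (compression to $H(\theta_\alpha)$ of multiplication by some $u \in H^\infty$) when $\theta_\alpha \mid \theta_\beta$, and $0$-adjacent structure otherwise; in particular each diagonal block carries the full algebra $\{S(\theta_\alpha)\}' = H^\infty(S(\theta_\alpha))$.

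The key steps, in order, are as follows. First, reduce to the single-block case: if $P$ reduces $\{J\}'$, then since each coordinate projection $P_\alpha$ onto $H(\theta_\alpha)$ lies in $\{J\}'$ (this needs $\theta_\alpha \mid \theta_\beta$ for the appropriate ordering, which holds by the divisibility hypothesis on $\Theta$), each $P_\alpha P P_\alpha$ commutes with $\{S(\theta_\alpha)\}' \supseteq \{S(\theta_\alpha)\}$, and also with its adjoint algebra; so on each block $P$ restricts to an operator commuting with $H^\infty(S(\theta_\alpha))$ and with $\{S(\theta_\alpha)\}'^*$. Second, invoke the fact that $S(\theta)$ is multiplicity-free (it has the cyclic vector $1 + \theta H^2$, the image of the constant function), together with the general principle that a multiplicity-free operator whose commutant is a maximal abelian... — no: rather, one shows directly that $P_\alpha$ itself is reducing only trivially by noting that any projection commuting with both $S(\theta)$ and $S(\theta)^*$ commutes with the generated von Neumann algebra, and then using that the relevant off-diagonal intertwiners link the blocks so $P$ cannot be block-diagonal with distinct pieces. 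Third, use the off-diagonal intertwiners: for $\alpha, \beta$ with $\theta_\alpha \mid \theta_\beta$, there is a nonzero $X \in \{J\}'$ supported in the $(\alpha,\beta)$ slot (compression of a suitable $u \in H^\infty$), and $XP = PX$ forces the block decomposition of $P$ to be ``the same'' across all blocks, while the diagonal analysis forces each diagonal entry to be a scalar; so $P$ is a scalar projection, hence $0$ or $I$.

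I expect the main obstacle to be the single-block base case: proving that a (not necessarily self-adjoint-closed) set of operators, namely $\{S(\theta)\}'$ together with enough of its structure, generates $B(H(\theta))$ as a von Neumann algebra — i.e. that $S(\theta)$ together with the algebra $H^\infty(S(\theta))$ is irreducible in the $C^*$-sense. The clean way to handle this is to use Corollary \ref{c-idbdry} or its underpinnings: $S(\theta)$ is a weak contraction of class $C_0$ (indeed $H(\theta)$ need not be finite-dimensional, but when $\theta$ is a Blaschke product or more generally one reduces via the cyclic vector), and the essential point is that $\{S(\theta)\}'$ is already known to be irreducible — this is the scalar case of the very proposition being proved, and it is classical that the commutant of $S(\theta)$ (being $H^\infty(S(\theta))$, a maximal abelian algebra modulo the natural identification) has trivial reducing subspaces because $S(\theta)$ is a cyclic operator with $S(\theta)^*$ cyclic as well. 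Concretely, a projection commuting with every $u(S(\theta))$ and every $u(S(\theta))^*$ commutes with $S(\theta)$ and $S(\theta)^*$, hence with the von Neumann algebra they generate; and since $1 + \theta H^2$ is cyclic for $S(\theta)$ while $\bar z \cdot(\text{its companion})$ is cyclic for $S(\theta)^*$, that von Neumann algebra is $B(H(\theta))$. Assembling the block argument on top of this is then bookkeeping with the intertwiner description of $\{J\}'$.
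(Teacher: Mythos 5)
Your overall strategy is the one the paper uses: put the coordinate projections $P_\alpha$ (and the compressions $P_\alpha JP_\alpha$) into $\{J\}'$ to block-diagonalize any commuting projection, invoke irreducibility of a single Jordan block to force each diagonal piece to be $0$ or the identity, and then use off-diagonal intertwiners in $\{J\}'$ to tie the blocks together. However, your justification of the base case is not valid. You argue that since $S(\theta)$ and $S(\theta)^*$ each have a cyclic vector, the von Neumann algebra they generate must be all of $B(H(\theta))$. That implication is false: a self-adjoint operator with simple spectrum on $\C^2$ (say $\mathrm{diag}(1,2)$) is cyclic, as is its adjoint, yet it generates an abelian von Neumann algebra with nontrivial projections. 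Cyclicity of $T$ and $T^*$ says nothing about irreducibility. The irreducibility of $S(\theta)$ is indeed classical and the paper simply cites it, which is fine; but if you want to prove it, a correct route is to note that a reducing subspace is invariant for both $S(\theta)$ and $S(\theta)^*$, hence is simultaneously of the form $\phi_2 H^2\ominus\theta H^2$ (every element divisible by the inner function $\phi_2$) and of the form $H(\phi_1)$, and these two descriptions are incompatible unless the subspace is $0$ or $H(\theta)$.

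The off-diagonal step also glosses over the point that carries the actual content. Knowing that some nonzero $X\in\{J\}'$ is supported in the $(\alpha_0,\gamma)$ slot and that $XM_\gamma\subset M_{\alpha_0}=0$ only rules out $M_\gamma=H(\theta_\gamma)$ if you check that $X$ does not annihilate all of $H(\theta_\gamma)$. This is exactly what the paper verifies: for $\gamma<\alpha_0$ it takes $X=P_{H(\theta_{\alpha_0})}|H(\theta_\gamma)$, whose range is all of $H(\theta_{\alpha_0})$, and for $\gamma>\alpha_0$ it takes $X=P_{H(\theta_{\alpha_0})}(\theta_{\alpha_0}/\theta_\gamma)(S)|H(\theta_\gamma)$ and checks its range is $(\theta_{\alpha_0}/\theta_\gamma)(S)H(\theta_\gamma)\neq 0$. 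Relatedly, your description of the intertwiners as nonzero only ``when $\theta_\alpha$ divides $\theta_\beta$'' is off: by commutant lifting there are nonzero intertwiners in both directions whenever one of the two inner functions divides the other, which for a Jordan operator is always the case; the two divisibility directions just produce intertwiners of different forms, which is why the paper treats $\gamma<\alpha_0$ and $\gamma>\alpha_0$ separately. With the base case properly justified (or cited) and the nonvanishing of the intertwiners checked, your argument becomes the paper's proof.
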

\begin{proof}
Set $\hil=\bigoplus_{\alpha}H(\theta_{\alpha})$. Let $M\subset \hil$ be a proper reducing
subspace for $\{J\}'$. Let
$P_{\alpha}$ denote the orthogonal projection of $\hil$ onto the $H(\theta_{\alpha})$ component. Since $P_{\alpha}$ commutes with $J$ for every $\alpha$, we see that $M$ is reducing for each $P_{\alpha}$ and hence it can be written as
$M=\bigoplus_{\alpha}M_{\alpha}$. Now, the operator
$P_{\alpha}JP_{\alpha}$ also commutes with $J$, whence each $M_{\alpha}$ is reducing for $P_{\alpha}JP_{\alpha}$. Since this operator is unitarily equivalent to $S(\theta_{\alpha})$ and Jordan blocks are known to be irreducible, we must have either
$M_{\alpha}=P_{\alpha}\hil$ or $M_{\alpha}=0$. We proceed to
show that each $M_{\alpha}$ must be equal to $0$. For the rest of the proof, for each $\alpha$ we identify $P_{\alpha}\hil$ with $H(\theta_{\alpha})$.

Since we assume that $M\neq \hil$, we must have $M_{\alpha_0}=0$ for
some $\alpha_0$. Now, any operator $X$ acting on $\hil$ may be written as
$X=(X_{\alpha \beta})_{\alpha,\beta}$, where 
$$
X_{\alpha
\beta}:H(\theta_{\beta})\to H(\theta_{\alpha}).
$$
If
$\gamma<\alpha_0$, consider the operator $Y(\gamma)$ defined by
$Y(\gamma)_{\alpha_0
\gamma}=P_{H(\theta_{\alpha_0})}|H(\theta_{\gamma})$ and
$Y(\gamma)_{\alpha \beta}=0$ otherwise. It is easily verified that
$Y(\gamma)$ commutes with $J$ and thus  $Y(\gamma)M\subset M$,
which in turn implies that
$$
P_{H(\theta_{\alpha_0})}M_{\gamma}\subset M_{\alpha_0}=0.
$$
This forces $M_{\gamma}$ to be equal to $0$, since the other possibility $M_{\gamma}=H(\theta_{\gamma})$ is impossible:
$$
P_{H(\theta_{\alpha_0})}H(\theta_{\gamma})
=H(\theta_{\alpha_0})\neq 0.
$$
Therefore,  $M_{\gamma}=0$ whenever $\gamma<\alpha_0$. Assume now that $\gamma>\alpha_0$ and
consider the operator $Z(\gamma)$ defined by
$$
Z(\gamma)_{\alpha_0 \gamma}=P_{H(\theta_{\alpha_0})}\left(\theta_{\alpha_0}/
\theta_{\gamma}\right)(S)|H(\theta_{\gamma})
$$
and $Z(\gamma)_{\alpha \beta}=0$ otherwise. It is easily verified that $Z(\gamma)$ commutes with $J$ and thus  $Z(\gamma)M\subset M$, which in turn implies that
$$
P_{H(\theta_{\alpha_0})}(\theta_{\alpha_0}/
\theta_{\gamma})(S)M_{\gamma}\subset
M_{\alpha_0}=0.
$$
This forces $M_{\gamma}$ to be equal to $0$, since the other possibility $M_{\gamma}=H(\theta_{\gamma})$ is impossible:
$$
P_{H(\theta_{\alpha_0})}(\theta_{\alpha_0}/
\theta_{\gamma})(S)H(\theta_{\gamma})
=(\theta_{\alpha_0}/
\theta_{\gamma})(S)H(\theta_{\gamma})\neq 0.
$$
Thus, $M_{\gamma}=0$ for every $\gamma>\alpha_0$, and the proof is complete.
\end{proof}

We obtain a simpler version of Theorem \ref{t-bdryue} that applies to Jordan operators.

\begin{corollary}\label{c-bdryue}
Let $T\in B(\hil)$ be a weak contraction of class $C_0$ with the
property that $\{T\}'$ is irreducible. Let $J$ be the Jordan model of $T$. Assume that there
exists a completely isometric isomorphism 
$$
\phi:\{T\}'\to\{J\}'
$$
such that $\phi(T)=J$. Then, $T$ and $J$ are
unitarily equivalent.
\end{corollary}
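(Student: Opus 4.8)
The plan is to deduce this directly from Theorem \ref{t-bdryue}, taking $T_1=T$ and $T_2=J$. Essentially all the hypotheses are already among the standing assumptions; the only point requiring comment is that $J$ satisfies the conditions imposed on $T_2$ in that theorem, namely that $J$ is an operator of class $C_0$, that it is quasisimilar to $T$, and that $\{J\}'$ is irreducible.

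First I would observe that the Jordan model $J$ is by definition a Jordan operator, hence of class $C_0$, and that by Theorem \ref{t-classification} it is quasisimilar to $T$. Next, the irreducibility of $\{J\}'$ is exactly the content of Proposition \ref{p-Jcommirred}. Together with the hypotheses that $T$ is a weak contraction of class $C_0$ with $\{T\}'$ irreducible and that $\phi:\{T\}'\to\{J\}'$ is a completely isometric isomorphism with $\phi(T)=J$, this places us precisely in the setting of Theorem \ref{t-bdryue}.

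Invoking Theorem \ref{t-bdryue} then gives that $T$ and $J$ are unitarily equivalent, which is the assertion. I do not expect any genuine obstacle: the corollary is simply Theorem \ref{t-bdryue} repackaged in the case of principal interest, with Proposition \ref{p-Jcommirred} used to discharge automatically the irreducibility requirement on the commutant of the second operator.
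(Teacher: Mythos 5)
Your proposal is correct and is exactly the paper's argument: the corollary is obtained by applying Theorem \ref{t-bdryue} with $T_1=T$ and $T_2=J$, using Proposition \ref{p-Jcommirred} to supply the irreducibility of $\{J\}'$ (and Theorem \ref{t-classification} for the quasisimilarity). No differences to report.
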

\begin{proof}
Simply combine Theorem \ref{t-bdryue} and Proposition \ref{p-Jcommirred}.
\end{proof}

Finally, we show that for certain minimal functions, the
irreducibility of $\{T\}'$ is automatic. We first need two
preliminary facts. The first one is from \cite{bercovici1975}.

\begin{theorem}\label{t-doublecomm}
Let $T$ be an operator of class $C_0$ with minimal function $\theta$
and let $X\in \{T\}''$. Then, there exists a function $v\in H^\infty$
with the property that $v$ has no non-constant common inner divisor
with $\theta$ and that $Xv(T)=u(T)$ for some function $u\in
H^\infty$.
\end{theorem}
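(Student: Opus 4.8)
The plan is to pin $X$ down by its action on one well-chosen vector and then transport the resulting identity to the Jordan model of $T$, where the analogue of the statement can be handled by hand. First, recall that every operator $T$ of class $C_0$ has a vector $c$ that is cyclic for its commutant, i.e.\ $\bigvee\{Yc:Y\in\{T\}'\}=\hil$; such a vector is immediate on the Jordan model (take a cyclic vector of a single block and pad it with zeros in the others) and is pulled back to $\hil$ through the quasiaffinities supplied by Theorem~\ref{t-classification}. Any such $c$ is automatically \emph{maximal}: if $\theta_c$ denotes the inner function with $\{w\in H^\infty:w(T)c=0\}=\theta_cH^\infty$, then $\theta_c(T)Yc=Y\theta_c(T)c=0$ for every $Y\in\{T\}'$, so $\theta_c(T)=0$ by cyclicity and hence $\theta\mid\theta_c$, forcing $\theta_c=\theta$. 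Fix such a $c$, put $w:=Xc$ and $\K:=\bigvee_{n\ge0}T^nc$; since $c$ is maximal, $T|\K$ is multiplicity-free of class $C_0$ with minimal function $\theta$, and $u\mapsto u(T)c$ maps $H^\infty$ onto a dense linear submanifold of $\K$ with kernel exactly $\theta H^\infty$. As $X\in\{T\}''$ commutes with every $Y\in\{T\}'$ we have $X(Yc)=Yw$, so $X$ is determined by $w$, and for $u,v\in H^\infty$ the operator identity $Xv(T)=u(T)$ holds on all of $\hil$ as soon as it holds at $c$, i.e.\ as soon as $v(T)w=u(T)c$ (using $v(T)\in\{T\}'$). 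Thus it suffices to find $v\in H^\infty$ having no non-constant common inner divisor with $\theta$, and $u\in H^\infty$, with $v(T)\,w=u(T)\,c$; equivalently, to push $v(T)w$ into the \emph{range} (not merely the closure $\K$) of $u\mapsto u(T)c$.

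For the second step, let $J=\bigoplus_\alpha S(\theta_\alpha)$ be the Jordan model of $T$, acting on $\hil_J$, with $\theta_0=\theta$, and fix quasiaffinities $j\colon\hil\to\hil_J$, $k\colon\hil_J\to\hil$ with $jT=Jj$ and $Tk=kJ$. A routine computation shows $jXk\in\{J\}'$ and that it commutes with the quasiaffinity $jk\in\{J\}'$; this last point is the only place where the hypothesis $X\in\{T\}''$ (rather than merely $X\in\{T\}'$) is used, through $kj\in\{T\}'$ and $X(kj)=(kj)X$. Now two structural facts about Jordan operators enter: (1) $\{J\}''=\{u(J):u\in H^\infty\}$, since commuting with the coordinate projections and with the canonical surjective intertwiners between blocks forces the (automatically diagonal) entries of an element of $\{J\}''$ to be given by a single $H^\infty$ function modulo $\theta$; and (2) every element of $\operatorname{Hom}(S(\theta_\beta),S(\theta_\alpha))$ is the compression to $H(\theta_\alpha)$ of multiplication by $\bigl(\theta_\alpha/\gcd(\theta_\alpha,\theta_\beta)\bigr)g$ for some $g\in H^\infty$. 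Using (1), (2) and the fact that the $\theta_\alpha$ are totally ordered under divisibility, one aims to clear from $jXk$ a denominator $v$ coprime to $\theta$ so that $(jXk)\,v(J)=u(J)\,(jk)$ for some $u\in H^\infty$. This is exactly what is needed: rewriting the left-hand side as $j\bigl(Xv(T)\bigr)k$ (via $kv(J)=v(T)k$) and the right-hand side as $j\bigl(u(T)\bigr)k$, and using that $k$ has dense range and $j$ is injective, we conclude $Xv(T)=u(T)$, hence in particular $v(T)w=u(T)c$.

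The step I expect to be the main obstacle is precisely the control of the denominator $v$ in the displayed intermediate identity. A denominator is genuinely unavoidable, since $j$ and $k$ are non-invertible; yet the natural first candidate — the minimal function of the operator that $T$ induces on the quotient $\hil/\K$, which does send $w$ into $\K$ — is a \emph{divisor} of $\theta$ and therefore has a non-constant common inner divisor with $\theta$, the exact opposite of the requirement. One must instead manufacture a single $v$ that is at once coprime to $\theta$, moves $w$ into the range of $u\mapsto u(T)c$ (turning the limit implicit in $w\in\K$ into an attained value), and is admissible across all the blocks $\theta_\alpha$ simultaneously. Carrying out this bookkeeping inside $\{J\}'$ — comparing $jk$ with $jXk$ modulo inner functions coprime to $\theta$, via the explicit intertwiner form in (2) and the divisibility chain among the $\theta_\alpha$ — is the technical heart of the argument.
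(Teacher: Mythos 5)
The paper does not prove this statement at all --- it is quoted from \cite{bercovici1975} as a known preliminary fact --- so there is no internal proof to compare with. Judged on its own terms, your proposal is a reduction rather than a proof: the entire content of the theorem is the existence of a denominator $v$ \emph{coprime} to $\theta$, and that is exactly the step you label ``the technical heart'' and leave undone. Reformulating the goal as $v(T)Xc=u(T)c$ for a $\{T\}'$-cyclic vector $c$, or as $(jXk)\,v(J)=u(J)\,(jk)$ on the Jordan model, does not by itself make it more tractable; as you yourself observe, the only denominator that comes for free (the minimal function of the compression of $T$ to $\hil\ominus\K$) is an inner divisor of $\theta$ and hence disqualified, and no mechanism is exhibited that actually produces a $v$ with $v\wedge\theta=1$.

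Two further points in the outline do not hold up as written. First, the existence of a vector cyclic for $\{T\}'$ is only verified on the Jordan model; transporting it back to $\hil$ requires showing $\bigvee\{Ykc:Y\in\{T\}'\}=\hil$, and the natural attempt reduces this to the claim that $(jk)c$ is still $\{J\}'$-cyclic, i.e.\ that a quasiaffinity lying in the commutant preserves commutant-cyclicity --- an assertion that itself needs proof. Second, the appeal to $\{J\}''=\{u(J):u\in H^\infty\}$ is not available: you only establish that $jXk$ commutes with the single operator $jk$, which does not place $jXk$ in $\{J\}''$. And even if one had $jXk=u(J)$, cancelling $j$ would yield $X(kj)=u(T)$ with $kj$ a quasiaffinity in $\{T\}'$ that need not be of the form $v(T)$ --- which is precisely the difficulty the theorem asks you to overcome. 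In short, the framework (localize at a commutant-cyclic vector, pass to the Jordan model) is reasonable, but the proof of the actual assertion is missing.
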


The following is Proposition 2.4.9 of \cite{bercovici1988}.

\begin{lemma}\label{l-qa}
Let $u\in H^\infty$ and $T\in B(\hil)$ be an operator of class $C_0$
with minimal function $\theta$. Then, $u(T)$ is a quasiaffinity if
and only if $u$ and $\theta$ have no non-constant common inner
divisor.
\end{lemma}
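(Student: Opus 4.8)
The plan is to prove the equivalence by treating separately the two defining properties of a quasiaffinity, namely injectivity of $u(T)$ and density of its range. Let $d$ be the greatest common inner divisor of the inner part of $u$ and of $\theta$; such a divisor exists in the lattice of inner functions, and the hypothesis that $u$ and $\theta$ have no non-constant common inner divisor simply says that $d$ is a unimodular constant. I will establish that $u(T)$ is injective if and only if $d$ is constant, and likewise that $u(T)$ has dense range if and only if $d$ is constant; combining the two halves yields the lemma.

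For the ``only if'' direction, suppose $d$ is non-constant and write $\theta=d\theta_{1}$, $u=du_{1}$ with $\theta_{1}$ inner and $u_{1}\in H^{\infty}$, so that $u(T)=d(T)u_{1}(T)=u_{1}(T)d(T)$ by multiplicativity of the functional calculus. From $\theta(T)=0$ and $\theta=d\theta_{1}$ we get $\ran\theta_{1}(T)\subseteq\ker d(T)\subseteq\ker u(T)$, and $\theta_{1}(T)\neq 0$ because $\theta_{1}\notin\theta H^{\infty}=\ker\Phi$: indeed, $\theta_{1}=\theta g$ would give $\theta=d\theta g$ and hence $d$ constant. Thus $\ker u(T)\neq\{0\}$. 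In the same way $\ran u(T)\subseteq\ran d(T)\subseteq\ker\theta_{1}(T)$, which is a proper closed subspace of $\hil$, so $u(T)$ does not have dense range either; either conclusion shows $u(T)$ is not a quasiaffinity.

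For the ``if'' direction, assume $u$ and $\theta$ are coprime. To see that $u(T)$ is injective, put $N=\ker u(T)$: this subspace is invariant for $T$, so $T|_{N}$ is again of class $C_{0}$, with minimal function $\theta_{N}$ dividing $\theta$ (standard facts about class $C_{0}$; see \cite{bercovici1988}). Since the $H^{\infty}$ functional calculus is compatible with restriction to invariant subspaces, $u(T|_{N})=u(T)|_{N}=0$, whence $\theta_{N}$ also divides $u$; consequently $\theta_{N}$ divides $d$, is a unimodular constant, and so $N=\{0\}$, a $C_{0}$ operator with constant minimal function acting on the zero space. For density of the range I would pass to the adjoint: $u(T)^{*}=u^{\#}(T^{*})$ where $u^{\#}(z)=\overline{u(\bar z)}$, and $T^{*}$ is of class $C_{0}$ with minimal function $\theta^{\#}$; since $v\mapsto v^{\#}$ is an order-isomorphism of the lattice of inner functions, $u^{\#}$ and $\theta^{\#}$ are again coprime, so the injectivity statement already proven (applied to $T^{*}$) gives that $u(T)^{*}$ is injective, i.e. $u(T)$ has dense range. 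One could equally argue symmetrically with the $C_{0}$ operator induced on the quotient $\hil/\overline{\ran u(T)}$, whose minimal function divides both $\theta$ and $u$ and is therefore constant.

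The step I expect to demand the most care is the ``if'' direction. The tempting shortcut --- deducing invertibility of $u(T)$ from a B\'ezout-type identity $au+b\theta\equiv 1$ --- is unavailable, since coprimeness of $u$ and $\theta$ in $H^{\infty}$ is strictly weaker than the corona/Carleson condition that would produce such an identity. The argument above sidesteps this by working instead with the invariant subspace $\ker u(T)$ and the quotient modulo $\overline{\ran u(T)}$, leaning on the elementary structural facts that the minimal function of a restriction or a quotient of a $C_{0}$ operator divides that of the original operator, and that greatest common inner divisors exist and transform compatibly under the conjugation $v\mapsto v^{\#}$.
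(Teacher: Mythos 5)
Your proof is correct; the paper itself offers no argument for this lemma, simply citing it as Proposition 2.4.9 of \cite{bercovici1988}, and your proof is essentially the standard one given there: factor out the greatest common inner divisor for the ``only if'' direction, and for the ``if'' direction observe that the minimal function of $T$ restricted to the hyperinvariant subspace $\ker u(T)$ divides both $u$ and $\theta$, then handle density of the range by passing to $T^{*}$ and $u^{\#}$. You are also right to flag that a corona-type B\'ezout identity is unavailable here, which is exactly why the argument must go through invariant subspaces and minimal functions rather than through invertibility of $u(T)$.
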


We now show that if the inner divisors of the minimal function
$\theta$ satisfy a certain property, then the commutant $\{T\}'$ is
always irreducible. We recall that the double commutant of $T$, denoted by $\{T\}''$, is defined as the algebra of operators that commute with the commutant:
$$
\{T\}''=(\{T\}')'=\{A\in B(\hil): AX=XA \text{ for every} X\in \{T\}'\}.
$$

\begin{proposition}\label{p-Tcommirred}
Let $\theta\in H^\infty$ be an inner function with the property that
for every inner divisor $\phi$ of $\theta$, we have that $\phi$ and
$\theta/\phi$ have a non-constant common inner divisor, unless
$\phi=1$ or $\phi=\theta$. Let $T$ be an operator of class $C_0$
with minimal function $\theta$. Then, $\{T\}''$ contains no
idempotents besides $0$ and $I$, and $\{T\}'$ is irreducible.
\end{proposition}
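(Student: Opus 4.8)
The plan is to first establish that $\{T\}''$ contains no nontrivial idempotents, and then deduce irreducibility of $\{T\}'$ from this together with the fact that any self-adjoint projection in $\{T\}'$ that reduces $\{T\}'$ actually lies in $\{T\}''$. So I would begin with an arbitrary idempotent $E\in\{T\}''$ and show $E=0$ or $E=I$. Applying Theorem \ref{t-doublecomm} to $E$, I obtain $v\in H^\infty$ with no non-constant common inner divisor with $\theta$ and a function $u\in H^\infty$ with $Ev(T)=u(T)$. By Lemma \ref{l-qa}, $v(T)$ is a quasiaffinity. Now the subspace $\ran E$ is invariant for $T$ (since $E\in\{T\}''\subset\{T\}'$), so $T|\ran E$ is again of class $C_0$, and its minimal function $\phi$ is an inner divisor of $\theta$. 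The key observation is that $u(T)$ has range contained in $\ran E$ and kernel containing $\ker E$; one should compute that, as an operator related to $T|\ran E$, $u(T)|\ran E$ is a quasiaffinity onto $\ran E$ precisely when $u$ and $\phi$ have no common inner divisor, while simultaneously the relation $Ev(T)=u(T)$ together with surjectivity considerations ties $\phi$ to $v$ as well. More precisely, I expect to show that $\phi$ divides neither a proper part in a way compatible with the hypothesis: writing $\theta=\phi\cdot(\theta/\phi)$, the fact that $v(T)$ is a quasiaffinity and $Ev(T)=u(T)$ maps onto $\ran E$ forces $\phi$ and $\theta/\phi$ to have \emph{no} non-constant common inner divisor. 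By the hypothesis on $\theta$, this can only happen when $\phi=1$ or $\phi=\theta$, i.e. when $\ran E=0$ or $\ran E=\hil$, hence $E=0$ or $E=I$.

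For the irreducibility of $\{T\}'$, suppose $P$ is a nonzero orthogonal projection reducing $\{T\}'$, i.e. $P\in\{T\}'$ and $PX=XP$ for every $X\in\{T\}'$. But the latter condition says exactly that $P\in(\{T\}')'=\{T\}''$. Thus $P$ is an idempotent in $\{T\}''$, so by the first part $P=0$ or $P=I$, which is what irreducibility of $\{T\}'$ demands.

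The main obstacle is the middle step: extracting from the algebraic relation $Ev(T)=u(T)$ (with $v(T)$ a quasiaffinity) the precise conclusion that the minimal function $\phi$ of $T|\ran E$ and the complementary factor $\theta/\phi$ share no non-constant inner divisor. This requires understanding how $u(T)$ acts with respect to the decomposition induced by $E$ — its range lands in $\ran E$ and it annihilates a suitable complement — and then invoking the functional-model description of $C_0$ operators (in particular that for $S(\theta)$, the divisor structure of $\theta$ governs exactly which $u(T)$ are injective or have dense range, via Lemma \ref{l-qa} applied to the restriction and to a co-restriction). One should be careful that $\ran E$ need not be orthogonally complemented in a way compatible with $T$, so the argument is cleanest phrased in terms of quasisimilarity: $T$ is quasisimilar to $(T|\ran E)\oplus(T|\ker E)$ via $E$ and $I-E$, and the minimal function of $T$ is the least common multiple of the two minimal functions while its "determinant"/divisor data is their product, forcing $\phi$ and $\theta/\phi$ to be relatively coprime. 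Once that coprimality is in hand, the hypothesis on $\theta$ closes the argument immediately.
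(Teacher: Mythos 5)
Your reduction of the irreducibility statement to the absence of nontrivial idempotents in $\{T\}''$ is correct (an orthogonal projection reducing $\{T\}'$ is exactly a self-adjoint idempotent in $(\{T\}')'=\{T\}''$), and this matches the paper's intent. The first part, however, has a genuine gap, in fact two. The central claim --- that the minimal function $\phi$ of $T|\ran E$ and the complementary factor $\theta/\phi$ are coprime --- is never established; you flag it yourself as ``the main obstacle'' and support it only with a heuristic about determinant/divisor data of $(T|\ran E)\oplus(T|\ker E)$. But the proposition does not assume $T$ is a weak contraction, so determinant functions need not be available, and in any case a product formula for Jordan-model data does not by itself force the two minimal functions to be coprime. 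A telling symptom is that your argument never actually uses $E^2=E$ in a computation, whereas any correct proof must, since non-idempotent elements of $\{T\}''$ abound. Moreover, even granting coprimality, the final inference ``$\phi=\theta$ implies $\ran E=\hil$'' is false for general $C_0$ operators: a proper invariant subspace can carry minimal function equal to $\theta$ (the bijection between invariant subspaces and inner divisors, Theorem \ref{t-invsub}, requires $T$ to be multiplicity-free, which is not assumed here). So as written the argument does not close even in its optimistic reading.

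The paper's proof avoids subspaces entirely and works at the level of $H^\infty$: from $E^2=E$ and $Ev(T)=u(T)$ one computes $u(T)^2=u(T)v(T)$, hence $u^2-uv\in\theta H^\infty$; letting $\phi$ be the greatest common inner divisor of $u$ and $\theta$ and writing $u=\phi g$ with $g$ coprime to $\theta$, one finds that the greatest common inner divisor of $\phi$ and $\theta/\phi$ divides $gv$ and is therefore constant, so the hypothesis on $\theta$ gives $\phi=1$ or $\phi=\theta$. If $\phi=\theta$ then $Ev(T)=u(T)=0$ and $E=0$ because $v(T)$ has dense range (Lemma \ref{l-qa}); if $\phi=1$ then $u(T)$ is a quasiaffinity, hence so is $E$, and a closed-range idempotent quasiaffinity is invertible, forcing $E=I$. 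I suggest reworking your middle step along these functional lines; the invariant-subspace route would require substantially more machinery to make rigorous.
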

\begin{proof}
The second statement clearly follows from the first, so it suffices
to show that if $E\in \{T\}''$ satisfies $E^2=E$, then $E=I$ or
$E=0$. By Theorem \ref{t-doublecomm}, we see that there $Ev(T)=u(T)$
for some functions $u,v\in H^\infty$ where $v$ and $\theta$ have no non-constant
common inner divisor. We compute
\begin{align*}
u(T)^2&= E^2 v(T)^2\\
&=E v(T)^2\\
&=u(T)v(T)
\end{align*}
whence
$$
u^2-uv=\theta f
$$
for some $f\in H^\infty$. If we define $\phi\in H^\infty$ to be the
greatest common inner divisor of $u$ and $\theta$, we can write $
u=\phi g $ where $g$ and $\theta$ have no non-constant common inner divisor. Now,
we see that
$$
\phi g^2-gv=\frac{\theta}{\phi}f,
$$
which implies that the greatest common inner divisor of $\phi$ and
$\theta/\phi$ divides $gv$. By choice of $g$ and $v$ and by
assumption on $\theta$, we see that $\phi=1$ or $\phi=\theta$. If
$\phi=\theta$, then $u(T)=0$ and the equation
$$
0=u(T)=Ev(T)
$$
along with Lemma \ref{l-qa} implies that $E=0$. If $\phi=1$, then
$u(T)$ is a quasiaffinity by Lemma \ref{l-qa} which forces $E$ to be
a quasiaffinity as well, by virtue of the equation
$$
Ev(T)=v(T)E=u(T).
$$
But $E$ has closed range (being idempotent), and thus it must be invertible. The equation $E^2=E$ then yields $E=I$, and the proof is complete.
\end{proof}
A moment's thought reveals that an inner function $\theta$
satisfying the condition of the previous proposition must be of one of two
types: either a power of a Blaschke factor
$\theta(z)=(b_{\lambda}(z))^n$ or a singular inner function
associated to a point mass measure on the unit circle
$$
\theta(z)=\exp\left(t\frac{z+\zeta}{z-\zeta} \right)
$$
where $t>0$ and $|\zeta|=1$. In fact, these are the inner functions
whose inner divisors are completely ordered by divisibility (see Proposition 4.2.6 in \cite{bercovici1988}).
Moreover, Proposition \ref{p-Tcommirred} extends a recent result of Jiang and Yang (see \cite{yang2011}) which deals with the case of $T$ being a Jordan block $S(\theta)$. In this special case, the result holds under the weaker condition that the function $\theta$ does not admit a so-called corona decomposition. We now formulate another corollary of the main result of this section.

\begin{corollary}\label{c-singue}
Let $T_1\in B(\hil_1)$ be a weak contraction of class $C_0$ with minimal function $\theta$. Assume that the inner divisors of $\theta$ are completely ordered by divisibility.
Let $T_2\in B(\hil_2)$ be another operator of class $C_0$
which is quasisimilar to $T_1$. Assume that there
exists a completely isometric isomorphism 
$$
\phi:\{T_1\}'\to \{T_2\}'
$$
such that $\phi(T_1)=T_2$. Then, $T_1$ and $T_2$ are
unitarily equivalent.
\end{corollary}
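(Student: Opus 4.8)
The plan is to reduce the statement entirely to Theorem~\ref{t-bdryue}. Inspecting that theorem, the only hypotheses it requires beyond what is already granted in the corollary are that $\{T_1\}'$ and $\{T_2\}'$ be irreducible; everything else ($T_1$ a weak contraction of class $C_0$, $T_2$ of class $C_0$ quasisimilar to $T_1$, and $\phi\colon\{T_1\}'\to\{T_2\}'$ a completely isometric isomorphism with $\phi(T_1)=T_2$) is in the hypothesis verbatim. So the whole proof amounts to deriving those two irreducibility statements from Proposition~\ref{p-Tcommirred}.

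First I would note that the hypothesis ``the inner divisors of $\theta$ are completely ordered by divisibility'' is precisely the condition imposed on $\theta$ in Proposition~\ref{p-Tcommirred}: this is exactly the content of the discussion immediately following that proposition (equivalently, Proposition~4.2.6 in \cite{bercovici1988}), which identifies the inner functions satisfying the divisor condition of Proposition~\ref{p-Tcommirred} with those whose inner divisors form a chain. Consequently Proposition~\ref{p-Tcommirred} applies to $T_1$ and gives that $\{T_1\}'$ is irreducible.

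Next I would deal with $T_2$. Since $T_1$ and $T_2$ are quasisimilar operators of class $C_0$, Theorem~\ref{t-classification} forces them to have the same Jordan model, and in particular the same minimal function $\theta$. Hence $T_2$ is an operator of class $C_0$ whose minimal function has inner divisors completely ordered by divisibility, so Proposition~\ref{p-Tcommirred} applies to $T_2$ as well and $\{T_2\}'$ is irreducible. (One could also invoke Theorem~\ref{t-compactdefect} to record that $T_2$ is itself a weak contraction, but this is not needed for the argument.)

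At this point all the hypotheses of Theorem~\ref{t-bdryue} are in place for the pair $T_1$, $T_2$ together with the isomorphism $\phi$, and that theorem delivers the unitary equivalence of $T_1$ and $T_2$, finishing the proof. There is no genuine obstacle in this argument; the only point that deserves an explicit sentence of justification is the identification of the ``completely ordered by divisibility'' hypothesis with the divisor condition of Proposition~\ref{p-Tcommirred}, together with the observation that quasisimilarity transports the minimal function $\theta$ from $T_1$ to $T_2$ so that Proposition~\ref{p-Tcommirred} is applicable to both operators.
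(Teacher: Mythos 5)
Your proof is correct and follows the paper's own route exactly: the paper likewise deduces irreducibility of both commutants from Proposition~\ref{p-Tcommirred} (via the post-proposition discussion identifying the divisor condition with the divisors being completely ordered) and then invokes Theorem~\ref{t-bdryue}. Your explicit remark that quasisimilarity transfers the minimal function to $T_2$ is a detail the paper leaves implicit, but it is the intended reading.
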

\begin{proof}
This is a mere restatement of Theorem \ref{t-bdryue} using
Proposition \ref{p-Tcommirred} and the discussion that follows its
proof.
\end{proof}

\section{Unitary equivalence and maximality for Blaschke products}
In this section, we show that in the case where the minimal function $\theta$ of a multiplicity-free operator is a Blaschke product, we may replace the assumption on the existence of a completely isometric isomorphism appearing in the previous section by a condition on the norm of a single operator. In fact, we investigate the maximality condition appearing in
Theorem \ref{t-maxarveson} and we set out to prove Theorem
\ref{t-intromain2} which improves it significantly. The first step
is an estimate which, although completely elementary, is very useful
(the reader might want to compare it with Lemma 3.7 of
\cite{clouatre2013}).

\begin{lemma}\label{l-mobius}
Let $T\in B(\hil)$ be a contraction and let $h\in \hil$ such that $\|Th\|\geq \delta \|h\|$ for some $\delta >0$. Then,
$$
\|b_{\mu}(T)h\|\geq \frac{\delta-|\mu|}{1+|\mu|}\|h\|
$$
for every $\mu\in \D$.
\end{lemma}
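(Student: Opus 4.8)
The plan is to reduce everything to the explicit rational form of $b_{\mu}$. Since $\sigma(T)\subseteq\ol{\D}$ while the pole $1/\ol{\mu}$ of $b_{\mu}$ lies outside $\ol{\D}$ (equivalently $\|\ol{\mu}T\|\leq|\mu|<1$), the operator $I-\ol{\mu}T$ is invertible, and one can take $B:=b_{\mu}(T)=(T-\mu I)(I-\ol{\mu}T)^{-1}$. Since $I-\ol{\mu}T$, its inverse, and $T-\mu I$ are all functions of $T$ and hence commute, this gives the identity
$$
(I-\ol{\mu}T)\,Bh=(T-\mu I)h=Th-\mu h ,
$$
and everything else is just estimating the two sides of this equation.

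For the right-hand side, the hypothesis $\|Th\|\geq\delta\|h\|$ together with the triangle inequality yields
$$
\|Th-\mu h\|\geq\|Th\|-|\mu|\,\|h\|\geq(\delta-|\mu|)\|h\| .
$$
For the left-hand side, using only that $T$ is a contraction,
$$
\|(I-\ol{\mu}T)Bh\|\leq\|Bh\|+|\mu|\,\|TBh\|\leq(1+|\mu|)\|Bh\| .
$$
Combining these two bounds gives $(1+|\mu|)\|Bh\|\geq(\delta-|\mu|)\|h\|$, and dividing by $1+|\mu|>0$ produces exactly the claimed inequality (note that it is vacuously true when $|\mu|\geq\delta$, since then the right-hand side is nonpositive).

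There is essentially no obstacle here: the argument is a two-line manipulation. The only point deserving a moment's attention is to estimate the two sides of the displayed identity \emph{in the correct directions} — bounding $\|(I-\ol{\mu}T)Bh\|$ from above and $\|(T-\mu I)h\|$ from below — and to notice that we never use anything beyond $\|T\|\leq 1$; in particular, neither complete non-unitarity nor any of the functional-calculus machinery of the previous section is needed.
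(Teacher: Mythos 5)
Your argument is correct and is essentially identical to the paper's proof: both rest on the identity $b_{\mu}(T)(I-\ol{\mu}T)=T-\mu I$, the lower bound $\|(T-\mu I)h\|\geq(\delta-|\mu|)\|h\|$ from the triangle inequality, and the upper bound $\|I-\ol{\mu}T\|\leq 1+|\mu|$. The extra details you supply (invertibility of $I-\ol{\mu}T$, commutativity, the vacuous case $|\mu|\geq\delta$) are harmless elaborations of the same two-line computation.
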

\begin{proof}
We have that $ b_{\mu}(T)(1-\ol{\mu}T)=T-\mu $ so that
$$
\|b_{\mu}(T)(1-\ol{\mu}T)h\|\geq (\delta-|\mu|)\|h\|.
$$
Thus,
$$
\|b_{\mu}(T)h\|\geq
\frac{1}{\|1-\ol{\mu}T\|}\|b_{\mu}(T)(1-\ol{\mu}T)h\|\geq
\frac{\delta-|\mu|}{1+|\mu|}\|h\|.
$$
\end{proof}

We also require the following fact (see \cite{nagy1970comp}).

\begin{theorem}\label{t-invsub}
Let $T\in B(\hil)$ be a multiplicity-free operator of class $C_0$
with minimal function $\theta$. Then, every closed invariant
subspace $M\subset \hil$ of $T$ is of the form
$$
M=\ker \phi(T)=\ol{(\theta/\phi)(T)\hil}
$$ for
some inner divisor $\phi$ of $\theta$. Conversely, if $\phi$ is an inner divisor of $\theta$, then
$$
M=\ker \phi(T)=\ol{(\theta/\phi)(T)\hil}
$$
is an invariant subspace for $T$ and the minimal function of $T|M$ is equal to $\phi$.
\end{theorem}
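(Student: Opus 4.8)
The plan is to reduce everything to the Jordan block $S(\theta)$, where the statement follows from Beurling's theorem, and then to transfer the description along a quasisimilarity between $T$ and $S(\theta)$, using Sarason's description of the commutant of $S(\theta)$ to keep control of the intertwining operators.

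\textbf{Step 1: the case $T=S(\theta)$.} If $M\subseteq H(\theta)$ is invariant for $S(\theta)$, then $H(\theta)\ominus M$ is invariant for $S(\theta)^{*}=S^{*}|H(\theta)$, hence is a closed $S^{*}$-invariant subspace of $H^{2}$ contained in $H(\theta)$; by Beurling's theorem it equals $H(\psi)$ for an inner $\psi$, and the containment $H(\psi)\subseteq H(\theta)$ forces $\psi$ to divide $\theta$. Thus $M=H(\theta)\ominus H(\psi)=\psi H(\theta/\psi)$. Setting $\phi=\theta/\psi$, a short computation with the functional calculus of $S(\theta)$ (using $H^{2}=H(\phi)\oplus\phi H^{2}$) gives $\psi H(\phi)=\ker\phi(S(\theta))=\overline{\psi(S(\theta))H(\theta)}$, and multiplication by $\psi$ is a unitary intertwining $S(\phi)$ with $S(\theta)|\psi H(\phi)$, so the latter has minimal function $\phi$. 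This is the assertion for $S(\theta)$.

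\textbf{Step 2: intertwining data.} For general $T$, Theorem \ref{t-classification} shows that the Jordan model of $T$ is $S(\theta)$, so I fix quasiaffinities $X\colon\hil\to H(\theta)$ and $Y\colon H(\theta)\to\hil$ with $XT=S(\theta)X$ and $YS(\theta)=TY$. Then $XY$ commutes with $S(\theta)$, so by Sarason's commutant theorem $XY=w(S(\theta))$ for some $w\in H^{\infty}$; since $XY$ is a quasiaffinity, Lemma \ref{l-qa} shows that $w$ and $\theta$ have no non-constant common inner divisor. Composing, $X(YX)=(XY)X=w(S(\theta))X=Xw(T)$, and injectivity of $X$ yields $YX=w(T)$. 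Thus $YX$ is an honest element of $H^{\infty}(T)$; in particular it leaves every closed $T$-invariant subspace invariant, since such a subspace is weakly closed and invariant under the weak-$*$-dense subalgebra of polynomials in $T$.

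\textbf{Step 3: transfer and converse.} Let $N\subseteq\hil$ be closed and $T$-invariant. Then $\overline{XN}$ is $S(\theta)$-invariant, so by Step 1 it equals $\ker\phi(S(\theta))=\overline{(\theta/\phi)(S(\theta))H(\theta)}$ for a unique inner divisor $\phi$ of $\theta$. From $XN\subseteq\ker\phi(S(\theta))$ we get $X\phi(T)N=\phi(S(\theta))XN=0$, whence $\phi(T)N=0$, i.e.\ $N\subseteq\ker\phi(T)$. On the other hand $N$ is invariant under $YX=w(T)$, and $w(T)|N=w(T|N)$ is a quasiaffinity of $N$ by Lemma \ref{l-qa} applied to the $C_{0}$ operator $T|N$ (whose minimal function divides $\theta$, hence is coprime to $w$), so $\overline{YXN}=N$. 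Since also
$$
\overline{YXN}=\overline{Y\,\overline{XN}}=\overline{Y(\theta/\phi)(S(\theta))H(\theta)}=\overline{(\theta/\phi)(T)YH(\theta)}=\overline{(\theta/\phi)(T)\hil}
$$
(the last equality because $YH(\theta)$ is dense in $\hil$), we conclude $N=\overline{(\theta/\phi)(T)\hil}$. Applying this to $N=\ker\phi(T)$, and noting that $\overline{X\ker\phi(T)}\subseteq\ker\phi(S(\theta))$ while $\ker\phi(T)\supseteq(\theta/\phi)(T)\hil$ forces $\overline{X\ker\phi(T)}\supseteq\overline{(\theta/\phi)(S(\theta))H(\theta)}=\ker\phi(S(\theta))$, so that the divisor attached to $\ker\phi(T)$ is again $\phi$, gives $\ker\phi(T)=\overline{(\theta/\phi)(T)\hil}$. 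Hence $N=\ker\phi(T)=\overline{(\theta/\phi)(T)\hil}$, which is the first assertion, and the first half of the converse is now immediate. For the minimal function: if $e$ is a cyclic vector for $T$, then $(\theta/\phi)(T)e$ is cyclic for $T|\ker\phi(T)$, and $u(T)(\theta/\phi)(T)e=0$ iff $\theta$ divides $u(\theta/\phi)$ iff $\phi$ divides $u$, so the minimal function of $T|\ker\phi(T)$ equals $\phi$.

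\textbf{Expected obstacle.} The crux is Step 2: one must ensure that the composite intertwiner $YX$ is genuinely a function of $T$ (so that it preserves $\lat T$, not merely the lattice of hyperinvariant subspaces) and is a quasiaffinity. This is precisely where the absence of multiplicity is used --- through the fact that the Jordan model of $T$ is the single block $S(\theta)$ and through Sarason's description of $\{S(\theta)\}'$, combined with Lemma \ref{l-qa}; for operators of higher multiplicity the invariant subspace lattice of $T$ cannot in general be transported from that of its Jordan model, and the theorem fails.
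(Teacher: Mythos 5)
The paper does not prove this theorem at all: it is imported verbatim from Sz.-Nagy--Foias \cite{nagy1970comp}, so there is no internal proof to compare against. Your argument is correct and is essentially the classical derivation of that result. Step 1 (Beurling applied to $H(\theta)\ominus M$, the identification $\psi H(\phi)=\ker\phi(S(\theta))$, and the unitary $g\mapsto\psi g$ intertwining $S(\phi)$ with the restriction) is exactly the model-space computation; Step 2's observation that $YX=w(T)$ with $w$ coprime to $\theta$ --- obtained from Sarason's theorem (the paper's Theorem \ref{t-jbcomm}) applied to $XY$ together with Lemma \ref{l-qa} --- is the right device for transporting the lattice back from $S(\theta)$ to $T$, and Step 3 closes the loop cleanly, including the self-consistency check that the divisor attached to $\ker\phi(T)$ is $\phi$ itself. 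Two points are used silently and deserve a word each, though both are standard: (a) that the Jordan model of a \emph{multiplicity-free} $C_0$ operator with minimal function $\theta$ is the single block $S(\theta)$ requires, beyond Theorem \ref{t-classification}, the facts that quasiaffine transforms carry cyclic vectors to cyclic vectors and that quasisimilar $C_0$ operators share the same minimal function; (b) the intertwining relations $Xu(T)=u(S(\theta))X$ and $Yu(S(\theta))=u(T)Y$ for arbitrary $u\in H^\infty$ (not just polynomials), as well as the identity $u(T)|N=u(T|N)$, rest on the weak-$*$ continuity of the Sz.-Nagy--Foias calculus and on the fact that the restriction of a completely non-unitary contraction to an invariant subspace is again completely non-unitary. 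Neither is a gap, and your closing remark correctly identifies where multiplicity-freeness enters.
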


The next lemma is used to prove the main result of this section,
but it is also of independent interest. The very basic Lemma \ref{l-mobius} first comes into play here.

\begin{lemma}\label{l-cyclic}
Let $T\in B(\hil)$ be a multiplicity-free operator of class $C_0$ whose minimal function is a Blaschke product $\theta$. Let
$\xi\in \hil$ be a unit vector satisfying $\|\psi(T)\xi\|=1$ for
some big inner divisor $\psi$ of $\theta$. Then, $\xi$ is cyclic.
\end{lemma}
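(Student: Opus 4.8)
The plan is to show that the cyclic subspace $M:=\bigvee_{n\ge 0}T^n\xi$ generated by $\xi$ is all of $\hil$. By Theorem \ref{t-invsub} we may write $M=\ker\phi(T)$ for some inner divisor $\phi$ of $\theta$, and since $\phi(T)=0$ precisely when $\phi=\theta$, it suffices to prove $\phi=\theta$. Because $\psi$ is big we have (up to a unimodular constant) $\theta=\psi b_{\lambda}$ for some $\lambda\in\D$, so the whole argument reduces to a little combinatorics of inner divisors together with one norm estimate coming from Lemma \ref{l-mobius}.

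First I would analyze the greatest common inner divisor $d$ of $\phi$ and $\psi$. Since $\phi$ divides $\theta=\psi b_{\lambda}$ and $\phi/d$ is coprime to $\psi/d$, a short divisibility argument shows $\phi/d$ divides $b_{\lambda}$, hence $\phi/d=1$ or $\phi/d=b_{\lambda}$. In the first case $\phi$ divides $\psi$, say $\psi=\phi g$ with $g$ inner, and then $\psi(T)\xi=g(T)\phi(T)\xi=0$ because $\xi\in M=\ker\phi(T)$, contradicting $\|\psi(T)\xi\|=1$. So we must be in the second case: $\phi=db_{\lambda}$ with $d$ dividing $\psi$. If moreover $\phi\neq\theta$, then $d\neq\psi$, so $\psi=dk$ for some non-constant inner function $k$, which is necessarily a Blaschke product as a divisor of $\theta$.

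Now I would set $\eta:=d(T)\xi$. Then $\psi(T)\xi=k(T)\eta$, while $b_{\lambda}(T)\eta=\phi(T)\xi=0$ since $\phi=db_{\lambda}$ and the functional calculus is multiplicative. Because $I-\ol{\lambda}T$ is invertible (this is exactly the elementary manipulation behind Lemma \ref{l-mobius}, applied with $b_{\lambda}(T)(I-\ol{\lambda}T)=T-\lambda$), the identity $b_{\lambda}(T)\eta=0$ forces $T\eta=\lambda\eta$; in particular $\eta\neq 0$, for otherwise $\psi(T)\xi=k(T)\eta=0$. Restricting the $H^{\infty}$ functional calculus to the one-dimensional invariant subspace $\C\eta$ on which $T$ acts as multiplication by $\lambda$, we get $k(T)\eta=k(\lambda)\eta$, whence
$$
\|\psi(T)\xi\|=\|k(T)\eta\|=|k(\lambda)|\,\|\eta\|\le |k(\lambda)|\,\|d(T)\|\,\|\xi\|\le |k(\lambda)|<1,
$$
the strict inequality holding because $k$ is a non-constant inner function and $\lambda\in\D$. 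This contradicts $\|\psi(T)\xi\|=1$, so $\phi=\theta$, hence $M=\hil$ and $\xi$ is cyclic.

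The main obstacle, modest as it is, is the divisibility bookkeeping: one must check carefully that the factorization $\theta=\psi b_{\lambda}$ together with the coprimality $\gcd(\phi/d,\psi/d)=1$ really does pin $\phi/d$ down to $1$ or $b_{\lambda}$, and that the argument survives when $\theta$ — hence $\psi$ — is an infinite Blaschke product, which it does since $b_{\lambda}$ remains a single Blaschke factor and the non-constant divisor $k$ still has at least one zero in $\D$. Everything else is a direct application of Theorem \ref{t-invsub}, the homomorphism property of the functional calculus, and the estimate of Lemma \ref{l-mobius}.
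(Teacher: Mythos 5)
Your proof is correct, but it takes a genuinely different route from the paper's. Both arguments begin identically: the cyclic subspace $M$ equals $\ker\phi(T)$ for some inner divisor $\phi$ of $\theta$ by Theorem \ref{t-invsub}, and one must rule out $\phi\neq\theta$. The paper then observes that a proper $\phi$ divides some big divisor $\omega=\theta/b_{\mu}$, so $\omega(T)\xi=0$ while $\psi=\theta/b_{\lambda}$ with $\lambda\neq\mu$ satisfies $\|\psi(T)\xi\|=1$; writing $b_{\lambda}=b_{z}\circ b_{\mu}$ and applying Lemma \ref{l-mobius} to the contraction $b_{\mu}(T)$ and the vector $(\theta/(b_{\lambda}b_{\mu}))(T)\xi$ (with $\delta=1$) gives $0=\|\omega(T)\xi\|\geq \frac{1-|z|}{1+|z|}\|\psi(T)\xi\|>0$, a contradiction. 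You instead pin $\phi$ down through the greatest common inner divisor $d$ of $\phi$ and $\psi$, produce an eigenvector $\eta=d(T)\xi$ with $T\eta=\lambda\eta$, and contradict $\|\psi(T)\xi\|=1$ via the maximum modulus principle, since $|k(\lambda)|<1$ for the non-constant inner function $k=\psi/d$. Your bookkeeping is sound: in the arithmetic of inner functions, $(\phi/d)\mid(\psi/d)\,b_{\lambda}$ together with the coprimality of $\phi/d$ and $\psi/d$ does force $(\phi/d)\mid b_{\lambda}$; the identity $b_{\lambda}(T)\eta=0$ does yield $(T-\lambda)\eta=0$ because $I-\ol{\lambda}T$ is invertible and commutes with $b_{\lambda}(T)$; and $u(T)\eta=u(\lambda)\eta$ for $u\in H^{\infty}$ is legitimate since $\C\eta$ is invariant and the $H^{\infty}$ functional calculus restricts to invariant subspaces. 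The trade-off: the paper's proof is shorter and showcases Lemma \ref{l-mobius}, which it reuses later, while yours dispenses with that lemma entirely and gives a more explicit description of what a proper cyclic subspace would have to look like.
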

\begin{proof}
Let $M\subset \hil$ be the smallest closed
invariant subspace for $T$ which contains $\xi$. By Theorem
\ref{t-invsub}, there must exist an inner divisor $\phi$ of $\theta$ with
the property that $M=\ker \phi(T)$. The desired conclusion will follow if we show that $\phi=\theta$, for then $M=\hil$. Assume
on the contrary that $\phi$ is a proper divisor of $\theta$. Then,
there exists a big divisor $\omega$ of $\theta$ with the property
that $\omega(T)\xi=0$. Note that $\psi(T)\xi\neq 0$ by assumption so that
$\psi\neq \omega$. Now, there exists $\lambda,\mu\in \D$ distinct zeros of $\theta$ such that $\psi=\theta/b_{\lambda}$ and $\omega=\theta/b_{\mu}$.
Choose $z\in \D$ with the property that $b_{z}
\circ b_{\mu}=b_{\lambda}$. Using Lemma \ref{l-mobius} and the fact
that
$$
1=\|\psi(T)\xi\|=\left\|b_{\mu}(T)\left(\frac{\theta}{b_{\lambda}b_{\mu}}\right)(T)\xi\right\|,
$$
we find
\begin{align*}
0&=\|\omega(T)\xi\|\\
&=\left\|b_{\lambda}(T) \left(\frac{\theta}{b_{\lambda}b_{\mu}}\right)(T)\xi\right\|\\
&=\left\|(b_{z} \circ b_{\mu})(T)\left(\frac{\theta}{b_{\lambda}b_{\mu}}\right)(T)\xi\right\|\\
&\geq \frac{1-|z|}{1+|z|}\left\|\left(\frac{\theta}{b_{\lambda}b_{\mu}}\right)(T)\xi\right\|\\
&\geq \frac{1-|z|}{1+|z|}\|\psi(T)\xi\|
\end{align*}
which is a contradiction since $\psi(T)\xi\neq 0$.
\end{proof}

Before moving on to the next step towards the main result of this section, we recall an elementary fact (see \cite{clouatre2013} for instance).
\begin{lemma}\label{l-basisfinite}
Let $T\in B(\hil)$ be a multiplicity-free operator of class $C_0$
with minimal function $\theta=b_{\lambda_1}\ldots b_{\lambda_N}$.
Let $\xi\in \hil$ be a cyclic vector for $T$.
Then, the vectors
$$
\xi,b_{\lambda_1}(T)\xi,(b_{\lambda_1}b_{\lambda_2})(T)\xi,\ldots,(b_{\lambda_1}\ldots b_{\lambda_{N-1}})(T)\xi
$$
form a basis for
$\hil$.
\end{lemma}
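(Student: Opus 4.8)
The plan is to reduce the statement to an elementary divisibility fact about finite Blaschke products. First I would record that $\hil$ is $N$-dimensional: by Theorem~\ref{t-classification}, $T$ is quasisimilar to its Jordan model, which --- since $T$ is multiplicity-free and $\theta$ is a Blaschke product with $N$ factors --- is $S(\theta)$ acting on the $N$-dimensional space $H(\theta)$; the existence of injective intertwiners with dense range in both directions then forces $\dim\hil=N$. Since the proposed list consists of exactly $N$ vectors, it therefore suffices to prove that they are linearly independent.

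Suppose then that $c_0\xi + \sum_{k=1}^{N-1} c_k (b_{\lambda_1}\cdots b_{\lambda_k})(T)\xi = 0$ for scalars $c_0,\dots,c_{N-1}$. Setting $u = c_0 + \sum_{k=1}^{N-1} c_k\, b_{\lambda_1}\cdots b_{\lambda_k}\in H^\infty$ and using that the Sz.-Nagy--Foias functional calculus is an algebra homomorphism, this reads $u(T)\xi = 0$. For every polynomial $q$ we then get $u(T)q(T)\xi = q(T)u(T)\xi = 0$, and since $\xi$ is cyclic and $u(T)$ is bounded this yields $u(T)=0$, i.e. $u\in\ker\Phi=\theta H^\infty$. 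Hence $\theta = b_{\lambda_1}\cdots b_{\lambda_N}$ divides $u$ in $H^\infty$.

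It remains to prove the claim: if $b_{\lambda_1}\cdots b_{\lambda_N}$ divides $c_0 + \sum_{k=1}^{N-1} c_k\, b_{\lambda_1}\cdots b_{\lambda_k}$ in $H^\infty$, then all $c_k$ vanish. I would argue by induction on $N$. Writing $u = b_{\lambda_1}\cdots b_{\lambda_N}\, g$ with $g\in H^\infty$ and evaluating both sides at $\lambda_1\in\D$, every term of $u$ other than $c_0$ contains the factor $b_{\lambda_1}$ and thus vanishes at $\lambda_1$, as does the right-hand side; hence $c_0=0$. Then $u = b_{\lambda_1}\bigl(c_1 + \sum_{k=2}^{N-1} c_k\, b_{\lambda_2}\cdots b_{\lambda_k}\bigr)$, and cancelling the nonzero factor $b_{\lambda_1}$ (legitimate since $H^\infty$ is an integral domain) shows that $b_{\lambda_2}\cdots b_{\lambda_N}$ divides $c_1 + \sum_{k=2}^{N-1} c_k\, b_{\lambda_2}\cdots b_{\lambda_k}$; the inductive hypothesis gives $c_1=\dots=c_{N-1}=0$.

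As the paper indicates, this is genuinely elementary and I do not expect a real obstacle. The only points deserving a moment's care are that the zeros $\lambda_1,\dots,\lambda_N$ of $\theta$ need not be distinct --- the induction above is insensitive to this --- and that one should verify $\dim\hil$ equals $N$ exactly, not merely at most $N$, so that linear independence of the $N$ vectors actually produces a basis.
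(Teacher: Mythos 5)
Your argument is correct: the dimension count via quasisimilarity to the Jordan model $S(\theta)$, the reduction of linear dependence to membership of $u$ in $\ker\Phi=\theta H^\infty$ via cyclicity of $\xi$, and the inductive divisibility claim (which indeed is insensitive to repeated roots) are all sound. Note that the paper does not actually prove this lemma --- it only states it as ``elementary'' and points to \cite{clouatre2013} --- so there is no in-text proof to compare against; your self-contained argument is the natural one and fills that gap correctly.
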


We require an infinite-dimensional version of Lemma \ref{l-basisfinite}.
First, we need another basic fact (see Theorem 2.4.6 of \cite{bercovici1988})
%
%
%Moreover, given $E\subset \hil$, we
%denote by $\bigvee E$ the smallest closed subspace containing $E$.

\begin{lemma}\label{l-lcmspan}
Let $T\in B(\hil)$ be an operator of class $C_0$ with minimal
function $\theta$. Given a family $\{\theta_n\}_n$ of inner divisors
of $\theta$ with least common inner multiple $\phi$, we have that $\ker\phi(T)$ is the smallest closed subspace containing  $\ker \theta_n(T)$ for every $n$.
%and
%$$
%\ker \psi(T)=\bigcap_{n}\ker \theta_n(T)
%$$
%where $\phi$ (respectively $\psi$) is the least common inner
%multiple (respectively the greatest common inner divisor) of the
%family $\{\theta_n\}_n$.
\end{lemma}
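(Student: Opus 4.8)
The plan is to deduce the lemma from the order-theoretic correspondence between the inner divisors of $\theta$ and the invariant subspaces of $T$ furnished by Theorem \ref{t-invsub}, treating first the multiplicity-free case and then reducing the general case to it via the Jordan model. To begin, since each $\theta_n(T)$ commutes with $T$, the subspace $\ker\theta_n(T)$ is invariant for $T$, and hence so is the closed linear span $M=\bigvee_n\ker\theta_n(T)$, which is precisely the smallest closed subspace containing all the $\ker\theta_n(T)$. Moreover, writing $\phi=\theta_n g_n$ with $g_n\in H^\infty$ (possible since $\theta_n$ divides $\phi$) gives $\phi(T)=g_n(T)\theta_n(T)$, so that $\ker\theta_n(T)\subseteq\ker\phi(T)$; as the latter subspace is closed, $M\subseteq\ker\phi(T)$. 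The substance of the lemma is therefore the reverse inclusion $\ker\phi(T)\subseteq M$.

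Suppose $T$ is multiplicity-free. By Theorem \ref{t-invsub}, the assignment $\psi\mapsto\ker\psi(T)$ is a bijection from the set of inner divisors of $\theta$ onto the set of closed invariant subspaces of $T$. It is order-preserving by the computation above, and so is its inverse: if $\ker\psi_1(T)\subseteq\ker\psi_2(T)$, then $\psi_2(T)$ vanishes on $\ker\psi_1(T)$, an invariant subspace on which, again by Theorem \ref{t-invsub}, $T$ has minimal function $\psi_1$, and hence $\psi_1$ divides $\psi_2$. Consequently $\psi\mapsto\ker\psi(T)$ is an isomorphism between the complete lattice of inner divisors of $\theta$ under divisibility and the complete lattice of closed invariant subspaces of $T$ under inclusion, so it carries suprema to suprema. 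Since $\phi$ is the least common inner multiple of $\{\theta_n\}_n$, it is the supremum of that family among the inner divisors of $\theta$; therefore $\ker\phi(T)$ is the supremum of $\{\ker\theta_n(T)\}_n$ in the lattice of closed invariant subspaces, i.e. $\ker\phi(T)=M$. This already covers the uses of the lemma in the present paper, where $T$ is multiplicity-free.

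For an arbitrary operator $T$ of class $C_0$, let $J=\bigoplus_\alpha S(m_\alpha)$ be its Jordan model (Theorem \ref{t-classification}) and fix intertwining quasiaffinities $X\colon\hil\to\hil_J$ and $Y\colon\hil_J\to\hil$. The maps $N\mapsto\overline{XN}$ and $N'\mapsto\overline{YN'}$ are mutually inverse isomorphisms between the lattices of hyperinvariant subspaces of $T$ and of $J$, and they transport $\ker u(T)$ to $\ker u(J)$ for every inner function $u$ (see \cite{bercovici1988}); hence it is enough to prove the lemma for $J$. For $J$ one has $\ker u(J)=\bigoplus_\alpha\ker u(S(m_\alpha))$ and $\ker u(S(m_\alpha))=\ker(u\wedge m_\alpha)(S(m_\alpha))$ (using Lemma \ref{l-qa}); applying the multiplicity-free case to each block $S(m_\alpha)$, together with the distributive law $\bigvee_n(\theta_n\wedge m_\alpha)=\phi\wedge m_\alpha$ in the lattice of inner functions, yields $\bigvee_n\ker\theta_n(S(m_\alpha))=\ker\phi(S(m_\alpha))$ for each $\alpha$. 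Taking the Hilbert space direct sum over $\alpha$ gives $\bigvee_n\ker\theta_n(J)=\ker\phi(J)$, which completes the proof.

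The one delicate point is the inclusion $\ker\phi(T)\subseteq M$: the easy inclusion is a formal consequence of divisibility, whereas the reverse direction requires the correspondence between divisors and invariant subspaces to be faithful enough to send least common multiples to closed linear spans. For multiplicity-free $T$ this is exactly the order-isomorphism of Theorem \ref{t-invsub}; the only additional ingredients in the general case are the standard structural facts that quasisimilar operators of class $C_0$ have isomorphic lattices of hyperinvariant subspaces and that the lattice of inner functions is distributive.
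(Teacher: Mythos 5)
The paper does not actually prove this lemma --- it is quoted verbatim from Theorem 2.4.6 of \cite{bercovici1988} --- so there is no in-paper argument to compare against; what matters is whether your proof stands on its own. Your multiplicity-free case does: the observation that Theorem \ref{t-invsub} makes $\psi\mapsto\ker\psi(T)$ an order isomorphism (order-preserving by divisibility, order-reflecting because the minimal function of $T|\ker\psi_1(T)$ is $\psi_1$), together with the fact that order isomorphisms of complete lattices carry suprema to suprema and that the least common inner multiple of divisors of $\theta$ is their supremum in the divisor lattice, is a clean and complete argument. Since the only application in the paper (Lemma \ref{l-basis}) concerns a multiplicity-free $T$, this portion already does all the work the paper needs, and it is arguably more transparent than chasing the reference.

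The general case is where I would push back. You reduce to the Jordan model by invoking the theorem that a quasiaffinity $X$ intertwining quasisimilar $C_0$ operators satisfies $\overline{X\ker u(T)}=\ker u(J)$ and implements an isomorphism of hyperinvariant subspace lattices. That statement is true, but it is a substantially deeper result that appears much later in \cite{bercovici1988} than Theorem 2.4.6, and its proof uses 2.4.6 (and its companion statement for greatest common divisors and intersections of kernels) as a basic tool; as a from-scratch proof of the lemma your reduction is therefore circular, and the easy half of the transport ($\overline{X\ker u(T)}\subseteq\ker u(J)$) is the only part you could get for free. Two smaller points: the infinite distributive law $\bigvee_n(\theta_n\wedge m_\alpha)=\phi\wedge m_\alpha$ is correct but not formal lattice theory --- it requires the Blaschke/singular-measure description of inner functions --- and deserves at least a sentence; and the identity $\ker u(S(m_\alpha))=\ker(u\wedge m_\alpha)(S(m_\alpha))$ quietly uses the gcd/intersection analogue of the very lemma under discussion. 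If you want a self-contained general case, it is better to argue directly (as Bercovici does) with two divisors at a time rather than to route through quasisimilarity; otherwise, state explicitly that you are proving only the multiplicity-free case actually used in the paper.
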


We now proceed to establish a more general version of Lemma \ref{l-basisfinite}.

\begin{lemma}\label{l-basis}
Let $T\in B(\hil)$ be a multiplicity-free operator of class $C_0$
whose minimal function is a Blaschke product
$\theta$. Let $\xi\in \hil$ be a
unit vector which is also cyclic for $T$. Then, for every big divisor $\psi$ of $\theta$ we have that $\hil$ is the
smallest closed subspace containing $\phi(T)\xi$ for every inner
divisor $\phi$ of $\psi$.
\end{lemma}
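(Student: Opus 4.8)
The plan is to reduce the infinite Blaschke product case to the finite one treated in Lemma \ref{l-basisfinite}, using the structure of invariant subspaces from Theorem \ref{t-invsub} together with the least common multiple description of spans in Lemma \ref{l-lcmspan}. Write $\psi = \theta/b_\lambda$ for the root $\lambda$ corresponding to the big divisor $\psi$. Let $\M$ denote the smallest closed subspace of $\hil$ containing $\phi(T)\xi$ for every inner divisor $\phi$ of $\psi$; the goal is to show $\M = \hil$.

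First I would exhaust $\theta$ by finite Blaschke products: since $\theta$ is a Blaschke product, we may write it as the least common inner multiple of an increasing sequence of finite Blaschke divisors $\theta_n$, where $\theta_n$ is the product of the first $n$ Blaschke factors (taken so that $b_\lambda$ divides every $\theta_n$ with $n$ large, which is harmless by reindexing). Set $\hil_n = \ker\theta_n(T) = \ol{(\theta/\theta_n)(T)\hil}$, an invariant subspace for $T$ on which the restriction $T|\hil_n$ is multiplicity-free of class $C_0$ with minimal function $\theta_n$ by Theorem \ref{t-invsub}. By Lemma \ref{l-lcmspan}, $\hil$ is the smallest closed subspace containing all the $\hil_n$, so it suffices to show $\hil_n \subset \M$ for each $n$.

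The key point is to produce, on each $\hil_n$, a cyclic vector to which Lemma \ref{l-basisfinite} applies. The natural candidate is $\xi_n = (\theta/\theta_n)(T)\xi$, which lies in $\hil_n$. I would argue that $\xi_n$ is cyclic for $T|\hil_n$: since $\xi$ is cyclic for $T$ and $(\theta/\theta_n)(T)$ commutes with $T$ with range dense in $\hil_n$, the closed $T$-invariant subspace generated by $\xi_n$ contains $(\theta/\theta_n)(T)\cdot(\text{dense subspace of }\hil)$, hence is all of $\hil_n$. Now $\theta_n = b_{\mu_1}\cdots b_{\mu_n}$ is a finite Blaschke product and $\xi_n$ is cyclic for $T|\hil_n$, so Lemma \ref{l-basisfinite} shows that $\hil_n$ is spanned by the vectors $\xi_n, b_{\mu_1}(T)\xi_n, (b_{\mu_1}b_{\mu_2})(T)\xi_n, \ldots, (b_{\mu_1}\cdots b_{\mu_{n-1}})(T)\xi_n$. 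Each of these has the form $\eta(T)\xi$ where $\eta = (\theta/\theta_n)\cdot(b_{\mu_1}\cdots b_{\mu_k})$ for some $0 \le k \le n-1$; since $\theta/\theta_n$ omits the factor $b_\lambda$ (because $b_\lambda \mid \theta_n$), the function $\eta$ is an inner divisor of $\theta/b_\lambda = \psi$. Therefore each spanning vector of $\hil_n$ lies in $\M$, so $\hil_n \subset \M$, and letting $n\to\infty$ via Lemma \ref{l-lcmspan} gives $\M = \hil$.

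The main obstacle I anticipate is the bookkeeping needed to guarantee that $b_\lambda$ divides each $\theta_n$ while still exhausting $\theta$ — one must choose the enumeration of the Blaschke factors of $\theta$ with $b_\lambda$ first (accounting for multiplicity of $\lambda$ as a zero of $\theta$) — and the verification that the vectors produced by Lemma \ref{l-basisfinite} really are of the form $\phi(T)\xi$ with $\phi \mid \psi$ rather than $\phi \mid \theta$. The cyclicity of $\xi_n$ for $T|\hil_n$ is the other point requiring a small but genuine argument; everything else is routine once the exhaustion is set up correctly.
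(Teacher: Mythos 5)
Your overall strategy --- exhausting $\theta$ by an increasing sequence of finite Blaschke divisors $\theta_n$, applying Lemma \ref{l-basisfinite} to the restrictions $T|\ker\theta_n(T)$ with the cyclic vectors $(\theta/\theta_n)(T)\xi$, and assembling the pieces via Lemma \ref{l-lcmspan} --- is sound, and is in fact a natural variant of the paper's own argument (the paper instead groups the zeros by distinct points, writing $\theta=\prod_n\theta_n$ with $\theta_n=\tilde{b}_{\lambda_n}^{d_n}$, and works with the family $\{\theta_0\}\cup\{b_{\lambda_0}\theta_n\}_{n\geq 1}$, whose least common inner multiple is likewise $\theta$). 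Your cyclicity argument for $\xi_n$ and the reduction of $\hil$ to the closed span of the $\ker\theta_n(T)$ are both correct.

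There is, however, a concrete error in the step where you check that the resulting vectors have the form $\phi(T)\xi$ with $\phi\mid\psi$. Writing $\theta_n=b_{\mu_1}\cdots b_{\mu_n}$, the functions produced are $\eta=(\theta/\theta_n)(b_{\mu_1}\cdots b_{\mu_k})$ for $0\leq k\leq n-1$, so $\theta/\eta=b_{\mu_{k+1}}\cdots b_{\mu_n}$, and $\eta$ divides $\psi=\theta/b_{\lambda}$ exactly when $b_{\lambda}$ divides this \emph{tail}. Demanding this for every $k\leq n-1$, in particular for $k=n-1$, forces $\mu_n=\lambda$; the observation that $\theta/\theta_n$ omits $b_{\lambda}$ is not sufficient. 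With $b_{\lambda}$ placed \emph{first}, as you propose, the top partial product already gives $\eta=\theta/b_{\mu_n}$, which divides $\theta/b_{\lambda}$ only if $\mu_n=\lambda$, so the claimed divisibility fails (and similarly for every $k$ large enough that the head $b_{\mu_1}\cdots b_{\mu_k}$ absorbs all copies of $b_{\lambda}$). The repair is to use the opposite convention at the point where Lemma \ref{l-basisfinite} is invoked: keep $b_{\lambda}$ early in the global enumeration so that $b_{\lambda}\mid\theta_n$ for all $n$, but reorder the factors of each $\theta_n$ so that one copy of $b_{\lambda}$ comes \emph{last}; then every partial product $b_{\mu_1}\cdots b_{\mu_k}$ with $k\leq n-1$ divides $\theta_n/b_{\lambda}$, hence $\eta\mid\psi$. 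This is precisely the arrangement in the paper's proof, where the distinguished factor $b_{\lambda_0}$ is always the omitted final factor of each finite piece. With that correction your proof is complete.
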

\begin{proof}
We can write $\theta=\prod_{n=0}^\infty \theta_n$ where each
$\theta_n$ is a power of a Blaschke factor with the property that
$\theta_n$ and $\theta_m$ have no non-constant common inner divisor if $n\neq m$.
For each $n\geq 0$, put
$
\theta_n=\tilde{b}_{\lambda_n}^{d_n}
$
for some $\lambda_n\in \D$ and some positive integer $d_n$.
Without loss of generality, we may assume that $\psi=\theta/b_{\lambda_0}$.

By Lemma \ref{l-basisfinite}, we see that for each $n\geq 1$ the set
$$\{(b_{\lambda_n}^{k})(T)h: 0\leq k \leq d_n\}$$ is a basis for $\ker
(b_{\lambda_0}\theta_n)(T)$ whenever $h$ is a cyclic vector for
$T|\ker(b_{\lambda_0}\theta_n)(T)$. Since $\xi$ is cyclic for $T$,
we get from Theorem \ref{t-invsub} that
$
\left(\theta/(b_{\lambda_0}\theta_n) \right)(T)\xi
$
is cyclic for $T|\ker(b_{\lambda_0}\theta_n)(T)$ and thus
$$
\left\{\left( \frac{\theta}{b_{\lambda_0}\theta_n}
b_{\lambda_n}^{k}\right)(T)\xi: 0\leq k \leq d_n\right\}
$$
is a basis for $\ker (b_{\lambda_0}\theta_n)(T)$ for every $n\geq 1$. Note in addition that
$$
\frac{\theta}{b_{\lambda_0}\theta_n} b_{\lambda_n}^{k}
$$
divides
$\psi=\theta/b_{\lambda_0}$ for every $k$.
A similar argument shows that
$$
\left\{\left( \frac{\theta}{\theta_0} b_{\lambda_0}^{k}\right)(T)\xi:
0\leq k \leq d_0-1\right\}
$$
is a basis for $\ker \theta_0(T)$. Note once again that
$$
\frac{\theta}{\theta_0} b_{\lambda_0}^{k}
$$ divides
$\psi=\theta/b_{\lambda_0}$ for every $0\leq k \leq d_0-1$.

Therefore the smallest closed subspace containing all the vectors
of the form $\phi(T)\xi$ where $\phi$ is an inner divisor of
$\psi$ contains
$$
\ker \theta_0(T)\cup \bigcup_{n=1}^\infty \ker
(b_{\lambda_0}\theta_n)(T).
$$
Since the least common inner multiple of $\theta_0$ and the family
$\{b_{\lambda_0}\theta_n \}_n$ is $\theta$, the conclusion follows from Lemma \ref{l-lcmspan}.
\end{proof}

We now come to the main result of this section. In light of Lemma \ref{l-basis} and Lemma \ref{l-cyclic}, it is a consequence of Theorem 3.2.9 in \cite{arveson1969} and the surrounding circle of ideas. However, we feel that the following proof (which is an adaptation of that of Lemma 3.2.6 in \cite{arveson1969}) is instructive and more direct, so we provide it nonetheless.

\begin{theorem}\label{t-maxue}
Let $T\in B(\hil)$ be a multiplicity-free operator of class $C_0$
whose minimal function is a Blaschke product
$\theta$. Assume that
$\|\psi(T)\|=1$ for some big inner divisor $\psi$ of $\theta$. Then, $T$ is unitarily equivalent
to $S(\theta)$.
\end{theorem}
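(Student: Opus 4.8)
The plan is to produce an explicit unitary $W\colon\hil\to H(\theta)$ with $WT=S(\theta)W$, using a cleverly chosen cyclic vector. Write $\theta=\psi b_\lambda$ with $\lambda\in\D$; the case $\psi=1$ is trivial (then $\hil$ is one-dimensional and $T=S(\theta)$). First I would upgrade $\|\psi(T)\|=1$ to genuine maximality. Since $T$ is multiplicity-free it has a cyclic vector $\xi_0$, and by Theorem \ref{t-invsub} the vector $\psi(T)\xi_0=(\theta/b_\lambda)(T)\xi_0$ is cyclic for the restriction of $T$ to $\ker b_\lambda(T)=\overline{\psi(T)\hil}$, which is a multiplicity-free $C_0$ operator with minimal function $b_\lambda$; being quasisimilar to $S(b_\lambda)$ by Theorem \ref{t-classification}, it acts on a one-dimensional space. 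Hence $\psi(T)$ has one-dimensional range, so $\psi(T)=\langle\,\cdot\,,\xi\rangle e$ with $\|e\|=\|\xi\|=\|\psi(T)\|=1$, and then $\|\psi(T)\xi\|=1$; by Lemma \ref{l-cyclic}, $\xi$ is cyclic. Running the same argument on $S(\theta)$ — together with the elementary observation that $\psi k_\lambda$, with $k_\lambda$ the normalized reproducing kernel at $\lambda$, is a unit vector of $H(\theta)$ on which $\psi(S(\theta))$ has norm $1$ — produces a maximal unit cyclic vector $g\in H(\theta)$. I would also record the one consequence of maximality used repeatedly: if $\phi$ is any inner divisor of $\psi$, say $\psi=\phi\chi$, then $1=\|\psi(T)\xi\|=\|\chi(T)\phi(T)\xi\|\le\|\phi(T)\xi\|\le\|\xi\|=1$, so $\|\phi(T)\xi\|=1$, and likewise $\|\phi(S(\theta))g\|=1$.

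Next I would reduce everything to finite Blaschke products via Theorem \ref{t-maxarveson}. For each finite Blaschke divisor $\rho$ of $\theta$ with $b_\lambda\mid\rho$ — equivalently $\theta/\rho\mid\psi$ — set $M_\rho=\ker\rho(T)$ and $N_\rho=\ker\rho(S(\theta))$; by Theorem \ref{t-invsub} these are finite-dimensional invariant subspaces carrying multiplicity-free $C_0$ operators with minimal function $\rho$. The vector $\xi_\rho:=(\theta/\rho)(T)\xi$ is cyclic for $T|M_\rho$, has norm $1$ (as $\theta/\rho\mid\psi$), and satisfies $(\rho/b_\lambda)(T)\xi_\rho=(\theta/b_\lambda)(T)\xi=\psi(T)\xi$, of norm $1$; so $T|M_\rho$ is maximal and Theorem \ref{t-maxarveson} gives $T|M_\rho\cong S(\rho)$. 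The same holds for $S(\theta)|N_\rho$, with maximal unit cyclic vector $h_\rho:=(\theta/\rho)(S(\theta))g$. Composing, there is a unitary $U_\rho\colon M_\rho\to N_\rho$ intertwining the two restrictions; since $(\rho/b_\lambda)(S(\theta)|N_\rho)$ is a rank-one operator, its unit norm-attaining vector is unique up to a unimodular scalar, so after rescaling $U_\rho$ I may assume $U_\rho\xi_\rho=h_\rho$, whence $U_\rho\bigl(w(T)\xi_\rho\bigr)=w(S(\theta))h_\rho$ for every $w\in H^\infty$.

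Finally I would glue these together. Define $W_0$ on the dense subspace $\{u(T)\xi:u\in H^\infty\}$ of $\hil$ by $W_0(u(T)\xi)=u(S(\theta))g$. This is well defined and injective because $u(T)\xi=0\iff\theta\mid u\iff u(S(\theta))g=0$ ($\xi$ and $g$ being cyclic), it has dense range, and it intertwines $T$ and $S(\theta)$. For $\rho$ as above the subspace $\{w(T)\xi_\rho:w\in H^\infty\}=\{(\theta/\rho)(T)w(T)\xi:w\in H^\infty\}$ is dense in $M_\rho$, and on it $W_0$ agrees with the unitary $U_\rho$ — both send $w(T)\xi_\rho$ to $w(S(\theta))h_\rho$ — hence $W_0$ is isometric there. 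As $\rho$ ranges over the finite Blaschke divisors of $\theta$ containing $b_\lambda$, the corresponding $M_\rho$ have least common inner multiple $\theta$, so $\bigcup_\rho M_\rho$ is dense in $\hil$ by Lemma \ref{l-lcmspan}; thus $W_0$ is isometric on a dense subspace and extends to a unitary $W\colon\hil\to H(\theta)$ intertwining $T$ and $S(\theta)$, which proves the theorem.

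The hard part is the gluing step: the equivalences $T|M_\rho\cong S(\rho)$ coming out of Theorem \ref{t-maxarveson} are highly non-canonical, and what makes them assemble into a single $W$ is precisely the rank-one structure of $\psi(T)$ and of the $(\rho/b_\lambda)$-functional calculi of the restrictions, which determines the maximal unit vector up to a scalar and lets every $U_\rho$ agree with the one map $W_0$. An alternative, closer to Arveson's original computation, would be to use Lemma \ref{l-basis} to span $\hil$ and $H(\theta)$ by the families $\{\phi(T)\xi\}$ and $\{\phi(S(\theta))g\}$ over inner divisors $\phi$ of $\psi$ and to check directly that the two Gram matrices coincide; there the obstruction is evaluating $\langle\phi_1(T)\xi,\phi_2(T)\xi\rangle$ for divisors $\phi_1,\phi_2$ not ordered by divisibility, which is exactly what the reduction to finite pieces circumvents.
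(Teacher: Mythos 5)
Your argument is correct in substance but follows a genuinely different route from the paper's. The paper, after establishing (exactly as you do) that $\psi(T)$ has rank one and attains its norm at a cyclic unit vector $\xi$ with $\|\phi(T)\xi\|=1$ for every inner divisor $\phi$ of $\psi$, passes to the minimal unitary dilation $U$ of $T$, deduces $\phi(T)\xi=\phi(U)\xi$ from the equality of norms, shows that $\{b_{\lambda}^{n}(U)\xi\}_{n\in\Z}$ is orthonormal, and constructs an explicit unitary $\Lambda$ conjugating $U$ to $M_z$ on $L^2$ which, by Lemma \ref{l-basis}, carries $\hil$ onto $H(\theta)$. You instead bootstrap from the finite case, Theorem \ref{t-maxarveson}, and glue the resulting unitaries; this is legitimate since that theorem is quoted as known, and your observation that the rank-one structure of $(\rho/b_{\lambda})(S(\theta)|N_\rho)$ pins down each $U_\rho$ up to a unimodular scalar --- so that after rescaling they all coincide with the single densely defined map $W_0$ --- is the right way to make the gluing canonical. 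What the dilation argument buys is self-containedness (the paper is deliberately re-deriving and extending Arveson's Lemma 3.2.6 rather than invoking it, and it needs the more refined Lemma \ref{l-basis}); what yours buys is the avoidance of the dilation machinery, at the cost of using the finite-dimensional theorem as a black box.

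Two points need attention. First, in the gluing step, ``$W_0$ is isometric on each $M_\rho$ and $\bigcup_\rho M_\rho$ is dense'' does not by itself yield ``$W_0$ is isometric on a dense subspace'': Lemma \ref{l-lcmspan} gives density of the closed \emph{span} of $\bigcup_\rho M_\rho$, and isometry on each member of a family of subspaces controls no inner product between vectors taken from different members. The step is saved by the fact that your family is directed: two finite Blaschke divisors of $\theta$ containing $b_\lambda$ have a least common inner multiple of the same kind, and $M_{\rho_1},M_{\rho_2}\subseteq M_{\rho_3}$ whenever $\rho_1,\rho_2$ divide $\rho_3$; hence $\bigcup_\rho M_\rho$ is itself a linear subspace on which $W_0$ preserves inner products (by polarization inside a common $M_{\rho_3}$). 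This should be said explicitly. Second, a small slip: the unit vector of $H(\theta)$ at which $\psi(S(\theta))$ attains its norm is the normalized kernel $k_\lambda$ itself, not $\psi k_\lambda$; indeed $\psi(S(\theta))k_\lambda=\psi k_\lambda\in H(\theta)$ has norm one, whereas $\|\psi(S(\theta))(\psi k_\lambda)\|=|\psi(\lambda)|<1$ for non-constant $\psi$. This costs nothing, since $k_\lambda$ is cyclic for $S(\theta)$ and can serve as your $g$ (alternatively, get $\|\psi(S(\theta))\|=1$ from Theorem \ref{t-jbcomm} and repeat the rank-one argument verbatim).
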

\begin{proof}
Set $\psi=\theta/b_{\lambda}$. By Theorem \ref{t-invsub}, we know that $N=\ol{\psi(T)\hil}$ is an invariant subspace for $T$ with the property that $T|N$ has minimal function equal to $b_{\lambda}$. Thus, $T|N$ must be quasisimilar to $S(b_{\lambda})$ by Theorem \ref{t-classification} and we conclude that $N$ is one-dimensional. In other words, $\psi(T)$ has rank $1$ and there exists a unit vector $\xi\in
\hil$ with the property that $\|\psi(T)\xi\|=1$. It is easily verified that this implies that $\|\phi(T)\xi\|=1$ for every inner divisor $\phi$ of $\psi$. Note also that the vector $\xi$ is cyclic for $T$ by Lemma \ref{l-cyclic}.

Let us
now denote by $U:\kil\to\kil$ the minimal unitary dilation of $T$. The operator $\phi(U)$ is unitary for every inner divisor $\phi$ of $\psi$, whence
$$
\|\phi(U)\xi\|=1=\|\phi(T)\xi\|.
$$
These equalities coupled with the relation
$$
\phi(T)=P_{\hil}\phi(U)|\hil
$$
force $\phi(T)\xi=\phi(U)\xi$ for every inner divisor $\phi$ of $\psi$. Consequently, for integers $n,m$ such
that $n> m$, we have
\begin{align*}
\left\langle b_{\lambda}^n(U)\xi,b_{\lambda}^m(U)\xi
\right\rangle
&=
\left\langle b_{\lambda}^{n-m}(U)\xi, \xi\right\rangle\\
&=\left\langle \psi(U) b_{\lambda}^{n-m}(U)\xi, \psi(U)\xi\right\rangle\\
&=\left\langle \psi(U) b_{\lambda}^{n-m}(U)\xi,
\psi(T)\xi\right\rangle\\
&=\left\langle (\psi b_{\lambda}^{n-m})(T)\xi,
\psi(T)\xi\right\rangle\\
&=0
\end{align*}
whence $\{b_{\lambda}^n(U)\xi\}_{n\in\Z}$ is an orthonormal set
which generates a Hilbert space $\kil_0\subset \kil$. Define now an
operator $\Lambda: \kil_0\to L^2$ such that
$$
\Lambda b_{\lambda}^n(U)\xi=b_{\lambda}^n k_{\lambda}
$$
for every integer $n$, where $k_{\lambda}\in H^\infty$ is defined as
$$
k_{\lambda}(z)=\frac{1-|\lambda|^2}{1-\ol{\lambda}z}.
$$
It is easily verified that $\Lambda$ is unitary and $\Lambda
b_{\lambda}(U)=b_{\lambda}(M_z) \Lambda$, where $M_z$ denotes
the unitary operator of multiplication by $z$ on $L^2$. Since Blaschke factors can be uniformly approximated on $\ol{\D}$ by
polynomials and
$$
b_{-\lambda}\circ b_{\lambda}(z)=z=b_{\lambda}\circ b_{-\lambda}(z),
$$
we see that
$$
\Lambda U=M_z \Lambda,
$$
$$
\bigvee_{n=0}^\infty b_{\lambda}^n(U)\xi=
\bigvee_{n=0}^\infty U^n \xi
$$
and
$$
\bigvee_{n=0}^\infty b_{\lambda}^nk_{\lambda}=
\bigvee_{n=0}^\infty z^n k_{\lambda}.
$$
If we put $\hil_0=\bigvee_{n=0}^\infty b_{\lambda}^n(U)\xi$, then clearly $\hil_0$ is invariant under $U$ and
\begin{align*}
\Lambda \hil_0
=\bigvee_{n=0}^\infty z^n k_{\lambda}
=H^2
\end{align*}
since
a straightforward calculation shows that $k_{\lambda}$ is an outer
function.
Moreover, we find
\begin{align*}
\Lambda \phi(T)\xi &=\Lambda
\phi(U)\xi\\
&=\phi(M_z)k_{\lambda}\\
&=\phi k_{\lambda}\\
&=\phi(S(\theta)) k_{\lambda}
\end{align*}
for every inner divisor $\phi$ of $\psi$,
and thus $\Lambda \hil=H(\theta)$ by Lemma \ref{l-basis} (here we
used the well-known fact that $k_{\lambda}$ is cyclic for
$S(\theta)$). In particular, we see that
\begin{align*}
\hil =\Lambda^* H(\theta)\subset \Lambda^* H^2= \hil_0.
\end{align*}
If we set $W=\Lambda|\hil$, then we obtain another unitary
operator $W:\hil\to H(\theta)$. Using $\Lambda \hil=H(\theta)$ along
with $\Lambda (\hil_0\ominus \hil)=\theta H^2$, we conclude that
$$
P_{H(\theta)}\Lambda|\hil_0=\Lambda P_{\hil}|\hil_0.
$$
Since $U\hil_0\subset\hil_0$, we have $U\hil\subset \hil_0$ and
\begin{align*}
WT&=(\Lambda|\hil)T\\
&=\Lambda P_{\hil}U| \hil\\
&=(\Lambda P_{\hil}|\hil_0)U| \hil\\
&=(P_{H(\theta)}\Lambda|\hil_0) U|\hil\\
&=P_{H(\theta)}\Lambda U|\hil\\
&=P_{H(\theta)}M_z \Lambda|\hil\\
&=(P_{H(\theta)}M_z|H(\theta))\Lambda|\hil\\
&=S(\theta)(\Lambda|\hil)\\
&=S(\theta)W
\end{align*}
so that $T$ is unitarily equivalent to $S(\theta)$.
\end{proof}

Recall now the following well-known consequence of the commutant lifting
theorem (see \cite{sarason1967}).

\begin{theorem}\label{t-jbcomm}
The map
$$
u+\theta H^\infty \mapsto u(S(\theta))
$$
establishes an isometric algebra isomorphism between
$H^\infty/\theta H^\infty$ and $\{S(\theta)\}'$. In particular,
$$
\|u(S(\theta))\|=\inf\{\|u+\theta
f\|_{H^\infty}: f\in H^\infty \}
$$
for every $u\in H^\infty$.
\end{theorem}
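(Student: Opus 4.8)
The plan is to derive the statement from three standard facts: the identification of the kernel of the Sz.-Nagy--Foias functional calculus of $S(\theta)$, the commutant lifting theorem, and the elementary description of the commutant of the unilateral shift on $H^2$. Since the result is classical, the task is essentially to record how these pieces fit together, the complete argument being in \cite{sarason1967}.

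First I would check that $\Psi\colon u+\theta H^\infty\mapsto u(S(\theta))$ is a well-defined injective algebra homomorphism. Because $S(\theta)$ is of class $C_0$ with minimal function $\theta$, the kernel of its $H^\infty$ functional calculus is exactly $\theta H^\infty$; hence $u(S(\theta))$ depends only on the coset $u+\theta H^\infty$, and $u(S(\theta))=0$ forces $u\in\theta H^\infty$, which gives injectivity. Multiplicativity is immediate since $\Phi$ is an algebra homomorphism. Property (i) of $\Phi$ gives $\|u(S(\theta))\|\le\|u\|_{H^\infty}$, and applying this to $u+\theta f$ for an arbitrary $f\in H^\infty$ already yields
$$
\|u(S(\theta))\|\le\inf\{\|u+\theta f\|_{H^\infty}:f\in H^\infty\}.
$$

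The substantive step is surjectivity onto $\{S(\theta)\}'$ together with the reverse norm inequality, and for this I would invoke commutant lifting. Recall that $S(\theta)$ is the compression of the shift $S$ on $H^2$ to $H(\theta)=H^2\ominus\theta H^2$, with $\theta H^2$ invariant under $S$, so that $u(S(\theta))=P_{H(\theta)}\, u(S)|H(\theta)$ for $u\in H^\infty$. Given $X\in\{S(\theta)\}'$, the commutant lifting theorem provides $Y\in B(H^2)$ with $YS=SY$, $\|Y\|=\|X\|$, and $P_{H(\theta)}Y|H(\theta)=X$; since the commutant of $S$ on $H^2$ consists exactly of the multiplication operators $M_v$ with $v\in H^\infty$, for which $\|M_v\|=\|v\|_{H^\infty}$, we get $Y=M_v$ for some $v$ with $\|v\|_{H^\infty}=\|X\|$, and compressing to $H(\theta)$ gives $X=v(S(\theta))=\Psi(v+\theta H^\infty)$. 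This shows $\Psi$ is onto. Running the same argument with $X=u(S(\theta))$ for a fixed $u$ produces $v\in H^\infty$ with $v(S(\theta))=u(S(\theta))$, hence $v-u\in\theta H^\infty$, say $v=u+\theta f$, and $\|u+\theta f\|_{H^\infty}=\|v\|_{H^\infty}=\|u(S(\theta))\|$; thus the infimum above is attained and equals $\|u(S(\theta))\|$, which simultaneously shows that $\Psi$ is isometric and establishes the displayed formula. The only genuine input is the commutant lifting theorem itself --- equivalently, the norm-preserving nature of the lift, which is precisely Sarason's theorem --- so there is no further obstacle to overcome.
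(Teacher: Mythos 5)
Your argument is correct and is exactly the route the paper intends: the paper offers no proof of its own, simply citing Sarason \cite{sarason1967} and the commutant lifting theorem, which is precisely the machinery you deploy (lift an element of $\{S(\theta)\}'$ norm-preservingly to the commutant of $S$ on $H^2$, identify it as $M_v$ with $\|M_v\|=\|v\|_{H^\infty}$, and compress back). The bookkeeping steps --- well-definedness and injectivity via $\ker\Phi=\theta H^\infty$, the easy inequality from contractivity of the functional calculus, and the attainment of the infimum --- are all handled correctly.
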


We close this section by stating a simpler version of Theorem
\ref{t-maxue}.

\begin{corollary}\label{c-ciue}
Let $T\in B(\hil)$ be a multiplicity-free operator of class $C_0$
whose minimal function is a Blaschke product
$\theta$. Assume that the map
$$
\Psi:H^\infty(T)\to H^\infty (S(\theta))
$$
defined by $\Psi(u(T))=u(S(\theta))$ is contractive. Then, $T$ is
unitarily equivalent to $S(\theta)$.
\end{corollary}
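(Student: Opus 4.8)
The plan is to reduce Corollary \ref{c-ciue} to Theorem \ref{t-maxue}, so that all that remains to be checked is that $\|\psi(T)\|=1$ for some big inner divisor $\psi$ of $\theta$. We may assume $\theta$ is non-constant, since otherwise $\hil=\{0\}$ and the conclusion is immediate. Then $\theta$ has a zero $\lambda\in\D$, and $\psi=\theta/b_{\lambda}$ is a big divisor of $\theta$.

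The key point is the computation $\|\psi(S(\theta))\|=1$. By Theorem \ref{t-jbcomm}, $\|\psi(S(\theta))\|=\inf\{\|\psi+\theta f\|_{H^\infty}:f\in H^\infty\}$. Writing $\theta=\psi b_{\lambda}$ and using that $|\psi|=1$ almost everywhere on the unit circle, one gets $\|\psi+\theta f\|_{H^\infty}=\|1+b_{\lambda}f\|_{H^\infty}$ for every $f\in H^\infty$; evaluating at $z=\lambda$, where $b_{\lambda}$ vanishes, shows this quantity is at least $1$, while the choice $f=0$ realizes the value $1$. Hence the infimum equals $1$. (Alternatively, this is Theorem \ref{t-jbcomm} applied to the block $S(b_{\lambda})$ and the constant function $1$.)

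Finally, the contractivity of $\Psi$ gives $\|\psi(S(\theta))\|=\|\Psi(\psi(T))\|\leq\|\psi(T)\|$, so $\|\psi(T)\|\geq 1$; since $T$ is a contraction and $\psi$ is inner, property (i) of the functional calculus forces $\|\psi(T)\|\leq 1$. Therefore $\|\psi(T)\|=1$, and Theorem \ref{t-maxue} shows that $T$ is unitarily equivalent to $S(\theta)$. There is no substantial obstacle in this argument: the only step requiring a little care is the norm identity $\|\psi(S(\theta))\|=1$, and once the direction of the inequality supplied by $\Psi$ is kept straight the rest is immediate.
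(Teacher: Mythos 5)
Your proof is correct and follows essentially the same route as the paper: reduce to Theorem \ref{t-maxue} by showing $\|(\theta/b_{\lambda})(T)\|=1$, using contractivity of $\Psi$ together with the computation $\|(\theta/b_{\lambda})(S(\theta))\|=1$ via Theorem \ref{t-jbcomm}. The only difference is that you spell out why the infimum $\inf\{\|1+b_{\lambda}f\|_{H^\infty}:f\in H^\infty\}$ equals $1$, which the paper simply asserts.
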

\begin{proof}
This follows directly from Theorem \ref{t-maxue}. Indeed, if $\lambda$ is a zero of $\theta$, then
\begin{align*}
\left\|\left(\frac{\theta}{b_{\lambda}}\right)(T)\right\| &\geq \left\|\Psi\left(\left(\frac{\theta}{b_{\lambda}}\right)(T)\right)\right\| \\
&= 
\left\|\left(\frac{\theta}{b_{\lambda}}\right)(S(\theta))\right\|\\
&=\inf\left\{\left\|\frac{\theta}{b_{\lambda}}+\theta
f\right\|:f\in H^\infty
\right\}\\
&=\inf\left\{\left\|1+b_{\lambda} f\right\|:f\in H^\infty \right\}\\
&=1
\end{align*}
where we used Theorem \ref{t-jbcomm}.
\end{proof}
Note that in the setting of that corollary, we do not need to assume the irreducibility of the commutant (compare with Theorem \ref{c-bdryue}).

\section{Similarity and lower bounds for big divisors of finite Blaschke products}

The focus of this section
shifts from unitary equivalence to similarity. Let $T$ be a multiplicity-free operator of class $C_0$ whose minimal function $\theta$ is a Blaschke product. We saw in Section 3 (Theorem \ref{t-maxue}) that under the assumption that $\|\psi(T)\|=1$ for some big divisor $\psi$ of $\theta$, then $T$ and $S(\theta)$ must be unitarily equivalent. In this final section, we investigate the possibility of obtaining a weaker conclusion, namely similarity, from a weaker assumption on the norm of $\psi(T)$. This problem was studied in \cite{clouatre2013} where the following partial result was obtained.

\begin{theorem}\label{t-simalgeta}
Let $T_1\in B(\hil_1), T_2\in B(\hil_2)$ be multiplicity-free
operators of class $C_0$ with minimal function
$\theta=b_{\lambda_1}\ldots b_{\lambda_N}$. Define
$$
\eta=\sup_{1\leq j,k\leq
N}\frac{|b_{\lambda_j}(\lambda_k)|^{1/2}}{(1-\max
\{|\lambda_j|,|\lambda_k| \}^2)^{1/2}}.
$$
Assume that
$$
\|(\theta/b_{\lambda_N})(T_1)\|>  \beta+5\sqrt{2}\eta.
$$
and
$$
\|(\theta/b_{\lambda_N})(T_2)\|> \beta+5\sqrt{2}\eta
$$
for some constant $\beta$ satisfying
$$
\left(1-\frac{1}{(N-1)^2}\right)^{1/2}<\beta<1.
$$
Then, there exists an invertible operator $X:\hil_1\to \hil_2$ such
that $XT_1=T_2 X$ and
$$
\max\{\|X\|,\|X^{-1}\| \}\leq C(\beta,N)
$$
where $ C(\beta,N)>0$ is a constant depending only on
$\beta$ and $N$.
\end{theorem}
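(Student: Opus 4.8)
The plan is to reduce everything to a comparison of two bases through their Gram matrices. Set $\psi=\theta/b_{\lambda_N}$ and $\phi_k=b_{\lambda_1}\cdots b_{\lambda_k}$ for $0\le k\le N-1$, and fix $i\in\{1,2\}$. By Theorem \ref{t-invsub}, $\ol{\psi(T_i)\hil_i}=\ker b_{\lambda_N}(T_i)$ is invariant for $T_i$ with $T_i$-minimal function $b_{\lambda_N}$, hence one-dimensional by Theorem \ref{t-classification}; thus $\psi(T_i)$ has rank one and attains its norm at some unit vector $\xi_i$, with $\delta_i:=\|\psi(T_i)\xi_i\|=\|\psi(T_i)\|>\beta+5\sqrt2\,\eta$. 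Since $\psi(T_i)\xi_i\ne 0$ and $\psi$ is a big divisor of $\theta$, the argument proving Lemma \ref{l-cyclic} (which only uses $\psi(T_i)\xi_i\ne 0$) shows $\xi_i$ is cyclic for $T_i$, so the vectors $v^{(i)}_k:=\phi_k(T_i)\xi_i$, $0\le k\le N-1$, form a basis of $\hil_i$ by Lemma \ref{l-basisfinite}. Because $T_1$ and $T_2$ share the minimal function $\theta$ and $\xi_1,\xi_2$ are cyclic, the prescription $u(T_1)\xi_1\mapsto u(T_2)\xi_2$ ($u\in H^\infty$) is a well-defined linear bijection $X\colon\hil_1\to\hil_2$ with $XT_1=T_2X$ and $Xv^{(1)}_k=v^{(2)}_k$; it remains to estimate $\|X\|$ and $\|X^{-1}\|$.

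Expanding a vector of $\hil_1$ in the basis $\{v^{(1)}_k\}$ and its $X$-image in $\{v^{(2)}_k\}$ gives $\|X\|^2\le\lambda_{\max}(G_2)/\lambda_{\min}(G_1)$ and $\|X^{-1}\|^2\le\lambda_{\max}(G_1)/\lambda_{\min}(G_2)$, where $G_i=[\langle v^{(i)}_j,v^{(i)}_k\rangle]$; since all entries have modulus at most $1$, $\lambda_{\max}(G_i)\le N$, so the problem reduces to bounding $\lambda_{\min}(G_i)$ below by a positive constant depending only on $\beta$ and $N$. Its diagonal is governed by the hypothesis: as $\phi_k$ divides $\psi$, $\|v^{(i)}_k\|\ge\|\psi(T_i)\xi_i\|=\delta_i$. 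For an off-diagonal entry with $j<k$, let $U_i$ on $\kil_i\supset\hil_i$ be the minimal unitary dilation of $T_i$ and put $r_j=(I-P_{\hil_i})\phi_j(U_i)\xi_i$, so that $\|r_j\|=\sqrt{1-\|v^{(i)}_j\|^2}\le\sqrt{1-\delta_i^2}$; using $\xi_i,v^{(i)}_k\in\hil_i$ and the unitarity of $\phi_j(U_i)$ one finds $\langle v^{(i)}_j,v^{(i)}_k\rangle=\langle\xi_i,(\phi_k/\phi_j)(T_i)\xi_i\rangle-\langle r_j,\phi_k(U_i)\xi_i\rangle$, whence $|\langle v^{(i)}_j,v^{(i)}_k\rangle|\le|\langle\xi_i,(\phi_k/\phi_j)(T_i)\xi_i\rangle|+\sqrt{1-\delta_i^2}$.

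The crux is therefore an estimate $|\langle\xi_i,g(T_i)\xi_i\rangle|\le\eta^{2m}+(\text{a controlled multiple of }\sqrt{1-\delta_i^2})$ whenever $g$ is a product of $m$ distinct Blaschke factors dividing $\psi$; the power $\eta^{2m}$ will make the off-diagonal entries of $G_i$ decay away from the diagonal. The model case is the guide: for $S(\theta)$ the norm of $\psi(S(\theta))$ is attained at $k_{\lambda_N}/\|k_{\lambda_N}\|$, each $g k_{\lambda_N}$ already lies in $H(\theta)$, and a reproducing kernel computation gives $\langle\xi,g(S(\theta))\xi\rangle=\ol{g(\lambda_N)}$ for this vector $\xi$, whose modulus is a product of factors $|b_{\lambda}(\lambda_N)|\le\eta^2$ by the very definition of $\eta$. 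For a general $T_i$ one recovers the same bound from the maximality of $\xi_i$, manipulating the Blaschke factors by means of Lemma \ref{l-mobius} exactly as in the proof of Lemma \ref{l-cyclic}, the defect $\sqrt{1-\delta_i^2}$ measuring the deviation from the model. Granting this, a Gershgorin-type estimate yields $\lambda_{\min}(G_i)\ge\delta_i^2-2\eta^2/(1-\eta^2)-(N-1)c_N\sqrt{1-\delta_i^2}$ with $c_N$ depending only on $N$, and here the two hypotheses combine: $\delta_i>\beta+5\sqrt2\,\eta$ lets the $\eta$-terms be absorbed into $\delta_i^2$, while $\beta>\sqrt{1-1/(N-1)^2}$, that is $(N-1)\sqrt{1-\beta^2}<1$, keeps the accumulated defect terms $\sqrt{1-\delta_i^2}<\sqrt{1-\beta^2}$ strictly below $\delta_i^2$. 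Hence $\lambda_{\min}(G_i)\ge c(\beta,N)>0$, and $\max\{\|X\|,\|X^{-1}\|\}\le\sqrt{N/c(\beta,N)}=:C(\beta,N)$.

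The step I expect to be hardest is the crucial estimate of $|\langle\xi_i,g(T_i)\xi_i\rangle|$ together with the subsequent linear-algebra bookkeeping: tracking how the quantity $\eta$ and the defect $1-\|\psi(T_i)\|^2$ propagate through the minimal unitary dilation and through the repeated use of Lemma \ref{l-mobius} is what pins down the precise constants $5\sqrt2$ and $\sqrt{1-1/(N-1)^2}$, and making the $N-1$ off-diagonal contributions in each row dominated by the diagonal of $G_i$ is the real content of the theorem; the construction of $X$ and the reduction to Gram matrices are purely formal.
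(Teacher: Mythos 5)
A preliminary remark: this paper does not actually prove Theorem \ref{t-simalgeta}; it is imported from \cite{clouatre2013}, as is the related technical tool Theorem \ref{t-simalg}, so your argument can only be judged on its own terms. The overall architecture is reasonable and in the spirit of Theorem \ref{t-simalg}: produce a cyclic unit vector $\xi_i$ at which the rank-one operator $\psi(T_i)$ attains its norm, pass to the basis $\phi_k(T_i)\xi_i$ of Lemma \ref{l-basisfinite}, intertwine via $u(T_1)\xi_1\mapsto u(T_2)\xi_2$, and control $\|X\|$, $\|X^{-1}\|$ by the extreme eigenvalues of the two Gram matrices. But the proof has a genuine gap exactly where you flag it, and the tools you invoke cannot close it. Your ``crucial estimate'' $|\langle\xi_i,g(T_i)\xi_i\rangle|\le\eta^{2m}+C_N\sqrt{1-\delta_i^2}$ is verified only in the model case $T_i=S(\theta)$, where it is a reproducing-kernel identity; for general $T_i$ you appeal to Lemma \ref{l-mobius} ``as in the proof of Lemma \ref{l-cyclic}'', but Lemma \ref{l-mobius} is a \emph{lower} bound on $\|b_\mu(T)h\|$, and the cyclicity argument only propagates such lower bounds. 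It provides no mechanism for bounding the inner product $\langle\xi_i,g(T_i)\xi_i\rangle$ from \emph{above}, which is what the off-diagonal decay of $G_i$ requires; some argument converting near-maximality of $\xi_i$ into near-eigenvector behaviour is needed and is not supplied. (A smaller inaccuracy of the same kind: the proof of Lemma \ref{l-cyclic} does not use only $\psi(T_i)\xi_i\ne0$; it uses $\|\psi(T)\xi\|=1$ to make the factor $(\delta-|z|)/(1+|z|)$ positive, so for $\delta_i<1$ you additionally need $\delta_i>|z|$ with $|z|\le\eta^2$ --- this does follow from the hypotheses, but must be said.)

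Second, even granting the crucial estimate, the quantitative bookkeeping does not close as written. Your Gershgorin bound needs $\delta_i^2>2\eta^2/(1-\eta^2)+(N-1)c_N\sqrt{1-\delta_i^2}$, while the hypothesis $\beta>\bigl(1-1/(N-1)^2\bigr)^{1/2}$ only yields $(N-1)\sqrt{1-\delta_i^2}<1$; since $\delta_i^2<1$, the required inequality can fail whenever $c_N\ge1$ and $(N-1)\sqrt{1-\delta_i^2}$ is close to $1$, and you never identify $c_N$ or show it is small enough. This is precisely where the constants $5\sqrt{2}$ and $\bigl(1-1/(N-1)^2\bigr)^{1/2}$ are supposed to be earned, so it cannot be left as bookkeeping. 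A route more consistent with how the paper itself operates (compare the proof of Theorem \ref{t-simfiniteBP}) is to take Theorem \ref{t-simalg} as the black box and show that $\|(\theta/b_{\lambda_N})(T_i)\xi_i\|>\beta+5\sqrt{2}\eta$ forces $\|\phi(T_i)\xi_i\|\ge\beta$ for every relevant inner divisor $\phi$ of $\theta$, the loss being controlled by $\eta$; your proposal instead attempts to re-derive a version of Theorem \ref{t-simalg} from scratch and leaves its hardest step unproved.
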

We should remark at this point that in the above setting the spaces $\hil_1$ and $\hil_2$ are finite dimensional, and thus Theorem \ref{t-classification} implies that $T_1$ and $T_2$ must be similar. Thus, the relevance of Theorem \ref{t-simalgeta} lies in the control over the norm of the similarity rather than in its existence. This control allows one to obtain similarity results for infinite Blaschke products having certain nice properties. We refer the curious reader to \cite{clouatre2013} for such applications  related to interpolation by bounded holomorphic
functions on the unit disc.

On the other hand, the presence of the
quantity $\eta$ in the previous statement is unexpected and seems
artificial. Moreover, it has the unpleasant consequence of
restricting the minimal functions to which the theorem applies since clearly $\eta$ must be smaller than $(5\sqrt{2})^{-1}$. The
main result of this section removes $\eta$ completely at the cost of a
slightly stronger assumption on the operators (which is
automatically satisfied by Jordan blocks, however). In particular, it applies
to arbitrary finite Blaschke products.

The main technical tool we require is the following fact which can be inferred from the work done in
\cite{clouatre2013}.

\begin{theorem}\label{t-simalg} Let $T_1\in B(\hil_1), T_2\in
B(\hil_2)$ be multiplicity-free operators of class $C_0$ whose
minimal function is a finite Blaschke product $\theta$ with $N$ roots. Assume
that there exist unit cyclic vectors $\xi_1\in \hil_1,\xi_2\in
\hil_2$ and a constant $\beta$ with the property that
$$
\|\phi(T_1)\xi_1\|\geq \beta>\left(1-\frac{1}{(N-1)^2}\right)^{1/2}
$$
and
$$
\|\phi(T_2)\xi_2\|\geq \beta> \left(1-\frac{1}{(N-1)^2}\right)^{1/2}
$$
for every inner divisor $\phi$ of $\theta$. Then, there exists an
invertible operator $X:\hil_1\to \hil_2$ such that $XT_1=T_2 X$ and
$$
\max\{\|X\|,\|X^{-1}\| \}\leq C(\beta,N)
$$
where $ C(\beta,N)>0$ is a constant depending only on
$\beta$ and $N$.
\end{theorem}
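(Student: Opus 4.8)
The plan is to write down the obvious candidate intertwiner and then to control its norm and the norm of its inverse through a single distance estimate along the natural filtration of each $\hil_i$ by $T_i$-invariant subspaces. Since $T_1$ and $T_2$ have the same minimal function $\theta$, Lemma \ref{l-basisfinite} shows that $\dim\hil_1=\dim\hil_2=N$. Moreover, since $\xi_i$ is cyclic for the $C_0$ operator $T_i$ whose minimal function is $\theta$, the kernel of the map $q\mapsto q(T_i)\xi_i$ from $H^\infty$ onto $\hil_i$ equals $\theta H^\infty$; hence the rule $X\big(q(T_1)\xi_1\big)=q(T_2)\xi_2$ unambiguously defines a linear bijection $X\colon\hil_1\to\hil_2$, and the identity $X T_1\,q(T_1)\xi_1=X(zq)(T_1)\xi_1=(zq)(T_2)\xi_2=T_2\,q(T_2)\xi_2=T_2\,X\,q(T_1)\xi_1$ yields $XT_1=T_2X$. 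Everything then comes down to bounding $\|X\|$ and $\|X^{-1}\|$ in terms of $\beta$ and $N$.

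Next I would introduce, for $\theta=b_{\lambda_1}\cdots b_{\lambda_N}$, the partial products $\phi_k=b_{\lambda_1}\cdots b_{\lambda_k}$ (with $\phi_0=1$) and the vectors $v^{(i)}_k=(\theta/\phi_k)(T_i)\xi_i$ for $1\le k\le N$; note that $\|v^{(i)}_k\|\le1$ since $\theta/\phi_k$ is inner, and that $Xv^{(1)}_k=v^{(2)}_k$ by construction. The key point is the estimate
$$
\operatorname{dist}\big(v^{(i)}_k,\ \operatorname{span}\{v^{(i)}_1,\dots,v^{(i)}_{k-1}\}\big)\ \ge\ \beta\qquad(1\le k\le N).
$$
To see it, observe that $v^{(i)}_l\in\ker\phi_l(T_i)\subseteq\ker\phi_{k-1}(T_i)$ for $l<k$ (the inclusion because $\phi_l$ divides $\phi_{k-1}$), so it suffices to bound the distance of $v^{(i)}_k$ to $\ker\phi_{k-1}(T_i)$; and if $w$ lies in that kernel then, using $\phi_{k-1}\cdot(\theta/\phi_k)=\theta/b_{\lambda_k}$,
$$
\phi_{k-1}(T_i)\big(v^{(i)}_k-w\big)=\big(\theta/b_{\lambda_k}\big)(T_i)\xi_i ,
$$
so that $\|v^{(i)}_k-w\|\ge\|\phi_{k-1}(T_i)\|^{-1}\,\|(\theta/b_{\lambda_k})(T_i)\xi_i\|\ge\beta$, because $\|\phi_{k-1}(T_i)\|\le1$ and $\theta/b_{\lambda_k}$ is a proper inner divisor of $\theta$. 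In particular $v^{(i)}_1,\dots,v^{(i)}_N$ are linearly independent, hence a basis of $\hil_i$.

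I would then conclude by a QR-type argument. Gram--Schmidt applied to $v^{(i)}_1,\dots,v^{(i)}_N$ in this order produces an orthonormal basis together with an upper-triangular matrix $R^{(i)}$ whose diagonal entries equal the distances above (hence are $\ge\beta$) and whose $k$-th column has Euclidean norm $\|v^{(i)}_k\|\le1$; thus $\|R^{(i)}\|\le\sqrt N$, and splitting $R^{(i)}$ into its diagonal part and a strictly upper-triangular (nilpotent) part with entries of modulus $\le\beta^{-1}$ gives, via the Neumann series, $\|(R^{(i)})^{-1}\|\le C_1(\beta,N)$ for an explicit constant $C_1(\beta,N)$ depending only on $\beta$ and $N$. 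Since $X$ carries the basis $(v^{(1)}_k)_k$ onto $(v^{(2)}_k)_k$, for $x=\sum_k c_kv^{(1)}_k$ one has $\|x\|=\|R^{(1)}c\|$ and $\|Xx\|=\|R^{(2)}c\|$, whence $\|X\|\le\|R^{(2)}\|\,\|(R^{(1)})^{-1}\|\le\sqrt N\,C_1(\beta,N)$ and symmetrically for $\|X^{-1}\|$; taking $C(\beta,N)=\sqrt N\,C_1(\beta,N)$ finishes the proof.

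The step I expect to be the real content — and which the paper extracts from \cite{clouatre2013} — is making the lower Riesz-basis bound for $(v^{(i)}_k)_k$ uniform in the location of the zeros $\lambda_1,\dots,\lambda_N$, i.e.\ precisely what the unwanted quantity $\eta$ of Theorem \ref{t-simalgeta} was compensating for. The distance estimate above is exactly what removes that dependence, since it only uses that $\phi_{k-1}(T_i)$ is a contraction, regardless of where the zeros lie. I would also remark that the argument sketched here never uses the numerical threshold $\beta>(1-(N-1)^{-2})^{1/2}$: any $\beta\in(0,1]$ suffices, at the cost of a larger constant $C_1(\beta,N)$, so the stated hypothesis is more than enough.
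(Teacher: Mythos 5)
Your proof is correct, and it is genuinely different from what the paper does: the paper gives no argument at all for Theorem \ref{t-simalg}, simply asserting that it ``can be inferred from the work done in \cite{clouatre2013}'', whereas you supply a complete, self-contained and elementary proof. Each step checks out: the map $q\mapsto q(T_i)\xi_i$ has kernel exactly $\theta H^\infty$ (if $q(T_i)\xi_i=0$ then $q(T_i)$ vanishes on the cyclic subspace generated by $\xi_i$, hence $q(T_i)=0$), so $X$ is well defined, bijective and intertwining; the distance estimate is exactly right, since $\mathrm{span}\{v^{(i)}_1,\dots,v^{(i)}_{k-1}\}\subseteq\ker\phi_{k-1}(T_i)$ and applying the contraction $\phi_{k-1}(T_i)$ to $v^{(i)}_k-w$ produces $(\theta/b_{\lambda_k})(T_i)\xi_i$, whose norm is at least $\beta$ by hypothesis (applied only to the big divisors $\theta/b_{\lambda_k}$); and the QR/Neumann-series bookkeeping correctly yields $\|X\|,\|X^{-1}\|\le\sqrt N\,C_1(\beta,N)$. (One cosmetic slip: the entries of modulus at most $\beta^{-1}$ are those of $D^{-1}S$, not of the strictly upper-triangular part $S$ itself, whose entries are bounded by $1$; this does not affect the conclusion.) Your closing observation is also accurate: your argument needs no lower threshold on $\beta$, so it proves a formally stronger statement than the one quoted. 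The only caveat worth recording is quantitative: your constant $C_1(\beta,N)$, coming from a Neumann series for an $N\times N$ triangular matrix, necessarily blows up as $N\to\infty$, whereas the point of the threshold $\beta>(1-(N-1)^{-2})^{1/2}$ in \cite{clouatre2013} is to obtain constants with controlled growth in $N$, which is what makes the applications to infinite Blaschke products alluded to after Theorem \ref{t-simalgeta} possible. For the theorem as stated here, and for its use in the proof of Theorem \ref{t-simfiniteBP} (where the constant is only required to depend on $N$, $\beta$ and $\beta'$), your proof is entirely sufficient.
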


Before we proceed, we establish some auxiliary results.
The first one is well-known, but we provide the proof for the reader's convenience.

\begin{lemma}\label{l-coronasim}
Let $\theta_1,\theta_2\in H^\infty$ be inner functions such that
there exist $u_1,u_2\in H^\infty$ with the property that $\theta_1
u_1+\theta_2 u_2=1$. Let $T\in B(\hil)$ be an operator of class
$C_0$ with minimal function $\theta_1 \theta_2$. Then, there exists
an invertible operator $X$ such that
$$
XTX^{-1}= T|\ker \theta_1(T)\oplus T|\ker \theta_2(T),
$$
$$
\|X\|\leq \left(\|u_1\|^2_{H^\infty}+\|u_2\|^2_{H^\infty} \right)^{1/2}
$$
and
$$
\|X^{-1}\|\leq \sqrt{2}.
$$
\end{lemma}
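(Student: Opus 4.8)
The plan is to exploit the corona identity $\theta_1u_1+\theta_2u_2=1$ to decompose $\hil$ as a (non-orthogonal) algebraic direct sum of $\ker\theta_1(T)$ and $\ker\theta_2(T)$ by means of two commuting idempotents, and then to let $X$ be the natural identification of $\hil$ with the corresponding \emph{orthogonal} direct sum.

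First I would introduce the operators $P_1=\theta_2(T)u_2(T)$ and $P_2=\theta_1(T)u_1(T)$; the $H^\infty$ functional calculus is available here since $T$, being of class $C_0$, is completely non-unitary. Applying the functional calculus to $\theta_1u_1+\theta_2u_2=1$ gives $P_1+P_2=I$, and since the minimal function of $T$ is $\theta_1\theta_2$ we have $(\theta_1\theta_2)(T)=0$, whence $P_1P_2=P_2P_1=u_1(T)u_2(T)(\theta_1\theta_2)(T)=0$; therefore $P_1^2=P_1(I-P_2)=P_1$ and likewise $P_2^2=P_2$, so that $P_1,P_2$ are idempotents. A short check shows $\ran P_1=\ker\theta_1(T)$ and $\ran P_2=\ker\theta_2(T)$: the inclusions $\subseteq$ follow from $\theta_1(T)P_1=(\theta_1\theta_2)(T)u_2(T)=0$, and the reverse inclusions from the identity $P_1h=h-P_2h=h-u_1(T)\theta_1(T)h=h$ valid whenever $\theta_1(T)h=0$ (and symmetrically). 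Since $P_1$ and $P_2$ commute with $T$, both $\ker\theta_1(T)$ and $\ker\theta_2(T)$ are invariant for $T$.

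Next I would set $\kil=\ker\theta_1(T)\oplus\ker\theta_2(T)$, the orthogonal direct sum with each summand carrying the inner product inherited from $\hil$, and define $X\colon\hil\to\kil$ by $Xh=P_1h\oplus P_2h$, with two-sided inverse $(h_1,h_2)\mapsto h_1+h_2$ (this is a genuine inverse precisely because $P_1+P_2=I$), so $X$ is invertible. Because $P_1,P_2$ commute with $T$ we get $XTh=TP_1h\oplus TP_2h=\bigl(T|\ker\theta_1(T)\oplus T|\ker\theta_2(T)\bigr)Xh$, which is the asserted relation $XTX^{-1}=T|\ker\theta_1(T)\oplus T|\ker\theta_2(T)$. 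For the norm bounds, first
\[
\|Xh\|^2=\|P_1h\|^2+\|P_2h\|^2\leq\|u_2(T)h\|^2+\|u_1(T)h\|^2\leq\bigl(\|u_1\|^2_{H^\infty}+\|u_2\|^2_{H^\infty}\bigr)\|h\|^2,
\]
where I use that $\theta_1,\theta_2$ are inner, so $\|\theta_i(T)\|\leq1$, together with the contractivity of the functional calculus $\|u_i(T)\|\leq\|u_i\|_{H^\infty}$; this yields $\|X\|\leq(\|u_1\|^2_{H^\infty}+\|u_2\|^2_{H^\infty})^{1/2}$. For the inverse, $\|X^{-1}(h_1\oplus h_2)\|=\|h_1+h_2\|\leq\sqrt{2}\,(\|h_1\|^2+\|h_2\|^2)^{1/2}$ by Cauchy--Schwarz, so $\|X^{-1}\|\leq\sqrt{2}$.

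I do not expect any genuine obstacle: the whole argument stays inside the commutative $H^\infty$ functional calculus and amounts essentially to bookkeeping. The one point that needs attention is matching the idempotent built from $\theta_2$ with the kernel of $\theta_1(T)$ (and conversely), and noticing exactly where the hypotheses enter — the corona identity gives $P_1+P_2=I$, while membership of $\theta_1\theta_2$ in the kernel of the functional calculus is what forces $P_1P_2=0$ and hence idempotency. In fact the same proof would go through verbatim if $\theta_1\theta_2$ were merely a multiple of the minimal function of $T$.
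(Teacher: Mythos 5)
Your proposal is correct and is essentially the paper's own argument: the map $Xh=\theta_2(T)u_2(T)h\oplus\theta_1(T)u_1(T)h$ is exactly the operator used there, and the norm bounds are obtained the same way (contractivity of the $H^\infty$ calculus for $\|X\|$, Cauchy--Schwarz for $\|X^{-1}\|$). Your idempotent packaging ($P_1+P_2=I$, $P_1P_2=0$) is just a tidier way of organizing the paper's direct verification of injectivity and surjectivity, and your closing observation that only divisibility of $\theta_1\theta_2$ by the minimal function is needed is also accurate.
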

\begin{proof}
Define
$$
X:\hil \to \ker \theta_1(T)\oplus \ker \theta_2(T)
$$
as
$$
Xf=(\theta_2 u_2)(T)f\oplus(\theta_1 u_1)(T) f
$$
for every $f\in \hil$. If $Xf=0$, then
$$
f=(\theta_1 u_1+\theta_2 u_2)(T)f=0
$$
and thus $X$ is injective. Given $g_1\in \ker \theta_1(T)$ and
$g_2\in \ker \theta_2(T)$, we see that
\begin{align*}
Xg_1&=(\theta_2 u_2)(T)g_1\oplus 0\\
&=(1-\theta_1u_1)(T)g_1\oplus 0\\
&=g_1\oplus 0
\end{align*}
and
\begin{align*}
Xg_2&=0\oplus (\theta_1 u_1)(T)g_2\\
&=0\oplus (1-\theta_2 u_2)(T)g_2\\
&=0\oplus g_2
\end{align*}
which shows that $X$ is surjective. Notice also that
$$
XT=(T\oplus T)X.
$$
Therefore, we see that $T$ is similar to
$$
T|\ker \theta_1(T)\oplus T|\ker \theta_2(T).
$$
It remains only to estimate the norm of $X$ and $X^{-1}$. For
$f\in \hil$, we have
\begin{align*}
\|Xf\|&=\|(\theta_2 u_2)(T)f\oplus(\theta_1
u_1)(T) f\|\\
&=\left(\| (\theta_1 u_1)(T)f\|^2+\| (\theta_2 u_2)(T)f\|^2 \right)^{1/2} \\
&\leq  \left(\| (\theta_1 u_1)(T)\|^2+\| (\theta_2 u_2)(T)\|^2 \right)^{1/2}\|f\|\\
&\leq \left(\| u_1\|_{H^\infty}^2+\| u_2\|_{H^\infty}^2 \right)^{1/2}\|f\|
\end{align*}
and
\begin{align*}
\|f\|&=\|(\theta_1 u_1+\theta_2 u_2)(T)f\|\\
&\leq \|(\theta_1 u_1)(T)f\|+\|
(\theta_2 u_2)(T)f\|\\
&\leq \sqrt{2}(\|(\theta_1 u_1)(T)f\|^2+\|
(\theta_2 u_2)(T)f\|^2)^{1/2}\\
&= \sqrt{2} \|Xf\|
\end{align*}
by the Cauchy-Schwarz inequality. This completes the proof.
\end{proof}

\begin{lemma}\label{l-coronasimest}
Let $\theta_1,\theta_2\in H^\infty$ be inner functions such that
$$
\inf_{z\in \D} \{|\theta_1(z)|+|\theta_2(z)|\}=\delta>0.
$$Let $T\in B(\hil)$ be an operator of class
$C_0$ with minimal function $\theta_1 \theta_2$. Then, there exists
an invertible operator $X$ such that
$$
XTX^{-1}= T|\ker \theta_1(T)\oplus T|\ker \theta_2(T),
$$
$\|X^{-1}\|\leq \sqrt{2}$ and $\|X\|\leq C(\delta)$, where
$C(\delta)>0$ is a constant depending only on $\delta$.
\end{lemma}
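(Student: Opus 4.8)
The plan is to obtain this statement as a quantitative corollary of Lemma \ref{l-coronasim}, the one extra ingredient being the corona theorem of Carleson in its norm-controlled form. First I would put the hypothesis into the shape in which the corona theorem is customarily quoted: from $\inf_{z\in\D}\{|\theta_1(z)|+|\theta_2(z)|\}=\delta$ one gets
$$
|\theta_1(z)|^2+|\theta_2(z)|^2\geq \tfrac{1}{2}\bigl(|\theta_1(z)|+|\theta_2(z)|\bigr)^2\geq \frac{\delta^2}{2}
$$
for every $z\in\D$, using $a^2+b^2\geq\tfrac12(a+b)^2$ for $a,b\geq 0$. Since $\theta_1,\theta_2$ are inner, hence of $H^\infty$-norm $1$, the quantitative corona theorem then provides $u_1,u_2\in H^\infty$ with $\theta_1u_1+\theta_2u_2=1$ and $\|u_1\|_{H^\infty},\|u_2\|_{H^\infty}\leq C_0(\delta)$, where $C_0(\delta)>0$ depends on $\delta$ alone; the dependence of the corona constant on the number of generators plays no role here, that number being fixed at two.

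Next I would feed the pair $u_1,u_2$ directly into Lemma \ref{l-coronasim}. Its hypotheses are met verbatim --- $T$ is of class $C_0$ with minimal function $\theta_1\theta_2$, and the B\'ezout identity $\theta_1u_1+\theta_2u_2=1$ holds --- so it produces an invertible $X$ with $XTX^{-1}=T|\ker\theta_1(T)\oplus T|\ker\theta_2(T)$, with $\|X^{-1}\|\leq\sqrt{2}$ and $\|X\|\leq(\|u_1\|_{H^\infty}^2+\|u_2\|_{H^\infty}^2)^{1/2}$. Combining the last estimate with the corona bound gives $\|X\|\leq\sqrt{2}\,C_0(\delta)$, and setting $C(\delta)=\sqrt{2}\,C_0(\delta)$ yields exactly the claim.

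There is essentially no obstacle here: the only point deserving a moment of care is that the corona theorem must be invoked in a form where the constant is uniform over all admissible pairs $\theta_1,\theta_2$, i.e.\ depends on $\delta$ only and not on $\theta_1,\theta_2$ individually. The passage from the sum $|\theta_1|+|\theta_2|$ to the sum of squares costs nothing, and beyond that the argument is a direct application of Lemma \ref{l-coronasim}.
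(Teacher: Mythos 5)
Your proposal is correct and follows exactly the route the paper takes: the paper's proof is precisely ``combine Lemma \ref{l-coronasim} with the norm-controlled form of Carleson's corona theorem,'' and your write-up merely supplies the (routine) details of that combination. Nothing to add.
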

\begin{proof}
This is an easy consequence of Lemma \ref{l-coronasim} and the
estimates associated to Carleson's corona theorem (see
\cite{carleson1962} or Theorem 3.2.10 of \cite{nikolskii2002}).
\end{proof}

In applying that lemma, the following estimate will prove to be useful.

\begin{lemma}\label{l-lowerbound}
Let $E,F\subset \D$ be two finite subsets of cardinality at most
$N$, and let $\theta_E,\theta_F\in H^\infty$ be the associated
Blaschke products. Assume that there exists $r>0$ such that $|e-f|\geq r$ for every
$e\in E,f\in F$. Then,
$$
\inf_{z\in \D}\{|\theta_E(z)|+|\theta_F(z)|
\}>(r/4)^N.
$$
\end{lemma}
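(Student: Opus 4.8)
The plan is to reduce the claimed bound to a pointwise estimate, exploiting the geometric fact that a point of $\D$ cannot be close to both $E$ and $F$ once these sets are $r$-separated. We may assume $E$ and $F$ are non-empty --- otherwise one of $\theta_E,\theta_F$ is identically $1$ and the inequality is trivial --- so that necessarily $r<2$, and in particular $r/4<1/2<1$.

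First I would record the elementary bound
$$
|b_a(z)|=\frac{|z-a|}{|1-\overline{a}z|}>\frac{|z-a|}{2},\qquad a,z\in\D,
$$
which holds because $|1-\overline{a}z|\le 1+|a||z|<2$. Now fix $z\in\D$. If we had $\min_{e\in E}|z-e|<r/2$ and $\min_{f\in F}|z-f|<r/2$ simultaneously, then choosing $e\in E$ and $f\in F$ attaining these minima would give $|e-f|\le|e-z|+|z-f|<r$, contradicting the separation hypothesis. Hence at least one of these two distances is $\ge r/2$; say $\min_{e\in E}|z-e|\ge r/2$, the other case being symmetric. Then $|b_e(z)|>r/4$ for every $e\in E$, whence
$$
|\theta_E(z)|=\prod_{e\in E}|b_e(z)|>\left(\frac r4\right)^{\#E}\ge\left(\frac r4\right)^{N},
$$
using $\#E\le N$ and $0<r/4<1$. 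Either way we obtain $|\theta_E(z)|+|\theta_F(z)|>(r/4)^N$ for every $z\in\D$.

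Finally, to pass from this pointwise inequality to the strict inequality for the infimum, I would invoke the standard fact that a finite Blaschke product has modulus tending to $1$ at the boundary of the disc: there is a radius $\rho_0<1$ (depending only on $E$ and $F$) with $|\theta_E(z)|\ge 1/2>(r/4)^N$ whenever $|z|\ge\rho_0$, while on the compact disc $\{|z|\le\rho_0\}$ the continuous function $z\mapsto|\theta_E(z)|+|\theta_F(z)|$ attains a minimum, which by the previous paragraph exceeds $(r/4)^N$. Taking the smaller of the two bounds yields $\inf_{z\in\D}\{|\theta_E(z)|+|\theta_F(z)|\}>(r/4)^N$. Everything here is elementary manipulation of Blaschke factors; the one point requiring a moment's care is exactly this last step, namely ensuring the infimum is strictly (not merely weakly) above $(r/4)^N$, which is why the boundary behaviour of $\theta_E$ and $\theta_F$ has to be brought in.
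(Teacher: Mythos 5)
Your proof is correct and follows essentially the same route as the paper: the elementary bound $|b_a(z)|>|z-a|/2$ combined with the observation that a point $z\in\D$ cannot lie within $r/2$ of both $E$ and $F$. Your final paragraph securing the \emph{strict} inequality for the infimum via the boundary behaviour of finite Blaschke products addresses a point the paper's own proof glosses over (it concludes only the non-strict pointwise bound $\geq (r/4)^N$), so that extra care is a genuine, if minor, improvement rather than padding.
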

\begin{proof}
Throughout the proof we put
$$
d(A,B)=\inf\{|a-b|:a\in A,b\in B\}
$$
whenever $A,B\subset \C$. 

First note that
$$
|b_{\lambda}(z)|=\left|\frac{z-\lambda}{1-\ol{\lambda}z} \right|\geq
\frac{|z-\lambda|}{2}
$$
for every $z\in \D$. In particular, we see that
$$
|\theta_E(z)|\geq (r/4)^N
$$
for every $z\in \D$ such that $d(z,E)\geq r/2$. Now, if $d(z,E)< r/2$ then $d(z,F)\geq r/2$ in view of the triangle
inequality and of our assumption on the sets $E$ and $F$. Thus, we
conclude that
$$
|\theta_F(z)|\geq (r/4)^N
$$
if $d(z,E)<r/2$. Combining these inequalities yields
$$
|\theta_E(z)|+|\theta_F(z)|\geq (r/4)^N
$$
for every $z\in \D$.
\end{proof}

Next, we need an elementary combinatorial lemma.

\begin{lemma}\label{l-comb}
Let $\e>0$ and $\lambda_1,\ldots,\lambda_N\in \C$. Then, there exists an integer $1\leq k \leq N$
with the property that the set $\{\lambda_1,\ldots,\lambda_N\}$ can
be written as the disjoint union of
$$
E_k=\{\lambda_j: |\lambda_j-\lambda_1|<\e 2^{-(N+1-k)}\}
$$
and
$$
F_k=\{\lambda_j: |\lambda_j-\mu|\geq \e 2^{-(N+1-k)} \text{ for
every } \mu \in E_k\}.
$$
\end{lemma}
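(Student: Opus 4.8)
The plan is to reduce everything to a pigeonhole argument on a nested family of ``disks around $\lambda_1$''. Write $r_k = \e 2^{-(N+1-k)}$ for $1 \le k \le N$, so that $r_1 < r_2 < \cdots < r_N$ and, crucially, $r_{k+1} = 2 r_k$. With this notation $E_k = \{\lambda_j : |\lambda_j - \lambda_1| < r_k\}$, and these sets are nested: $E_1 \subseteq E_2 \subseteq \cdots \subseteq E_N$. The first observation I would record is that $E_k$ and $F_k$ are disjoint for \emph{every} $k$, with no hypothesis needed: since $\lambda_1 \in E_k$, any $\lambda_j$ belonging to $F_k$ satisfies $|\lambda_j - \lambda_1| \ge r_k$ and hence lies outside $E_k$. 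So the entire content of the lemma is that for some $k$ the union $E_k \cup F_k$ exhausts $\{\lambda_1, \ldots, \lambda_N\}$.

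The key step is the implication: if $E_k = E_{k+1}$ for some $k$ with $1 \le k \le N-1$, then that value of $k$ works. To see this, suppose $\lambda_j \notin E_k$ and $\lambda_j \notin F_k$. The second condition gives some $\mu \in E_k$ with $|\lambda_j - \mu| < r_k$, and since $|\mu - \lambda_1| < r_k$ we get $|\lambda_j - \lambda_1| < 2 r_k = r_{k+1}$, so $\lambda_j \in E_{k+1} = E_k$, contradicting $\lambda_j \notin E_k$. Hence every $\lambda_j$ lies in $E_k \cup F_k$, which combined with disjointness gives the desired decomposition. This is where the factor-$2$ spacing of the $r_k$ is used, and it is really the only place any care is needed.

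Finally I would dispose of the remaining case, in which all the inclusions are strict: $E_k \subsetneq E_{k+1}$ for each $k$ with $1 \le k \le N-1$. Since $\lambda_1 \in E_1$ we have $|E_1| \ge 1$, and each strict inclusion contributes at least one new element, so $|E_N| \ge 1 + (N-1) = N$; as $E_N \subseteq \{\lambda_1, \ldots, \lambda_N\}$ this forces $E_N = \{\lambda_1, \ldots, \lambda_N\}$. In that situation $F_N = \emptyset$, since any $\lambda_j$ lies in $E_N$ and is at distance $0 < r_N$ from itself, so $E_N \cup F_N = \{\lambda_1, \ldots, \lambda_N\}$ and $k = N$ works. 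Putting the two cases together proves the lemma. I do not anticipate a genuine obstacle here; the argument is entirely elementary, the only thing worth double-checking being that disjointness of $E_k$ and $F_k$ holds unconditionally, so that it suffices to argue about the union.
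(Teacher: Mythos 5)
Your proof is correct and follows essentially the same route as the paper's: the triangle inequality together with the doubling $r_{k+1}=2r_k$ shows that anything missed by $E_k\cup F_k$ must land in $E_{k+1}$, and a pigeonhole count on the nested sets $E_1\subseteq\cdots\subseteq E_N$ forces some $k$ to work. The paper phrases this via the sets $G_k=S_N\setminus(E_k\cup F_k)$ and the inclusion $G_k\cup E_k\subset E_{k+1}$, which is exactly your dichotomy between $E_k=E_{k+1}$ and strict growth.
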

\begin{proof}
Put $S_N=\{\lambda_1,\ldots,\lambda_N\}$. It is clear that $E_k$ and
$F_k$ are disjoint and that $\lambda_1\in E_k$ for every $1\leq k \leq N$. Consider the set
$G_k=S_N\setminus (E_k\cup F_k)$ for every $1\leq k\leq N$. An
element $\lambda_j$ lies in $G_k$ if it does not belong to $E_k$ but
there exists $\mu \in E_k$ with the property that
$$
|\lambda_j-\mu|<\frac{\e}{2^{N+1-k}}.
$$
By the triangle inequality, we see that $G_k \cup E_k\subset
E_{k+1}$. If $G_k$ is non-empty for each $1\leq k\leq N-1$, this last inclusion implies that $E_{k}$ contains at least $k$ elements for each $1\leq k \leq N$, so that
$E_N=S_N$ and $G_N$ is empty, and the lemma follows.
\end{proof}

One last bit of preparation is necessary. The next fact is found in \cite{nagy1971}  (it was independently
discovered by Herrero, see \cite{herrero1972}).

\begin{theorem}\label{t-maxvector}
Let $T\in B(\hil)$ be a multiplicity-free operator of class $C_0$.
Then, the set of cyclic vectors for $T$ is a dense $G_\delta$ in
$\hil$.
\end{theorem}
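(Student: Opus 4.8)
The plan is to deduce the statement from the Baire category theorem. Since $\hil$ is a complete metric space, it suffices to show that the set of \emph{non}-cyclic vectors is a countable union of closed nowhere dense sets, for then the set of cyclic vectors is a dense $G_\delta$.

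First I would recast cyclicity in terms of invariant subspaces. Let $\theta$ be the minimal function of $T$. A vector $\xi$ is cyclic for $T$ exactly when the smallest closed $T$-invariant subspace containing $\xi$ equals $\hil$, so $\xi$ is non-cyclic if and only if it lies in some proper closed $T$-invariant subspace. By Theorem \ref{t-invsub}, the proper closed invariant subspaces of $T$ are precisely the spaces $\ker\phi(T)$ with $\phi$ an inner divisor of $\theta$ and $\phi\neq\theta$, and each such space is a proper closed subspace of $\hil$, hence nowhere dense. Moreover, if $\phi$ and $\phi'$ both divide $\theta$ and $\phi$ divides $\phi'$, then $\phi'(T)=(\phi'/\phi)(T)\,\phi(T)$ shows $\ker\phi(T)\subseteq\ker\phi'(T)$. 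So the task reduces to producing a sequence $\{\psi_n\}_{n\geq1}$ of proper inner divisors of $\theta$ such that every $\ker\phi(T)$, with $\phi\neq\theta$ a divisor of $\theta$, is contained in some $\ker\psi_n(T)$; the set of non-cyclic vectors is then contained in $\bigcup_n\ker\psi_n(T)$, a countable union of nowhere dense closed sets, which is all we need.

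Constructing $\{\psi_n\}$ splits according to the nature of the complementary divisor $\theta/\phi\neq1$. If $\theta/\phi$ has a zero $\lambda\in\D$, then $b_\lambda$ divides $\theta/\phi$, so $\phi$ divides $\theta/b_\lambda$ and $\ker\phi(T)\subseteq\ker(\theta/b_\lambda)(T)$; since an inner function has at most countably many zeros, the divisors $\theta/b_\lambda$, as $\lambda$ runs over the zeros of $\theta$, form a countable family that handles this case. The delicate case is when $\theta/\phi$ is a nonconstant \emph{singular} inner function, i.e. $\xi$ is non-cyclic because its minimal inner divisor shares singular mass with the singular part $s_\mu$ of $\theta$. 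This is the main obstacle: the naive hope of choosing a countable cofinal family among the singular divisors of $s_\mu$ fails as soon as $\mu$ has a continuous component, since the lattice of singular divisors of $s_\mu$ is then not countably generated. Instead one must show directly that the set of such $\xi$ is meager; I would do this by transferring to the Jordan model $H(\theta)$ (to which $T$ is quasisimilar) and analysing, via the growth of $|s_\nu(z)|$ near the support of the relevant measure, the vectors whose inner part carries singular mass on $\operatorname{supp}\mu$. Alternatively, one may simply invoke the argument of Sz.-Nagy \cite{nagy1971} (or Herrero \cite{herrero1972}) for precisely this point. Once the singular case is settled, combining it with the easy Blaschke case and the reduction above finishes the proof.
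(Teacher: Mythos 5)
You should first know that the paper does not prove this statement at all: Theorem \ref{t-maxvector} is quoted verbatim from Sz.-Nagy \cite{nagy1971} (and Herrero \cite{herrero1972}) with no argument given. So any genuine proof you supply would be a bonus; the question is whether yours is one, and it is not quite. Your reduction is correct as far as it goes: non-cyclic vectors are exactly the vectors lying in a proper closed invariant subspace, Theorem \ref{t-invsub} identifies these subspaces as the kernels $\ker\phi(T)$ for proper inner divisors $\phi$ of $\theta$, each such kernel is a proper closed (hence nowhere dense) subspace, and divisibility of the symbols gives containment of the kernels. The case where $\theta/\phi$ has a zero is then handled cleanly by the countable family $\{\theta/b_\lambda\}$ indexed by the zeros of $\theta$, and this already covers every application the paper actually makes of the theorem (Section 4 only uses finite Blaschke products).

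The gap is exactly where you flag it, and flagging it is not the same as closing it. When the singular part of $\theta$ carries a continuous measure $\mu$, there is indeed no countable cofinal family of proper divisors (given candidates $s_{\mu-\sigma_n}$, one can choose positive-measure sets $A_n\subset\{d\sigma_n/d\mu>0\}$ with $\sum_n\mu(A_n)<\mu(\D^c\cap\operatorname{supp}\mu)$ -- more simply, with total mass small -- and check that $\mu|_{E}$ with $E$ the complement of $\bigcup_n A_n$ dominates no $\sigma_n$), so the Baire reduction genuinely fails there. Your proposed repair is only a gesture: ``analyse the growth of $|s_\nu(z)|$ near the support'' is not an argument, and ``invoke the argument of Sz.-Nagy or Herrero for precisely this point'' is circular, since those are precisely the references from which the whole theorem is taken. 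Two further remarks. First, the transfer to the Jordan model that you mention is legitimate for \emph{density} (a quasiaffinity intertwining $S(\theta)$ with $T$ maps cyclic vectors to cyclic vectors and carries dense sets to dense sets), but you would still have to prove density of the cyclic vectors of $S(\theta)$ itself for $\theta$ with continuous singular part, which is the same difficulty. Second, in the fallback route ``show the non-cyclic vectors are meager,'' meagerness yields density of the cyclic vectors but not the $G_\delta$ structure; for that you should add the standard, purely general observation that for any bounded operator on a separable Hilbert space the cyclic vectors form the set $\bigcap_{k,m}\bigcup_{p}\{x:\|p(T)x-e_k\|<1/m\}$, where $\{e_k\}$ is a countable dense subset and $p$ ranges over polynomials with rational coefficients, hence a $G_\delta$. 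With that observation in place, the only thing your proof is missing is an actual proof of density in the singular-continuous case.
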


Finally, we come to our similarity result which improves Theorem
\ref{t-simalgeta} in the sense that it removes any restriction on
the roots of the minimal function $\theta$.

\begin{theorem}\label{t-simfiniteBP}
Let $T_1\in B(\hil_1), T_2\in B(\hil_2)$ be multiplicity-free
operators of class $C_0$ whose minimal function is a finite Blaschke product $\theta$ with $N$ roots. Assume that there exist constants $\beta',\beta$ such that
$$
\|\phi(T_1)|\ker \psi(T_1)\|>\beta'>\beta> \left(1-\frac{1}{(N-1)^2}
\right)^{1/4}
$$
and
$$
\|\phi(T_2)|\ker \psi(T_2)\|>\beta'>\beta> \left(1-\frac{1}{(N-1)^2}
\right)^{1/4}
$$
whenever $\psi$ is a non-constant inner divisor of $\theta$ and $\phi$ is a proper inner divisor of $\psi$. Then, there exists an invertible operator $X$ with the property that $XT_1=T_2X$
and
$$
\max\{ \|X\|, \|X^{-1}\|\}\leq C(N,\beta,\beta'),
$$
where $C(N,\beta,\beta')>0$ is a
constant depending only on $N$, $\beta$ and $\beta'$.
\end{theorem}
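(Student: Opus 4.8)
The plan is to induct on $N$, the number of roots of the minimal function $\theta$, the engine being a corona‐type decomposition: using Lemma~\ref{l-coronasimest} one peels off a pseudohyperbolically \emph{tight} cluster of roots, the residual problem having strictly fewer roots, so that one is eventually reduced to the case where \emph{all} roots of $\theta$ lie in a small pseudohyperbolic ball, where Theorem~\ref{t-simalg} can be applied directly. Write $d_h(z,w)=|b_z(w)|$ for the pseudohyperbolic distance; since $d_h$ is a genuine metric, the combinatorial Lemma~\ref{l-comb} and the lower bound of Lemma~\ref{l-lowerbound} carry over verbatim to $d_h$ (for Lemma~\ref{l-lowerbound} even with the cleaner constant $(r/2)^N$, because $|b_\lambda(z)|=d_h(\lambda,z)$). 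It is essential to cluster with respect to $d_h$ rather than the Euclidean metric: two roots can be Euclidean‑close while being pseudohyperbolically almost antipodal (e.g.\ $1-t$ and $1-ct$ for small $t$ and large $c$), and the transfer argument in the clustered case would then fail.

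Fix $\e>0$, depending only on $N,\beta,\beta'$, with $\e<\beta'$ and $\gamma:=\beta'(\beta'-\e)/(1+\e)>\beta^2$; note $\beta^2>(1-1/(N-1)^2)^{1/2}\ge(1-1/(m-1)^2)^{1/2}$ for every $2\le m\le N$, and $(1-1/(m-1)^2)^{1/4}\le(1-1/(N-1)^2)^{1/4}<\beta$ for $m\le N$. Given $T_1,T_2$ with minimal function $\theta$ having $N\ge 2$ roots $\lambda_1,\dots,\lambda_N$ (one‑dimensional sub‑problems, where the operators are scalars, being trivial), I would apply the pseudohyperbolic form of Lemma~\ref{l-comb} with this $\e$ to obtain a partition $\{\lambda_j\}=E\sqcup F$ with $\lambda_1\in E$, every point of $E$ at $d_h$‑distance $\le\e/2$ from $\lambda_1$, and every point of $F$ at $d_h$‑distance $\ge r\ge\e 2^{-N}$ from every point of $E$. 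Write $\theta=\theta_E\theta_F$ accordingly. If $F\ne\varnothing$ then both $\theta_E$ and $\theta_F$ have fewer than $N$ roots; the pseudohyperbolic Lemma~\ref{l-lowerbound} gives $\inf_{z\in\D}\{|\theta_E(z)|+|\theta_F(z)|\}\ge(r/2)^N$, a quantity controlled by $N$ and $\e$, so Lemma~\ref{l-coronasimest} produces invertible $X_i$ with $\|X_i^{-1}\|\le\sqrt 2$ and $\|X_i\|$ controlled, conjugating $T_i$ to $A_i\oplus B_i$ where $A_i=T_i|\ker\theta_E(T_i)$ and $B_i=T_i|\ker\theta_F(T_i)$. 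By Theorem~\ref{t-invsub} these restrictions are again multiplicity‑free of class $C_0$ with minimal functions $\theta_E,\theta_F$; and since $\ker\psi(A_i)=\ker\psi(T_i)$ and $\phi(A_i)|\ker\psi(A_i)=\phi(T_i)|\ker\psi(T_i)$ for $\psi\mid\theta_E$ (similarly for $B_i$), the pairs $(A_1,A_2)$ and $(B_1,B_2)$ satisfy the hypotheses of the theorem with the \emph{same} $\beta,\beta'$ — the threshold only decreases as the number of roots drops. Applying the induction hypothesis to each pair and reassembling $X:=X_2^{-1}(Y_A\oplus Y_B)X_1$ gives the intertwiner for $(T_1,T_2)$ with norm controlled by $N,\beta,\beta'$.

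This leaves the clustered case $F=\varnothing$, i.e.\ $\theta=\theta_E$ has all $N$ roots within $d_h$‑distance $\e/2$ of $\lambda_1$. Here I take $\psi=\theta$ in the hypothesis, obtaining $\|\phi(T_i)\|>\beta'$ for every proper inner divisor $\phi$ of $\theta$; in particular $\|\psi_0(T_i)\|>\beta'$ for $\psi_0:=\theta/b_{\lambda_1}$. Since the cyclic vectors are dense (Theorem~\ref{t-maxvector}) and $h\mapsto\|\psi_0(T_i)h\|$ is continuous, I may choose a cyclic unit vector $\xi_i$ with $\|\psi_0(T_i)\xi_i\|>\beta'$. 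I then claim $\|\phi(T_i)\xi_i\|\ge\gamma$ for every proper inner divisor $\phi$ of $\theta$: such a $\phi$ divides the big divisor $\omega:=\theta/b_\nu$ for some root $\nu$ of $\theta/\phi$, so by contractivity of the complementary factor of the functional calculus it suffices to bound $\|\omega(T_i)\xi_i\|$ below. If $\omega=\psi_0$ this equals $\|\psi_0(T_i)\xi_i\|>\beta'\ge\gamma$. Otherwise write $\theta=b_{\lambda_1}b_\nu\sigma$ and put $g=\sigma(T_i)\xi_i$, so $b_\nu(T_i)g=\psi_0(T_i)\xi_i$ has norm $>\beta'$; hence $\|g\|>\beta'$ and $\|b_\nu(T_i)g\|>\beta'\ge\beta'\|g\|$. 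Choosing $z\in\D$ with $b_z\circ b_\nu=b_{\lambda_1}$ up to a unimodular constant, so that $|z|=d_h(\nu,\lambda_1)<\e$, Lemma~\ref{l-mobius} applied to the contraction $b_\nu(T_i)$ and the vector $g$ (with $\delta=\beta'$) yields
\[
\|\omega(T_i)\xi_i\|=\|b_z(b_\nu(T_i))g\|\ge\frac{\beta'-|z|}{1+|z|}\|g\|\ge\frac{\beta'-\e}{1+\e}\,\beta'=\gamma .
\]
Since $\gamma>\beta^2\ge(1-1/(N-1)^2)^{1/2}$, Theorem~\ref{t-simalg} applies to $T_1,T_2$ with the cyclic vectors $\xi_1,\xi_2$ and the constant $\gamma$, producing an invertible intertwiner with norm bounded by $C(\gamma,N)$. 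Unwinding the recursion (each node either being a one‑dimensional base case, or terminated by Theorem~\ref{t-simalg}, or split into two strictly smaller sub‑problems), every constant that appears is a function of $N$, $\beta$ and $\beta'$ alone.

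I expect the main obstacle to be the clustered case, for two reasons. First, one must recognize that the decomposition has to be carried out in the pseudohyperbolic metric: the Euclidean versions of Lemmas~\ref{l-comb} and \ref{l-lowerbound} are of no use here, since the norm‑transfer step genuinely requires $d_h(\nu,\lambda_1)<\beta'$, which Euclidean proximity of the roots does not deliver. Second, one must see that the whole family of lower bounds demanded by Theorem~\ref{t-simalg}, one for each proper inner divisor, can be extracted from a single near‑extremal cyclic vector for one big divisor, the passage between divisors being mediated by the elementary M\"obius estimate of Lemma~\ref{l-mobius} together with the contractivity of the $H^\infty$ functional calculus. Once these two points are settled, the corona bookkeeping and the induction on $N$ are routine.
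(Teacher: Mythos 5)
Your argument has the same skeleton as the paper's proof: induction on $N$; in the ``clustered'' case, a single cyclic unit vector nearly attaining $\|(\theta/b_{\lambda_1})(T_i)\|$ whose lower bound is transferred to every big divisor by applying Lemma \ref{l-mobius} to the contraction $b_{\nu}(T_i)$, followed by Theorem \ref{t-simalg}; otherwise a corona splitting via Lemmas \ref{l-comb}, \ref{l-lowerbound} and \ref{l-coronasimest} and the induction hypothesis. The one substantive divergence is the metric in which you cluster the roots. The paper clusters in the Euclidean metric: it picks $r=r(\beta,\beta')$ so that $|\lambda_k-\lambda_N|<r$ forces $\bigl(\tfrac{\beta'-|\mu|}{1+|\mu|}\bigr)\beta'>\beta^2$ with $|\mu|=|b_{\lambda_N}(\lambda_k)|$, and feeds that same Euclidean $r$ into Lemmas \ref{l-comb} and \ref{l-lowerbound}. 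You cluster in the pseudohyperbolic metric $d_h(z,w)=|b_z(w)|$, noting that both lemmas use only the triangle inequality and the identity $|b_{\lambda}(z)|=d_h(\lambda,z)$, so they carry over unchanged. Your choice is the correct one: the quantity that must be small for the M\"obius transfer is exactly $d_h(\lambda_k,\lambda_N)$, and Euclidean proximity does not control it uniformly near the boundary (your example is right --- for $\lambda_N=1-t$, $\lambda_k=1-ct$ the pseudohyperbolic distance tends to $(c-1)/(c+1)$ while the Euclidean distance tends to $0$), so no Euclidean $r$ depending only on $\beta$ and $\beta'$ can do what the paper's proof asks of it. In short, you reproduce the paper's strategy while quietly repairing this step; the rest (reducing arbitrary proper divisors to big ones, checking that the hypotheses restrict to $T_i|\ker\theta_E(T_i)$ with the same $\beta,\beta'$, and the bookkeeping showing every constant depends only on $N,\beta,\beta'$) matches the published argument and is handled correctly.
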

\begin{proof}
Put $\theta=b_{\lambda_1}\ldots b_{\lambda_N}$.
We proceed by induction on $N$. The case $N=1$ is trivial since then the equations
$$
b_{\lambda_1}(T_1)=b_{\lambda_1}(T_2)=0
$$
imply that $T_1$ and $T_2$ are equal to the same multiple of the identity operator. Assume that the conclusion holds for
Blaschke products with at most $N-1$ roots. For each $1\leq k \leq
N$ we set $\psi_k=\theta/b_{\lambda_k}$. Since $
\|\psi_N(T_i)\|>\beta', $ by Theorem \ref{t-maxvector} we can find a
unit cyclic vector $\xi_i \in \hil$ such that $
\|\psi_N(T_i)\xi_i\|> \beta' $ for $i=1,2$. For $1\leq k <N$ we see
that
$$
\psi_k(T_i)\xi_i=b_{\lambda_N}(T_i)\left(\frac{\theta}{b_{\lambda_k}b_{\lambda_N}}\right)(T_i)\xi_i
$$
while
$$
\psi_N(T_i)\xi_i=b_{\lambda_k}(T_i)\left(\frac{\theta}{b_{\lambda_k}b_{\lambda_N}}\right)(T_i)\xi_i
$$
and thus by Lemma \ref{l-mobius} we find
\begin{align*}
\|\psi_k(T_i)\xi_i\|&=\left\|b_{\lambda_N}(T_i)\left(\frac{\theta}{b_{\lambda_k}b_{\lambda_N}}\right)(T_i)\xi_i \right\|\\
&= \left\|(b_{\mu}\circ b_{\lambda_k})(T_i)\left(\frac{\theta}{b_{\lambda_k}b_{\lambda_N}}\right)(T_i)\xi_i \right\| \\
&\geq \frac{ \beta'-|\mu|}{1+|\mu|}\left\|\left(\frac{\theta}{b_{\lambda_k}
b_{\lambda_N}}\right)(T_i)\xi_i\right\|\\
&\geq \frac{ \beta'-|\mu|}{1+|\mu|}\|\psi_N(T_i)\xi_i\|\\
&\geq \left(\frac{ \beta'-|\mu|}{1+|\mu|}\right)\beta'
\end{align*}
where $\mu=-b_{\lambda_N}(\lambda_k)$. Choose now $r>0$ such that
$$
\left(\frac{ \beta'-|\mu|}{1+|\mu|}\right)\beta'>\beta^2
$$
if $|\lambda_k-\lambda_N|<r$. Clearly, $r$ depends only on $\beta$ and $\beta'$ and if $ |\lambda_k-\lambda_N|<r, $ then $
\|\psi_k(T_i)\xi_i\|> \beta^2. $
Thus, the desired conclusion follows from
Theorem \ref{t-simalg} in case where $\sup_{1\leq k \leq
N}|\lambda_k-\lambda_N|<r$. Assume therefore that this supremum is at
least $r$. In that case, Lemma \ref{l-comb} allows us to write
$$
\{\lambda_1,\ldots,\lambda_N\}=E \cup F
$$
where $E$ and $F$ are disjoint and non-empty,
$|\lambda-\lambda_N|<r$ for every $\lambda\in E$, and
$|\lambda-\mu|>r2^{-N}$ for every $\lambda \in E, \mu\in F$. Let $\theta_E$ (respectively $\theta_F$) be the
Blaschke product associated to the elements of $E$ (respectively
$F$). By Lemmas \ref{l-coronasimest} and \ref{l-lowerbound}, for each $i=1,2$
there exists an invertible operator $Y_i$ with the property that
$$Y_iTY_i^{-1}=T_i|\ker\theta_E(T_i)\oplus T_i|\ker \theta_F(T_i)$$ and 
$$
\max\{\|Y_i\|,
\|Y_i^{-1}\|\}\leq C_1
$$
where $C_1>0$ depends only on $N$, $\beta$ and $\beta'$. Note now that the minimal function of $T_i|\ker \theta_E(T_i)$ (respectively $T_i|\ker \theta_F(T_i)$) is $\theta_E$ (respectively $\theta_F$) by virtue of Theorem \ref{t-invsub}. Since $E$ and
$F$ have cardinality strictly less than $N$, we are done by
induction.
\end{proof}

In case where one of the operators is a Jordan block, we obtain a
simpler version of the previous result by making use of another
property of Jordan blocks, found in Proposition 3.1.10 of
\cite{bercovici1988}.

\begin{lemma}\label{l-unitequiv}
Let $\phi$ be an inner divisor of the inner function $\theta$. Then,
the operator $S(\theta)|\ker \phi(S(\theta))$ is unitarily
equivalent to $S(\phi)$.
\end{lemma}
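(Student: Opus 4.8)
The plan is to construct an explicit unitary operator intertwining $S(\phi)$ with the restriction $S(\theta)|\ker\phi(S(\theta))$. Recall that $S(\theta)$ acts on $H(\theta)=H^2\ominus\theta H^2$ and that its functional calculus satisfies $\phi(S(\theta))=P_{H(\theta)}M_{\phi}|H(\theta)$, where $M_u$ denotes multiplication by $u$ on $H^2$.

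The first step is to identify the subspace $\ker\phi(S(\theta))$ concretely. For $f\in H(\theta)$ one has $\phi(S(\theta))f=0$ if and only if $\phi f\in\theta H^2$, that is, if and only if $f\in(\theta/\phi)H^2$; combining this with $f\perp\theta H^2$ yields $\ker\phi(S(\theta))=(\theta/\phi)H(\phi)$. Here one uses that $H(\phi)\subseteq H(\theta)$, which holds because $\theta H^2\subseteq\phi H^2$, and that $(\theta/\phi)H(\phi)\perp\theta H^2$. Alternatively, the same identification can be deduced from Theorem \ref{t-invsub}. In particular the subspace $(\theta/\phi)H(\phi)$ is already closed, being the image of a closed subspace under the isometry $M_{\theta/\phi}$.

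The second step is to set $W\colon H(\phi)\to\ker\phi(S(\theta))$, $Wg=(\theta/\phi)g$. By the first step $W$ is a surjective isometry, hence unitary. The third step is to check the intertwining relation. Given $g\in H(\phi)$, decompose $zg\in H^2$ along $H^2=H(\phi)\oplus\phi H^2$ to write $zg=S(\phi)g+\phi k$ with $k\in H^2$; multiplying by $\theta/\phi$ gives $z(\theta/\phi)g=(\theta/\phi)S(\phi)g+\theta k$, in which the first term lies in $(\theta/\phi)H(\phi)\subseteq H(\theta)$ and the second lies in $\theta H^2=H(\theta)^{\perp}$. Applying $P_{H(\theta)}$ therefore yields $(S(\theta)|\ker\phi(S(\theta)))Wg=(\theta/\phi)S(\phi)g=WS(\phi)g$, which is the desired relation.

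No step here is genuinely difficult; the only point demanding a moment's attention is the concrete identification of $\ker\phi(S(\theta))$ in the first step, together with the orthogonality checks $H(\phi)\subseteq H(\theta)$ and $(\theta/\phi)H(\phi)\perp\theta H^2$, both of which are routine.
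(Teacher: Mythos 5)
Your proof is correct, and all the key points (the identification $\ker\phi(S(\theta))=(\theta/\phi)H(\phi)$, the unitarity of $W=M_{\theta/\phi}$, and the intertwining computation) check out. The paper itself offers no proof, simply citing Proposition 3.1.10 of \cite{bercovici1988}; your argument is essentially the standard one given there.
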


\begin{corollary}\label{c-simfiniteBP}
Let $T\in B(\hil)$ be a multiplicity-free operator of class $C_0$ whose minimal function is a finite Blaschke product $\theta$ with at most $N$ roots. Assume that there exist constants $\beta,\beta'$ such that
$$
\|\phi(T)|\ker \psi(T)\|> \beta'>\beta> \left(1-\frac{1}{(N-1)^2}
\right)^{1/4}
$$
whenever $\psi$ is an inner divisor of $\theta$ and $\phi$ is an inner divisor of $\psi$. Then, there exists an invertible operator $X$ with the property that $XT=S(\theta)X$
and
$$
\max\{ \|X\|, \|X^{-1}\|\}\leq C(N,\beta,\beta'),
$$
where $C(N,\beta,\beta')>0$ is a
constant depending only on $N$,$\beta$ and $\beta'$.
\end{corollary}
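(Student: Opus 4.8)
The plan is to deduce the corollary directly from Theorem \ref{t-simfiniteBP}, applied with $T_1 = T$ and $T_2 = S(\theta)$. I would first dispose of the trivial cases: nothing is needed if $\theta$ is constant, and if $N = 1$ then $b_{\lambda_1}(T) = 0$ forces $T$ and $S(\theta)$ to be the same scalar, so I may assume $N \geq 2$; moreover, since the threshold appearing in Theorem \ref{t-simfiniteBP} is non-decreasing in $N$ and only the constant $C$ is affected, I may as well assume $\theta$ has exactly $N$ roots.

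Next I would observe that for $T_1 = T$ the hypothesis of Theorem \ref{t-simfiniteBP} is immediate: it only requires $\|\phi(T)|\ker\psi(T)\| > \beta'$ when $\psi$ is a non-constant inner divisor of $\theta$ and $\phi$ a proper inner divisor of $\psi$, which is a subcase of the assumption made in the corollary. So the heart of the matter is to check the \emph{same} lower bound, with the \emph{same} $\beta'$, for the Jordan block $T_2 = S(\theta)$; that is, to show that Jordan blocks automatically satisfy the hypothesis of the similarity theorem.

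To carry this out, I would fix a non-constant inner divisor $\psi$ of $\theta$ and a proper inner divisor $\phi$ of $\psi$. The subspace $\ker\psi(S(\theta))$ is invariant for $\phi(S(\theta))$ because the two operators commute, and by Lemma \ref{l-unitequiv} the restriction $S(\theta)|\ker\psi(S(\theta))$ is unitarily equivalent to $S(\psi)$; transporting this unitary equivalence through the weak-star continuous $H^\infty$ functional calculus yields
$$
\|\phi(S(\theta))|\ker\psi(S(\theta))\| = \|\phi(S(\psi))\|.
$$
Since $\phi$ is a proper divisor of $\psi$, the quotient $\omega = \psi/\phi$ is a non-constant finite Blaschke product, hence has a zero $\lambda \in \D$. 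For every $f \in H^\infty$ I can write $\phi + \psi f = \phi(1 + \omega f)$, and as $\phi$ is inner this gives $\|\phi + \psi f\|_{H^\infty} = \|1 + \omega f\|_{H^\infty} \geq |1 + \omega(\lambda)f(\lambda)| = 1$; taking $f = 0$ shows the bound is sharp, so Theorem \ref{t-jbcomm} gives $\|\phi(S(\psi))\| = 1$. On the other hand, applying the hypothesis of the corollary with $\psi = \theta$ and $\phi = 1$ gives $1 = \|I\| > \beta'$, so $\|\phi(S(\theta))|\ker\psi(S(\theta))\| = 1 > \beta'$, which is exactly what Theorem \ref{t-simfiniteBP} requires of $T_2 = S(\theta)$.

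With the hypotheses verified for both operators, Theorem \ref{t-simfiniteBP} produces the desired invertible $X$ with $XT = S(\theta)X$ and $\max\{\|X\|, \|X^{-1}\|\} \leq C(N, \beta, \beta')$. I do not anticipate a genuine obstacle: the only point worth a careful sentence is the transfer of the $H^\infty$ functional calculus across the unitary equivalence of Lemma \ref{l-unitequiv} --- this uses weak-star continuity of the calculus together with weak-star density of the polynomials --- and the remark that properness of the divisor makes $H(\psi/\phi) \neq 0$, so that the relevant $H^\infty$ distance is exactly $1$ rather than something smaller.
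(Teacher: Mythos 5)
Your argument is correct and is precisely the paper's intended proof: the paper's own justification is the one-line citation of Lemma \ref{l-unitequiv}, Theorem \ref{t-jbcomm} and Theorem \ref{t-simfiniteBP}, and you have simply supplied the details --- applying the similarity theorem with $T_2=S(\theta)$, using Lemma \ref{l-unitequiv} to reduce $\|\phi(S(\theta))|\ker\psi(S(\theta))\|$ to $\|\phi(S(\psi))\|$, and computing the latter to be $1>\beta'$ via Theorem \ref{t-jbcomm}. The small reductions you make (to exactly $N$ roots, and extracting $\beta'<1$ from the hypothesis with $\psi=\theta$, $\phi=1$) are sound.
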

\begin{proof}
This follows directly from Lemma \ref{l-unitequiv}, Theorem \ref{t-jbcomm} and Theorem \ref{t-simfiniteBP}.
\end{proof}

As was done in \cite{clouatre2013}, this corollary can be applied to obtain similarity results for some infinite Blaschke products. Pursuing those applications here would lead us outside of the intented scope of the paper, so let us simply mention that the proofs follow the same lines as those from \cite{clouatre2013}.

%\bibliography{biblio}
\bibliography{/home/raphael/Dropbox/Research/biblio}
\bibliographystyle{plain}

\end{document}